\title[]{Lazy random walks and optimal transport on graphs}
\author{Christian L\'eonard}
\newtheorem{theorem}[equation]{Theorem}
\newtheorem{lemma}[equation]{Lemma}
\newtheorem{proposition}[equation]{Proposition}
\newtheorem{corollary}[equation]{Corollary}
\newtheorem{claim}[equation]{Claim}
\newtheorem{definition}[equation]{Definition}
\newtheorem{definitions}[equation]{Definitions}
\newtheorem{hypotheses}[equation]{Hypotheses}
\theoremstyle{remark}
\newtheorem{remark}[equation]{Remark}
\numberwithin{equation}{section}
\newcommand{\RR}{\mathbb{R}}
\newcommand{\1}{\mathbf{1}}
\newcommand\pf{_{\#}}
\newcommand{\as}{\textrm{-a.s.}}
\renewcommand{\ae}{\textrm{-a.e.}}
\newcommand{\scal}{\!\cdot\!}
    \DeclareMathOperator{\supp}{supp}
    \DeclareMathOperator{\argmin}{argmin}
	\DeclareMathOperator{\esssup}{-ess\, sup}
\newcommand{\boulette}[1]{$\bullet$\ Proof of #1.}
\newcommand{\Boulette}[1]{\par\medskip\noindent $\bullet$\ Proof of #1.}
\newcommand\seq[2]{(#1_#2)_{#2\ge1}}
\newcommand\Lim[1]{\lim_{#1\rightarrow\infty}}
\newcommand\Liminf[1]{\liminf_{#1\rightarrow\infty}}
\newcommand\Glim[1]{\Gamma\textrm{-}\lim_{#1\rightarrow\infty}}
\newcommand\lime{\lim_{\epsilon\rightarrow0}}
\newcommand\XX{\mathcal{X}}
\newcommand\XXX{\XX^2}
\newcommand\PX{\mathrm{P}(\XX)}
\newcommand\PdX{\mathrm{P}_2(\XX)}
\newcommand\PuX{\mathrm{P}_1(\XX)}
\newcommand\PXX{\mathrm{P}(\XXX)}
\newcommand\MX{\mathrm{M}_+(\XX)}
\newcommand\PY{\mathrm{P}(Y)}
\newcommand\PO{\mathrm{P}(\Omega)}
\newcommand\MO{\mathrm{M}_+(\Omega)}
\newcommand\OO{\Omega}
\newcommand\Oac{\Omega _{\textrm{ac,2}}}
\newcommand\OOt{\widetilde\Omega}
\newcommand\ii{{[0,1]}}
\newcommand\iX{{[0,1]\times\XX}}
\newcommand\IX{\int_{\XX}}
\newcommand\IXX{\int_{\XXX}}
\newcommand\IO{\int_\Omega}
\newcommand\Iii{\int_\ii}
\newcommand\IiX{\int_\iX}
\newcommand{\ZZ}{\mathcal{Z}}
\newcommand\Ph{\widehat{P}}
\newcommand\Rt{G}  
\newcommand{\Jty}{J ^{G,y}}
\newcommand\Jh{\widehat{J}}
\newcommand{\Ltxy}{L ^{G,xy}}
\newcommand{\Lty}{L ^{G,y}}
\newcommand\ph{\widehat{\pi}}
\newcommand{\JJ}{j}
\newcommand{\CC}{C _{\mathrm{kin}}}
\newcommand{\GG}{G}
\newcommand{\Gxy}{\Gamma ^{xy}}
\newcommand{\SMK}{\mathcal{S}_{ \mathrm{MK}}}
\newcommand{\sy}{\sum _{y:y\sim x}}
\newcommand{\sx}{\sum _{x\in\XX}}
\newcommand{\st}{\sum _{0< t< 1}}
\newcommand{\qq}{\mathbf{q}}
\newcommand{\vv}{\mathbf{v}}
\begin{document}


 \address{Modal-X. Universit\'e Paris Ouest. B\^at.\! G, 200 av. de la R\'epublique. 92001 Nanterre, France}
 \email{christian.leonard@u-paris10.fr}
 \keywords{Displacement interpolation, discrete metric graph, optimal transport, Schr\"odinger problem, random walks, entropy minimization, $\Gamma$-convergence.}
 \subjclass[2010]{60J27, 65K10}
\thanks{Author partially supported by the ANR project GeMeCoD. ANR 2011 BS01 007 01}

\begin{abstract} 
This paper is about the construction of  displacement interpolations on a discrete  metric graph. Our approach is based on the approximation of any optimal transport problem whose cost function is a distance on a discrete graph by a sequence of Schr\"odinger problems associated with random walks whose jump frequencies tend down to zero. Displacement interpolations are  defined as the limit of the time-marginal flows of the solutions to the Schr\"odinger problems.
This allows to work with these interpolations by doing stochastic calculus on the approximating random walks which are regular objects, and then to pass to the limit in a slowing  down procedure. 
The main convergence results are based on $\Gamma$-convergence of entropy minimization problems.
\\
 As a by-product, we obtain new results about  optimal transport on graphs. 
\end{abstract}

\maketitle 
\tableofcontents


\section*{Introduction}

Displacement interpolations on $\RR^n$ were introduced by McCann in \cite{McC94} and  extended later to a geodesic space $(\XX,d)$ where  they are defined as minimizing geodesics on the space of  all probability measures on $\XX$ equipped with the Wasserstein pseudo-distance of order two. They appeared to be a basic and essential notion of the Lott-Sturm-Villani theory  of lower bounded curvature of geodesic spaces, see \cite{LV09,St06a,St06b,Vill09}. Indeed, as discovered by McCann \cite{McC94,McC97}, Otto, Villani \cite{OV00}, Cordero-Erausquin, McCann, Schmuckenschl\"ager \cite{CMS01}, Sturm and von Renesse  \cite{SvR05} in the Riemannian setting, lower bounded curvature is intimately linked to convexity properties of the relative entropy with respect to the volume measure along  displacement interpolations. It happens that these convexity properties admit natural analogues on a geodesic space.

It is tempting to try to implement a similar approach in a discrete setting. But, 
little is known in this case since a discrete space  fails to be a length space. Indeed, any regular enough path on a discrete space is piecewise constant with instantaneous jumps,  so that no speed  and a fortiori no constant speed geodesic exist.

This paper is about the construction of  displacement interpolations on a discrete\footnote{The epithet \emph{discrete} is important since the standard definition of a (non-discrete) metric graph  allows for continuous mass transfer along the edges. In the present paper, only pure jumps occur.} metric graph. It permits us to propose, in this discrete setting,  natural substitutes for the constant speed geodesics.  As a by-product of our approach, we also obtain new results about the optimal transport on a graph.

To recover some time regularity of the paths without allowing any mass transfer along the edges of a graph, one is enforced to do some averaging on  ensembles of  discontinuous sample paths. This means that, for defining instantaneous speeds and accelerations, one is obliged to consider expected values of random walks. So doing, one lifts the  paths on the discrete state space $\XX$ up to  the continuous space $\PX$ of probability  measures on $\XX.$ 
This lifting from $\XX$ to $\PX$ has already been successfully used in the Lott-Sturm-Villani theory where minimizing geodesics on the length space $\XX$ are embedded in the set of all minimizing geodesics on the Wasserstein space of order two $(\PX,W_2)$, i.e.\ $W_2$-displacement interpolations. As usual, the pseudo-distance $W_2$ is defined as the square root of the optimal value of a transport problem with cost function  $d^2,$ the square of a distance $d$ on $\XX$.

The author already proposed in \cite{Leo12a} a construction of the $W_2$-displacement interpolations in $\RR^n$  as limits of solutions of Schr\"odinger problems when the reference processes are slowed down to a no-motion process. In the present paper, we stay as close as possible to this strategy.   It will lead us to a natural notion of $W_1$-displacement interpolation on a discrete metric graph.
 The main idea lies in the following thought experiment.

\emph{The cold gas  experiment}.
Suppose you observe at time $t=0$ a large collection of particles that are distributed with a profile close to the probability measure $\mu_0\in\PX$ on the state space $\XX.$ As in the thought  experiment proposed  by Schr\"odinger in 1931 \cite{Sch31,Sch32} or in its close variant  described in Villani's textbook \cite[\emph{Lazy gas experiment}, p.\,445]{Vill09}, ask them to rearrange into a new profile close to some $\mu_1\in\PX$ at some later time $t=1.$
\\
 Suppose that the particles are in contact with a heat bath. Since they are able to create mutual optimality (Gibbs conditioning principle), they  find an optimal transference plan between the endpoint profiles $\mu_0$ and $\mu_1.$ Now, suppose in addition that the  typical speed of these particles is close to zero: the particles are  lazy, or equivalently the heat bath is pretty cold. As each particle  decides to travel at the lowest possible cost,  it chooses an almost \emph{geodesic} path. Indeed, with a very high probability each particle is very slow, so that it is typically expected that its final position is close to its initial one. But it is required by the optimal transference plan  that it must reach a distant final position.  Hence, \emph{conditionally on the event that the target $\mu_1$ is finally attained}, each particle follows an almost geodesic path with a high probability. At the limit where the heat bath vanishes (zero temperature), each particle follows a geodesic while the whole system obeys the performs some optimal transference plan. This absolutely cold gas experiment is called the \emph{lazy gas experiment} in \cite{Vill09} where its dynamics is related to displacement interpolations. For further detail with graphical illustrations, see \cite[\S 6]{Leo12d}

With this thought experiment in mind,  one can guess that a slowing down procedure enforces  the appearance of individual geodesics.  A displacement interpolation is a  mixture of these geodesics which is specified by some optimal transport plan between the endpoint profiles $\mu_0$ and $\mu_1.$ We will see that displacement interpolations on a discrete metric graph $(\XX,\sim,d)$   are minimizing geodesics  on the Wasserstein space  of order one $(\PX,W_1)$ where the pseudo-distance $W_1$ on $\PX$ is defined as the optimal value of a transport problem with  the distance $d$  as its cost function.

\subsection*{Notation} Before going on we need some general notation.
We denote by $\PY$ and $\mathrm{M}_+(Y)$ the sets of all probability and positive measures on a measurable set $Y.$ The push-forward of a measure $ \alpha\in \mathrm{M}_+(Y_1)$ by the measurable mapping $f:Y_1\to Y_2$ is $f\pf \alpha (\cdot):= \alpha(f ^{ -1}(\cdot))\in \mathrm{M}_+(Y_2).$

Let $\OO\subset \XX ^{\ii}$  be a set of paths from the time interval $\ii$ to the measurable  state space $\XX.$  The canonical process  $X=(X_t)_{0\le t\le 1}$    
is defined for all   $\omega=(\omega_s)_{0\le s\le 1}\in \OO$ by $X_t(\omega)=\omega_t\in\XX$ for each $0\le t\le 1$. The set $\OO$ is endowed with the  $\sigma$-field generated by $(X_t;t\in\ii).$ 
For any $t\in\ii$ and any $Q\in \mathrm{M}_+(\OO),$ the push-forward 
\begin{equation}\label{eq-89}
Q_t:=(X_t)\pf Q\in\MX
\end{equation} 
of $Q$ by the measurable mapping $X_t$ is the law of the random position $X_t$ at time $t$ if $Q$ describes the   behaviour of the random path. More specifically $Q_0$, $Q_1$ are the initial and final time-marginal projections of $Q.$ Also
\begin{equation}\label{eq-80}
Q _{01}:=(X_0,X_1)\pf Q\in \mathrm{M}_+(\XXX)
\end{equation}
is the joint law of the random endpoint position $(X_0,X_1).$ The $xy$-bridge of $Q$ is the conditional probability measure
\begin{equation}\label{eq-81}
Q ^{xy}:=Q(\cdot|X_0=x,X_1=y)\in\PO,\quad x,y\in\XX.
\end{equation}
 As a general result, we have the disintegration formula
\begin{equation*}
Q(\cdot)=\IXX Q ^{xy}(\cdot)\, Q _{01}(dxdy)\in\MO.
\end{equation*}
If $P$ is a probability measure on $\OO,$ we sometimes use the probabilistic convention: $E_P(u):=\int _{\OO}u\, dP.$

\subsection*{Implementing the slowing down procedure}

As a consequence of the large deviation theory, 
the mathematical translation of the lazy gas experiment is in terms of some entropy minimization problems: the Schr\"odinger problems. More precisely, we are going to investigate the limit as a slowing down parameter $k$ tends to infinity (for instance, the average number of jumps of each individual particle is of order $1/k$) of the sequence of minimizing problems
\begin{equation}\label{eq-99}
H(P| R^k)\rightarrow \textrm{min};\qquad P \in\PO: P_0=\mu_0, P_1=\mu_1,
\end{equation}
where the relative entropy of a probability measure $p$ with respect to a reference measure $r$ is defined by $H(p|r):=\int \log(dp/dr)\,dp,$  $R^k\in\MO$  describes the random behaviour of a slow walker (a single typical particle) on the graph and $P$ is the unknown path probability measure which is subject to have initial and final prescribed marginal measures: $P_0=\mu_0$ and $P_1= \mu_1\in\PX,$ see notation \eqref{eq-89}.  For more detail about Schr\"o
dinger's problem, see the author's survey paper \cite{Leo12e}.
\\
By means of $\Gamma$-convergence, we  prove that the unique minimizer $\Ph^k$ of the entropy problem \eqref{eq-99} admits a limit 
$$
\Ph:=\lim_{k\to\infty}\Ph^k\in\PO
$$
that is characterized as a specific solution of a new auxiliary variational problem. It is the law of a random walk whose sample paths are piecewise constant geodesics with respect to a given distance $d$ on the graph $(\XX,\sim)$. Moreover, the joint law $\Ph_{01}\in\PXX$, see notation \eqref{eq-80}, of the couple of its endpoint positions  under $\Ph$ is a singled out solution of the Monge-Kantorovich optimal transport problem
\begin{equation}\label{eq-100}
\IXX d(x,y)\,\pi(dxdy)\rightarrow \textrm{min};\qquad\pi \in\PXX, \pi_0=\mu_0,\pi_1=\mu_1,
\end{equation}
where $\pi_0$ and $\pi_1\in\PX$ are the first and second marginals of $\pi.$ The optimal value of this problem is the Wasserstein distance of order one
$
W_1(\mu_0,\mu_1).$ 

This specific random walk $\Ph$ is a geodesic bridge between $\mu_0$ and $\mu_1.$ We  call it a \emph{displacement  random walk} and its time-marginal flow $(\Ph_t)_{0\le t\le1}\in\PX ^{\ii}$, see notation \eqref{eq-89}, defines a \emph{displacement interpolation} between $\mu_0$ and $\mu_1.$ This definition is justified because of several analogies with the standard displacement interpolations on a geodesic space.

\subsection*{Approximating \eqref{eq-100} by means of $ \Gamma$-asymptotic expansions}

Let us comment a little on the approximation of \eqref{eq-100} by \eqref{eq-99} as $k$ tends to infinity. Instead of the discrete set of vertices $\XX,$ let us first consider  the analogue of \eqref{eq-100} on $\XX=\RR^k$ which is related to Monge's problem: 
\begin{equation}\label{eq-101}
\int _{ \RR^k} d(x, T(x))\, \mu_0(dx)\rightarrow \textrm{min};\qquad T: \RR^k\to \RR^k, T\pf\mu_0=\mu_1,
\end{equation}
where the transport map $T$ is assumed to be measurable. The Monge-Kantorovich problem \eqref{eq-100} is a convex relaxation of \eqref{eq-101} in the sense that $\pi\mapsto \IXX d(x,y)\,\pi(dxdy)$ is a convex function on the convex subset $ \left\{\pi \in\PXX; \pi_0=\mu_0,\pi_1=\mu_1\right\} $ and $\pi^T:=(\mathrm{Id},T)\pf\mu_0$ gives $\IXX d(x,y)\,\pi^T(dxdy)=\IX d(x, T(x))\, \mu_0(dx).$

\subsubsection*{Many known  solutions of \eqref{eq-101}  rely on  approximations and variational methods}

Monge's original problem corresponds  to $d$ the standard Euclidean distance.  Sudakov   proposed an efficient, but still incomplete, strategy in \cite{Su79}.  The first complete solution was obtained by Evans and Gangbo in \cite{EG99}. It states that when $ \mu_0$ is absolutely continuous and $ \mu_0,\mu_1$ have finite first moments (plus some restrictions on $ \mu_0,\mu_1$), \eqref{eq-101} admits a unique solution. Its proof is based on PDE arguments and an approximation of the ``affine'' cost $d(x,y)=\|y-x\|$ by the ``strictly convex'' costs $d^ \epsilon(x,y):=\|y-x\| ^{ 1+\epsilon},$ with $ \epsilon>0$ tending to zero, which entails a convergence of the corresponding Monge-Kantorovich problems. A natural generalization of Monge's original problem is obtained by replacing the Euclidean norm by any  norm $\|\cdot\|$  on $\RR^k.$ With alternate approaches, but still taking advantage of the approximation $d ^{ \epsilon}\to d,$  
  Caffarelli
, Feldman and  McCann in \cite{CFM02} and Ambrosio and Pratelli   in \cite{Am03,AP03}, removed \cite{EG99}'s restrictions and extended this existence and uniqueness result to the case where the norm $\|\cdot\|$  is assumed to be strictly convex. Later, 
 Ambrosio, Kirchheim and Pratelli  \cite{AKP04} succeeded in the more difficult case where the norm is crystalline.  In the general case without any restriction on the norm, the solution has recently been obtained by Champion and De Pascale in \cite{CDP11}. Again, both \cite{AKP04} and \cite{CDP11} rely on variational methods and  $ \Gamma$- convergence. 
 
 The main $\Gamma$-convergence technic used during  the proofs of \cite{AP03,AKP04,CDP11} is an asymptotic expansion which was introduced by Anzellotti and Baldo \cite{AB93}, see also \cite{Att96}. In the present paper, we also make a crucial use of this technique at Lemma \ref{res-04}.  Instead of considering the approximation $d ^{ \epsilon}\to d,$ the not convex enough problem \eqref{eq-100} is approximated by the sequence of \emph{strictly} convex entropy minimization problems \eqref{eq-99} up to some normalization, see  \eqref{Skdyn} at the end of Section \ref{sec-analogy}.

\subsection*{A discrete metric measure graph $(\XX,\sim,d,m)$}

Let us take a distance $d$ on $\XX$ which is compatible with the graph structure $\sim$, in the sense of Hypothesis \ref{hyp-00}-($d$), and a possibly unbounded positive measure $m$ on $\XX.$
It will be shown that the sequence of reference Markov random walks $(R^k) _{k\ge1}$ can be chosen  such that for any $0\le t\le 1$ and any $k\ge1,$ the  $t$-marginal $R^k_t$ is equal to $m$, the Markov generator $L^k_t$ is self-adjoint in $L^2(\XX,m)$ and the limiting displacement interpolation 
$$
\mu_t:=\lim _{k\to \infty}\Ph_t^k\in\PX, \quad 0\le t\le1
$$ 
is a minimizing constant speed geodesic in $(\PX,W_1)$ where the Wasserstein distance $W_1$ is built upon the distance $d$. 

Although what follows is not treated in the present paper, it seems plausible that, in the perspective of deriving displacement convexity properties of the entropy, it is worthwhile studying the functions $t\in\ii\mapsto H\big((X_t)\pf\Ph^k|(X_t)\pf R^k\big)=H(\Ph^k_t|m)$ since the dynamics of $R^k$ and $\Ph^k$ are well understood, see  \cite{Leo12e}, and $\lim _{k\to \infty} H(\Ph^k_t|m)= H(\mu_t|m).$ This will be explored elsewhere.

\subsection*{Alternate approaches for deriving displacement convexity in a discrete setting}

Let us write a few words about already existing strategies in view of deriving  displacement convexity properties of the entropy on a discrete space.

\subsubsection*{Midpoint interpolations}

In the special case of the hypercube $\XX=\left\{0,1\right\}^ n$ equipped with the Hamming distance, Ollivier and Villani \cite{OV12} have introduced the most natural interpolation $(\mu_t)_{0\le t\le1}$  between $\delta_x$ and $\delta_y$ which is defined  as follows. For any $0\le t\le1,$ $\mu_t$ is the uniform probability measure  on the set of all $t$-midpoints of $x$ and $y.$ In this paper, the authors announce that it  seems to be difficult to obtain displacement convexity of the entropy along these interpolations. However, they prove a Brunn-Minkowski  inequality for the counting measure under the restriction  $t=1/2$ which allows for explicit combinatoric computations.

\subsubsection*{Approximate midpoint interpolations}

Bonciocat and Sturm \cite{BSt09} have introduced $h$-approximate midpoint interpolations and a natural extension of the lower bounded curvature of geodesic spaces, again in terms of displacement convexity of the relative entropy. They show that the  $h$-discretized graph of a geodesic space with a curvature  lower bound $K$ has a lower $h$-curvature bound which converges to $K$ as the discretization mesh $h$ tends to zero. They also compute $h$-curvature lower bounds of some planar graphs.

\subsubsection*{Maas-Mielke gradient flow}

Recently, Maas \cite{Maas11} and Mielke \cite{Mie11}  designed a new distance  $\mathcal{W}$ on $\PX$  which, unlike $W_2$, allows for regarding evolution equations of reversible Markov chains on the discrete space $\XX$ as gradient flows of the entropy $H(\cdot|m)$ on $(\PX, \mathcal{W}),$ where $m$ is the reversing measure of the Markov chain. Soon after, following  the Lott-Sturm-Villani strategy, Maas and  Erbar \cite{EM11}  and Mielke \cite{Mie13}    applied this gradient flow approach to obtain convexity properties of relative entropies in $(\PX,\mathcal{W})$ leading to new interesting results about lower bounded curvature on discrete spaces. 
\\
The distance $\mathcal{W}$ is a Riemannian distance   that is obtained by plugging discrete objects, such  as a discrete gradient and a discrete divergence, into the standard Benamou-Brenier formula, see \eqref{eq-77}. It is not clear that $\mathcal{W}$ and the corresponding minimizing geodesics on $(\PX, \mathcal{W})$ are associated  with an optimal transport problem related to some distance $d$ on $\XX$. Hence, although this approach is respectful of the measure space structure $(\XX,m)$, it might not be  linked to a \emph{metric}  structure $(\XX,d).$ This still needs to be clarified if, starting  from this Riemannian structure on $\PX$, one wishes to define a notion of Ricci type curvature on $\XX$ (which should be related to the variation of the discrete volume $m$ along ``geodesics'' on $(\XX,d)$). 

\subsubsection*{Entropic interpolations} Recently, the author \cite{Leo12d} studied convexity properties of the relative entropy along \emph{entropic} interpolations, i.e.\  time marginal flow of the solution of the entropy minimization problem \eqref{eq-99} without slowing down $(k=1).$

\subsubsection*{Binomial interpolations}

Consider the set $\left\{0,\dots,n\right\} $ with the graph structure of $\mathbb{Z}.$ The binomial interpolation between $\delta_0$ and $\delta_n$ is defined for any $0\le t\le1$ and $x\in \left\{0,\dots,n\right\} $ by $\mu_t(x)=\begin{pmatrix}
n\\x
\end{pmatrix}t^x (1-t)^{n-x}.$ It means that $\mu_t=\mathcal{B}(n,t)$ is the binomial distribution with parameters $n$ and $t$. This interpolation was introduced by Johnson \cite{Joh07} on $\mathbb{N}$ to obtain displacement convexity of the entropy with motivations slightly different from the Lott-Sturm-Villani theory. Now, on a graph equipped with the standard graph distance, a binomial interpolation between $\delta_x$ and $\delta_y$ is a mixture of the binomial interpolations along the geodesic chains $x=x_0\sim x_1\sim \cdots\sim x _{d(x,y)}=y$  that connect $x$ and $y$. These interpolations allow for displacement convexity of the  entropy with respect to the counting measure $m$. It has been successfully used to prove displacement convexity of the entropy  by Hillion \cite{Hil,Hil10,Hil12}   on some trees and  Gozlan, Roberto, Samson and Tetali \cite{GRST12} which have worked out the examples of the complete graph, $\mathbb{Z}^n$ and the hypercube. 
\\
It happens that binomial interpolations are specific instances of the displacement interpolations which are built in the present paper. They correspond to the simple random walk $R$ and the volume measure $m^o$, see \eqref{eq-98a} and \eqref{eq-98b} at the appendix, with the standard graph distance.
It will be seen at Claim \ref{res-45} that they are closely related to bridges of the Poisson process.

\subsection*{Outline of the paper}

Section \ref{sec-analogy} is a continuation of this introduction where we briefly present  the analogies between  usual displacement interpolations on a Riemannian manifold and displacement interpolations on a graph.
The  results are  stated at Sections \ref{sec-dRW} and \ref{sec-di}. Their proofs are done in the last Sections \ref{sec-convergence}, \ref{sec-dyn} and \ref{sec-conservation}.

Section \ref{sec-dRW} is devoted to the displacement random walks: Theorem \ref{res-0a} gives the  $\Gamma$-convergence results with in particular $\lim_{k\to\infty}\Ph^k=\Ph,$ and Theorems \ref{res-0b} and \ref{res-0c} describe the Markov dynamics of $\Ph.$
In Section \ref{sec-di}, the  results about displacement interpolations are stated. This section also  includes the proof of a Benamou-Brenier type formula at Theorem \ref{res-0e} and a discussion about constant speed interpolations and natural substitutes for the geodesics on a graph.  Theorem \ref{res-0f} is a  statement about the conservation of average rate of mass displacement. 
The Schr\"odinger problems are  introduced at Section \ref{sec-Sch} where a set of assumptions for the existence of their solutions is also discussed.
The $\Gamma$-convergence of the sequence of slowed down Schr\"odinger problems to the optimal transport problem of order one is studied  at Section \ref{sec-convergence}. The proofs rely on  Girsanov's formula for the Radon-Nykodim density $dR^k/dR.$
The dynamics of the limit $\Ph$ is worked out at Section \ref{sec-dyn}; some effort is needed to show that $\Ph$ is Markov. Finally, the conservation of the average mass displacement along interpolations is proved at  last Section \ref{sec-conservation}.

Basic information about random walks and  relative entropy with respect to an unbounded measure is provided at  the appendix sections A and B.

\section{Defining displacement interpolations on a graph by analogy}\label{sec-analogy}

To stress the  analogies between displacement interpolations in  discrete and continuous settings, we first  recall their main properties on a Riemannian manifold. Then, we briefly introduce the main properties of an object in a discrete setting, see \eqref{eq-76b}, which will be defined as a displacement interpolation because of the strong analogies between its properties and the corresponding properties of the displacement interpolations in a continuous setting, see Definitions \ref{def-03}.

\subsection*{McCann displacement interpolations}

The quadratic Monge-Kantorovich optimal transport problem on the metric space $(\XX,d)$ associated with the pair of prescribed probability measures $\mu_0,\mu_1\in\PX$ is 
\begin{equation}\label{eq-74}
\IXX \frac{1}{2} d^2(x,y)\,\pi(dxdy)\rightarrow \textrm{min};\qquad \pi\in\PXX: \pi_0=\mu_0,\pi_1=\mu_1
\tag{MK$_2$}
\end{equation}
where  we  denote $\pi_0(dx):=\pi(dx\times\XX)$ and $\pi_1(dy):=\pi(\XX\times dy)$ the marginals of $\pi\in\PXX.$ The Wasserstein pseudo-distance of order 2 is defined by
\begin{equation*}
W_2(\mu_0,\mu_1):=\sqrt{2\inf \eqref{eq-74}}
\end{equation*}
where $\inf \eqref{eq-74}$ is the value of the minimization problem \eqref{eq-74}. As usual, we denote 
\begin{equation*}
\PdX:=\left\{\mu\in\PX; \IX d^2(x_o,x)\,\mu(dx)<\infty\right\} 
\end{equation*}
for some $x_o\in\XX,$ so that $W_2$ is a distance on $\PdX.$
A displacement interpolation  is a  minimizing geodesic $[\mu_0,\mu_1]:=(\mu_t)_{t\in\ii}$ on $(\PdX,W_2)$ joining $\mu_0$ and $\mu_1$ in $\PdX$, i.e. 
\begin{equation*}
W_2(\mu_s,\mu_t)=|t-s|\ W_2(\mu_0,\mu_1),\quad s,t\in\ii.
\end{equation*}
On a Riemannian manifold $\XX$ equipped with its Riemannian distance $d$, it  appears that it is also an action minimizing geodesic in the following sense. Let  $\Oac$ be the space of all absolutely  continuous paths $\omega=(\omega_t)_{t\in\ii}$ from the time interval $\ii$ to $\XX$ such that $\Iii |\dot \omega_t|_{\omega_t}^2\,dt<\infty$ where $\dot \omega_t$ is the generalized derivative of $\omega$ at time $t$ and let $\mathrm{P}(\Oac)$ be the corresponding space of probability measures. It appears that
 $[\mu_0,\mu_1]$ is the time marginal flow (recall notation \eqref{eq-89})
\begin{equation}\label{eq-76}
\mu_t=\Ph_t,
\quad t\in\ii
\end{equation}
of some solution $\Ph\in\mathrm{P}(\Oac)$ of the following dynamical version of \eqref{eq-74},
\begin{equation}\label{eq-75}
\int _{\Oac}\CC(\omega) \,P(d \omega)\rightarrow \textrm{min};
\quad P\in \mathrm{P}(\Oac): P_0=\mu_0,P_1=\mu_1
\tag{MK$ _{\textrm{dyn},2}$}
\end{equation}
where the kinetic action $\CC$ is defined by
\begin{equation*}
\CC(\omega):=\Iii \frac{1}{2}|\dot \omega_t|_{\omega_t}^2\,dt\in[0,\infty],\quad \omega\in\Oac.
\end{equation*}
Suppose for simplicity that any solution $\pi^*\in\PXX$ of \eqref{eq-74} gives a zero mass  to the cut-locus so that there exists a unique minimizing geodesic $\gamma ^{xy}\in\Oac$ joining $x$ and $y$ for $\pi^*$-almost every $x,y\in\XX.$ 
Then, any solution $\Ph\in \mathrm{P}(\Oac)$ of \eqref{eq-75}  is in  one-one correspondence with a solution $\ph\in\PXX$ of \eqref{eq-74} via the relation 
\begin{equation}\label{eq-84}
\Ph(\cdot)=\IXX \delta _{\gamma ^{xy}}(\cdot)\, \ph(dxdy)\in \mathrm{P}(\Oac)
\end{equation}
where $\delta$ stands for a Dirac probability measure. 
With \eqref{eq-76}, we see that the displacement interpolation $[\mu_0,\mu_1]$ satisfies
\begin{equation}\label{eq-82}
\mu_t=\IXX \delta _{\gamma ^{xy}_t}(\cdot)\, \ph(dxdy)\in\PX,\quad t\in\ii.
\end{equation}
In particular, with $\mu_0=\delta_x$ and $\mu_1=\delta_y,$ we obtain $[\delta_x,\delta_y]=(\delta _{\gamma ^{xy}_t})_{t\in\ii}.$ This signifies that the notion of displacement interpolation lifts the notion of action minimizing geodesic from the manifold $\XX$ onto the Wasserstein space $\PdX$.

It follows from \eqref{eq-76} and \eqref{eq-75} that $W_2(\mu_0,\mu_1)$  admits the  Benamou-Brenier representation \cite{BB00}:
\begin{equation}\label{eq-77}
 W_2^2(\mu_0,\mu_1)=
	\inf _{(\nu,v)} \left\{\IiX  |v_t(x)|^2_x\, \nu_t(dx)dt\right\} 
\end{equation}
 where the infimum  is taken over all regular enough $(\nu,v)$ such that $\nu=(\nu_t)_{0\le t\le 1}\in \PdX ^{\ii},$ $v$ is a  vector field and these quantities are linked by the following current equation (in a weak sense) with boundary values:
\begin{equation*}
\left\{\begin{array}{l}
\partial_t \nu+\nabla\scal(\nu\, v)=0,\quad t\in (0,1)\\
\nu_0=\mu_0,\ \nu_1=\mu_1.
\end{array}\right.
\end{equation*}

\subsection*{Displacement interpolations on a discrete metric graph $(\XX,\sim,d)$}

The countable set $\XX$ of vertices is endowed with a graph structure: $x\sim y$ means that $x\ne y$ and $\left\{x,y\right\}$ is an undirected edge. Let $\OO\subset \XX ^{\ii}$ be the natural path space for a random walk on  the graph $(\XX,\sim):$ $\OO$ is the space of all left-limited, right-continuous,  piecewise constant paths $\omega=(\omega_t)_{0\le t\le1}$ on $\XX$ with finitely many jumps such that: $\forall t\in (0,1), \omega _{t^-}\ne \omega _{t}\Rightarrow \omega _{t^-}\sim \omega_t.$  The distance $d$ is in accordance with the graph structure, meaning that it is required to be intrinsic in the discrete sense, that  is
$ 
d(x,y)=\inf \left\{\ell(\omega); \omega\in\OO: \omega_0=x, \omega_1=y\right\} ,\ x,y\in\XX
$ 
with
\begin{equation}\label{eq-90}
\ell(\omega):=\st d(\omega _{t^-},\omega _{t}),\quad \omega\in\OO
\end{equation} 
the discrete length of the discontinuous path $\omega$. As before, we are going to define the displacement interpolation $[\mu_0,\mu_1]=(\mu_t)_{t\in\ii}$ by formula \eqref{eq-76}:
\begin{equation}\label{eq-76b}
\mu_t:=\Ph_t,\quad 0\le t\le 1
\end{equation}
 where $\Ph\in\PO$ is some \emph{singled out solution} of the following order-one analogue of \eqref{eq-75}:
\begin{equation}\label{MKdyn}
\IO \ell (\omega)\,P(d \omega)\to \textrm{min};\qquad P\in\PO: P_0=\mu_0, P_1=\mu_1.
\tag{MK$_{\textrm{dyn}}$}
\end{equation}
The random walk $\Ph$ minimizes the average length while transporting the mass distribution $\mu_0$ on $\XX$ onto another mass distribution $\mu_1.$\\ What is meant by ``singled out solution'' when talking about $\Ph$  will be made precise in a while. For the moment, let us say that $\Ph$ is selected among the infinitely many -- see \eqref{eq-91} below -- solutions of \eqref{MKdyn}, as being the limit  of a sequence of solutions of entropy minimizing problems associated with the slowing down procedure that was invoked when describing the lazy gas experiment. See \eqref{eq-93} below.

Since $d$ is assumed to be intrinsic, we shall see that the push-forward of \eqref{MKdyn}  onto $\XXX$ is the Monge-Kantorovich problem
\begin{equation}\label{MK}
\IXX d(x,y)\,\pi(dxdy)\rightarrow \textrm{min};\qquad \pi\in\PXX: \pi_0=\mu_0,\pi_1=\mu_1.
\tag{MK}
\end{equation}
It leads  to the Wasserstein pseudo-distance of order one
\begin{equation*}
W_1(\mu_0,\mu_1):=\inf \eqref{MK}
\end{equation*} 
which is a distance on
\begin{equation*}
\PuX:=\left\{\mu\in\PX; \IX d(x_o,x)\,\mu(dx)<\infty\right\}.
\end{equation*}
For each $x,y\in\XX,$ let us denote
\begin{equation*}
\Gamma ^{xy}:=\left\{\omega\in\OO; \omega_0=x, \omega_1=y, \ell(\omega)=d(x,y)\right\} 
\end{equation*}
 the set of all \emph{geodesics} joining $x$ and $y$.
 Remark that when $x$ and $y$ are distinct,  $\Gamma ^{xy}$ contains infinitely many paths since it is characterized by  ordered sequences of  visited states, regardless of the instants of jumps. On the other hand,
it is easily seen that for any measurable \emph{geodesic kernel}  $(Q ^{xy}\in \mathrm{P}(\Gamma ^{xy}); x,y\in\XX)$   and for any  $\pi^*$  solution of  \eqref{MK},  
\begin{equation}\label{eq-91}
P^*(\cdot):=\IXX Q ^{xy}(\cdot)\, \pi^*(dxdy)\in\PO
\end{equation}
solves \eqref{MKdyn}. It follows that  \eqref{MKdyn}  admits infinitely many solutions and also that the static and dynamical Monge-Kantorovich problems have the same optimal value:
\begin{equation}\label{eq-97}
\inf \eqref{MK}=\inf \eqref{MKdyn}.
\end{equation}

The $(X_t)_{t\in\ii}$-push forward of the minimizer $P^*$ given at \eqref{eq-91} is 
\begin{equation}\label{eq-92}
P^*_t(\cdot)=\IXX Q ^{xy}_t(\cdot)\, \pi^*(dxdy)\in\PX,
\quad 0\le t\le1.
\end{equation}
Remark that \eqref{eq-91}-\eqref{eq-92} has  the same structure as \eqref{eq-84}-\eqref{eq-82}.
The slowing down procedure  selects one singled out geodesic kernel $(\GG ^{xy}\in \mathrm{P}(\Gamma ^{xy});x,y\in\XX)$ which does not depend on the specific choice of $\mu_0$ and $\mu_1$, see Theorem \ref{res-0a} below, and one singled out solution $\ph\in\PXX$  of \eqref{MK} such that 
\begin{equation}\label{eq-86}
\Ph(\cdot)=\IXX \GG ^{xy}(\cdot)\,\ph(dxdy)\in\PO
\end{equation}
and the displacement interpolation $[\mu_0,\mu_1]$  satisfies
\begin{equation}\label{eq-83}
\mu_t(\cdot)=\IXX \GG ^{xy}_t(\cdot)\,\ph(dxdy)\in\PX,\quad t\in\ii
\end{equation}
where for each $x,y\in\XX$, $\GG_t ^{xy}\in\PX$ is the $t$-marginal of $\GG ^{xy}\in \mathrm{P}(\Gamma ^{xy}).$

It will be proved that $\Ph$ and every $G ^{xy}$ have the Markov property.

The defining identity $\mu_t:=\Ph_t,$ $t\in\ii,$ states that $\Ph$ is a dynamical coupling of $[\mu_0,\mu_1].$

Comparing \eqref{eq-84} and \eqref{eq-86} leads us to the following analogies:
\begin{itemize}
\item
The optimal plan $\ph$ in \eqref{eq-83} refers to \eqref{MK}, while in \eqref{eq-82} it refers to \eqref{eq-74}.
\item
The Markov random walk $\GG ^{xy}\in \mathrm{P}(\Gamma ^{xy})$ in \eqref{eq-83} corresponds to  $\delta _{\gamma ^{xy}}$ in \eqref{eq-82}. In particular, we see that the deterministic behaviour of $\delta _{\gamma ^{xy}}$ must be replaced with a genuinely random walk $\GG ^{xy}.$ 
\end{itemize}
\emph{
The  geodesic kernel $(\GG ^{xy}\in \mathrm{P}(\Gamma ^{xy});x,y\in\XX)$ encodes some geodesic dynamics of the discrete metric graph $(\XX,\sim,d).$}

The analogy between \eqref{eq-84}-\eqref{eq-82} and \eqref{eq-86}-\eqref{eq-83} entitles us to propose the following definitions.

\begin{definitions}\label{def-03}
We call $\mu=(\Ph_t)_{0\le t\le1}$ the  $(R,d)$-displacement interpolation and $\Ph$ the $(R,d)$-displacement random walk between $\mu_0$ and $\mu_1.$
\\
We denote $\mu=[\mu_0,\mu_1]^{(R,d)}$ or more simply $[\mu_0,\mu_1]^{R}$ or $[\mu_0,\mu_1]$ when the context is clear.
\end{definitions}

It follows from $\mu_t:=\Ph_t, 0\le t\le 1,$  \eqref{MKdyn} and the Markov property of $\Ph$ that $W_1(\mu_0,\mu_1)$  admits the  Benamou-Brenier type representation
\begin{equation}\label{eq-59b}
W_1(\mu_0,\mu_1)
= \inf _{(\nu,\JJ)}\Iii dt \IXX d(z,w)\,\nu _{t}(dz) \JJ _{t,z}(dw)<\infty 
\end{equation}
where the infimum is taken over all  couples $(\nu,\JJ)$ such that $\nu=(\nu_t)_{t\in\ii}\in \PX ^{\ii}$ is a time-differentiable  flow of probability measures on $\XX,$ $\JJ=(\JJ _{t,z})_{t\in\ii,z\in\XX}\in \MX ^{\ii\times\XX}$ is a measurable jump kernel, $\nu$ and $\JJ$ are linked by the current equation
\begin{equation*}
\left\{
\begin{array}{l}
\partial_t \nu_t(z)+\IX [\nu_t(z)\JJ _{t,z}(dw)-\nu_t(dw)\JJ _{t,w}(z)]=0,\qquad 0<t<1,\ z\in\XX\\
\nu_0=\mu_0,\ \nu_1=\mu_1
\end{array}
\right.
\end{equation*}
with $\IX \nu_t(z)\JJ _{t,z}(dw)<\infty$ for all $0<t<1,\ z\in\XX.$
The infimum
 $\inf _{(\nu,\JJ)}$  is attained at $(\mu,\Jh)$ where $\mu$ is  the displacement interpolation and $\Jh$ is the jump kernel of the Markov displacement random walk $\Ph$. Hence
\begin{equation*}
W_1(\mu_0,\mu_1)= \Iii dt \IXX d(z,w)\,\mu_t(dz)\Jh _{t,z}(dw).
\end{equation*}
This is in complete analogy with the standard Benamou-Brenier formula \eqref{eq-77}. 
The detailed formulation of this Benamou-Brenier formula is given at Theorem \ref{res-0e}.

\subsection*{The discrete  metric measure graph $(\XX,\sim, d,m)$}

Let us equip $(\XX,\sim,d)$ with a positive measure $m\in\MX.$
 We introduce a sequence $R_k\in\MO$ of \emph{slowed down} continuous-time Markov random walks on $\XX$ which is respectful of the graph  and the metric measure structures of $(\XX,\sim, d,m)$.

For each $k\ge1,$ the Markov generator $L^k=(L^k _{t})_{0\le t\le1}$ ot $R^k$
is defined  for any finitely supported function $u$ by
\begin{equation}\label{eq-85}
L^k_tu(x)=\sum _{y:y\sim x}k ^{-d(x,y)} J _{t,x} (y) [u(y)-u(x)],\quad  t\in\ii, x\in\XX, k\ge1.
\end{equation}
In this formula $J _{t,x}(y)>0$ is the average rate of jumps of the random walk from $x$ to $y$ at time $t$ when $k=1$. For some detail about random walks, see the appendix Section \ref{sec-RW}.
\\
 As $k$ increases, the sequence of generators $(L^k)_{k\ge1}$
tends down to zero. It  describes the dynamics of slowed down random walks which ultimately do not move anymore. We call them \emph{lazy random walks}.
For each $k$, the random walk with generator $L^k$ and initial measure $m\in\MX$ is described by a positive measure $$R^k\in\MO$$ on the path space $\OO$ (the letter $R$ stands for \emph{reference} measure). 

We also assume that the graph $(\XX,\sim)$ is irreducible and that the initial positive measure $m=(m_x)_{x\in\XX}$ and the jump kernel $J$ satisfy the detailed balance conditions
\begin{equation}\label{eq-94}
m_xJ _{t,x}(y)=m_y J _{t,y}(x),\quad \forall x,y\in\XX, t\in\ii.
\end{equation}
Together with the irreducibility assumption, these conditions imply that $m_x>0,\forall x\in\XX,$ whenever $m(\XX)>0,$ which is always assumed. 
A large class of $m$-reversible random walks is described at the appendix Section \ref{sec-RW}, see \eqref{eq-44}.

Let us comment on the connection between this sequence of lazy random walks and the metric graph measure structure of $(\XX,\sim,d,m).$ 
\begin{enumerate}
\item[($\sim$)]
As  for each $k, t$ and $x$, the support of the jump measure $J _{t,x}=\sum _{y:y\sim x}J _{t,x}(y)\, \delta_y\in\MX$ is the set of neighbours of $x$,  the random walker which is governed by $R^k$ is allowed to jump  from $x$ to $y$ if and only if $x$ and $y$ are neighbours.

\item[($m$)]
Because of \eqref{eq-94}, the Markov random walks $R^k$ are  assumed to be $m$-stationary, i.e.\ for all $t\in\ii$ the $t$-marginal $R^k_t$ of $R^k$ satisfies $R^k_t=m$. Furthermore, for all $0\le t\le1$ and $k\ge1,$ the generators $L_t^k$ are self-adjoint on $L^2(\XX,m).$
\item[($d$)]
The connection with the distance $d$ is less immediate.  For any $x,y\in\XX,$ we shall prove at Theorem \ref{res-11} that the sequence of bridges $R ^{k,xy}\in\PO$ from $x$ to $y$ (see notation \eqref{eq-81}) converges to a probability measure which is concentrated on  the set $\Gamma ^{xy}$ of all geodesics joining $x$ and $y.$
\end{enumerate}
Therefore, in some sense, the sequence $(R^k)_{k\ge1}$ of lazy random walks  is respectful of the discrete  metric measure graph structure of $(\XX,\sim, d,m)$.

\subsection*{Optimal transport appears at the limit of the slowing down procedure}

Let us briefly sketch the connection between lazy random walks and optimal transport. The main idea is to consider, as in \cite{Leo12a}, a sequence of entropy minimizing problems which are called Schr\"odinger problems. This is the mathematical implementation of the lazy gas experiment that was described in the introduction. Details about the connection between the lazy gas experiment and the Schr\"odinger problem are given in the author's survey paper \cite[\S 6]{Leo12e}. 

Recall that the relative entropy of a probability measure $p$ on the measurable space $Y$ with respect to some reference positive measure $r\in \mathrm{M}_+(Y)$ on $Y$ is roughly defined by 
\begin{equation*}
H(p|r):=\int_Y \log(dp/dr)\, dp\in(-\infty,\infty],\quad p\in \mathrm{P}(Y)
\end{equation*}
if $p$ is absolutely continuous with respect to $r$ and $+\infty$ otherwise. For a rigorous definition, see the appendix Section \ref{sec-ent}. For each $k\ge2,$ consider the entropy minimizing problem that is called a dynamical \emph{Schr\"odinger problem} 
\begin{equation}\label{Skdyn}
H(P| R^k)/\log k\rightarrow \textrm{min};\qquad P\in\PO: P_0=\mu_0, P_1=\mu_1.
 \tag{S$_{\mathrm{dyn}}^{k}$}
\end{equation}
See \cite{Leo12e} for more detail about  Schr\"odinger's problem.
It should be compared with \eqref{MKdyn}. Note that unlike the affine minimization problem \eqref{MKdyn}, \eqref{Skdyn} is a \emph{strictly} convex minimization problem. Hence, \eqref{Skdyn} admits at most one solution while \eqref{MKdyn}  admits an infinite convex set of solutions.
\\
It is proved at Lemma \ref{res-03} that
\begin{equation*}
\Glim k \eqref{Skdyn}=\eqref{MKdyn}
\end{equation*}
where this $\Gamma$-convergence refers to the standard narrow topology $\sigma(\PO,C_b(\OO))$ on $\PO$ when $\OO$ is endowed with the Skorokhod topology, see \eqref{eq-19} for  detail about $\OO.$ For each $k$, let $\Ph^k\in\PO$ denote the \emph{unique} solution of \eqref{Skdyn}. As a  consequence of this $\Gamma$-limit, one expects that any  limit point of $(\Ph^k)_{k\ge1}$ solves \eqref{MKdyn}. We shall do better at Theorem \ref{res-0a} which states that $(\Ph^k)_{k\ge1}$ is a convergent sequence and that 
\begin{equation}\label{eq-93}
\Lim k\Ph^k:=\Ph\in\PO
\end{equation}
is represented by \eqref{eq-86} where the geodesic kernel $(\GG ^{xy})_{x,y}$ is expressed in terms of the geodesic sets $(\Gamma ^{xy})_{x,y}$, $J$ and $R:=R ^{k=1}.$ 
\\
Pushing forward \eqref{Skdyn} from $\PO$ onto $\PXX$ via the $(0,1)$-marginal projection $(X_0,X_1)$ gives us 
\begin{equation}\label{Sk}
H(\pi| R _{01}^k)/\log k\to \textrm{min};\qquad \pi\in\PXX: \pi_0=\mu_0,\ \pi_1=\mu_1
 \tag{S$^k$}
\end{equation}
where $R _{01}^k\in\PXX$ is  the joint law of the initial and final positions of the random walk $R^k$, see notation \eqref{eq-80}. A consequence of $\Glim k \eqref{Skdyn}=\eqref{MKdyn}$ is
\begin{equation*}
\Glim k \eqref{Sk}=\eqref{MK}
\end{equation*}
and it will proved that for each $k,$ the unique solution $\ph^k\in\PXX$ of \eqref{Sk} is the joint law   of the initial and final positions of the random walk $\Ph^k,$ i.e.\ $\ph^k:=\Ph^k _{01}$.   
Therefore, the slowing down procedure $\Lim k\Ph^k=\Ph$ selects
\begin{enumerate}
\item
one solution $\ph:=\Lim k\ph^k\in\PXX$ of the static problem  \eqref{MK} and
\item
one random dynamics encoded in $(\GG ^{xy};x,y\in\XX).$
\end{enumerate}
It is interesting to note that the convergence $\Lim k\ph^k\in\PXX=\ph$ is a by-product of its dynamical analogue.

\section{Main results about  displacement random walks}\label{sec-dRW}

We gather our assumptions before stating our main results about the displacement random walks.

\subsection*{The underlying hypotheses}

The following set of hypotheses will prevail for  the rest of the paper.

\begin{hypotheses}\label{hyp-00}
The vertex set $\XX$ is countable.
\begin{enumerate}
\item[($\sim$)]
\begin{enumerate}[-]
\item
$(\XX,\sim)$ is irreducible: for any $x,y\in\XX,$ there exists a finite chain $x_1,x_2,\dots,x_n$ in $\XX$ such that $x=x_1\sim x_2\sim\cdots\sim x_n=y.$
\item
$(\XX,\sim)$ contains no loop: $x\sim x$ is forbidden.
\item
$(\XX,\sim)$ is locally finite:  any  vertex $x\in\XX$ admits finitely many neighbours
\begin{equation}\label{eq-03}
n_x:= \# \left\{y\in\XX; y\sim x\right\} <\infty,\quad \forall x\in\XX.
\end{equation}
\end{enumerate}
\item[($d$)]
\begin{enumerate}[-]
\item
The distance $d$ is positively lower bounded: for all $x\ne y\in\XX,$ $d(x,y)\ge1.$
\item
The distance $d$ is intrinsic in the discrete sense: 
\begin{equation*}
d(x,y)=\inf \left\{\ell(\omega); \omega\in\OO: \omega_0=x, \omega_1=y\right\} ,\quad x,y\in\XX
\end{equation*}
where the discrete length $\ell$ is defined at \eqref{eq-90}.
\end{enumerate}

\item[($R$)]
The reference path  measure $R\in\MO$ is assumed to be Markov with a forward  jump kernel $(J _{t,x}\in\MX; t\in\ii,x\in\XX)$ such that:
\begin{enumerate}[-]
\item 
For any $x,y\in\XX,$ we have
$J _{t,x}(y)>0,$ $\forall t\in\ii$ if and only if $x\sim y$.
\item
$J$ is uniformly bounded, i.e.
\begin{equation}\label{eq-01b}
\sup _{t\in\ii,x\in\XX}J _{t,x}(\XX)<\infty.
\end{equation}
\end{enumerate} 

\item[($R^k$)]
For each $k\ge1,$ the slowed down random walk $R^k\in\MO$ is the Markov measure with the forward jump kernel
\begin{equation}\label{eq-06b}
J^k _{t,x}:=\sy k ^{-d(x,y)}J _{t,x}(y)\,\delta_y,\qquad t\in\ii, x\in\XX, 
\end{equation}
and the initial measure is $R^k_0=m\in\MX$ with $m_x>0$ for all $x\in\XX.$
\item[($\mu$)]
The prescribed probability measures $\mu_0$ and $\mu_1\in\PX$ satisfy the following requirements. 
There exists   some $\pi^o\in\PXX$ such that\\ $\pi^o_0=\mu_0,$ $\pi^o_1=\mu_1$, $\IXX E _{R ^{xy}}(\ell)\, \pi^o(dxdy)<\infty$ and $H(\pi^o|R _{01})<\infty$.

\end{enumerate}
\end{hypotheses}

We give a simple criterion for the Hypothesis ($\mu$) to be verified.
Remark that for the problems $\eqref{Skdyn}$ and $\eqref{Sk}$ to admit solutions, it is necessary that $H(\mu_0|R_0), H(\mu_1|R_1)<\infty$.

\begin{proposition}\label{res-43}

For the Hypothesis \ref{hyp-00}-($\mu$) to be satisfied, it is enough that in addition to $H(\mu_0|R_0), H(\mu_1|R_1)<\infty$,  there exists a nonnegative  function $A$ on $\XX$ such that
\begin{enumerate}[(i)]
\item
$\IXX e ^{-A(x)-A(y)}R _{01}(dxdy)<\infty$
\item
$R _{01}(dxdy)\ge e ^{-A(x)-A(y)}\, R_0(dx)R_1(dy)$
\item
$E _{R ^{xy}}(\ell)\le A(x)+A(y)$, for all $x,y\in\XX$
\item
$\IX A\,d \mu_0,\IX A\, d \mu_1<\infty.$
\end{enumerate}
\end{proposition}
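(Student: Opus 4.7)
The plan is to take the product measure $\pi^o := \mu_0 \otimes \mu_1$ as witness. Its marginals are automatically $\mu_0$ and $\mu_1$, so the plan is to verify the two remaining conditions of Hypothesis \ref{hyp-00}-($\mu$): the finite length $\IXX E_{R^{xy}}(\ell)\,\pi^o(dxdy) < \infty$ and the finite relative entropy $H(\pi^o|R_{01}) < \infty$.

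The length bound is the easy step: applying (iii) pointwise and integrating against $\mu_0 \otimes \mu_1$ gives, by (iv),
\begin{equation*}
\IXX E_{R^{xy}}(\ell)\, \pi^o(dxdy) \le \IXX [A(x)+A(y)]\,\mu_0(dx)\mu_1(dy) = \IX A\, d\mu_0 + \IX A\, d\mu_1 < \infty.
\end{equation*}

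The entropy bound uses (ii) through the following computation. The assumption $H(\mu_i|R_i)<\infty$ forces $\mu_i\ll R_i$, and (ii) gives $R_0\otimes R_1\ll R_{01}$ with density $\rho := d(R_0\otimes R_1)/dR_{01}$ satisfying $\rho(x,y) \le e^{A(x)+A(y)}$, $R_{01}$-a.e. Hence $\pi^o\ll R_{01}$, and the chain rule together with the additivity of relative entropy on products yields
\begin{align*}
H(\pi^o|R_{01}) &= H(\mu_0\otimes\mu_1\,|\,R_0\otimes R_1) + \IXX \log\rho\,d\pi^o \\
&= H(\mu_0|R_0) + H(\mu_1|R_1) + \IXX \log\rho\,d\pi^o \\
&\le H(\mu_0|R_0) + H(\mu_1|R_1) + \IX A\,d\mu_0 + \IX A\,d\mu_1 < \infty,
\end{align*}
where the last inequality combines the pointwise bound $\log\rho \le A(x)+A(y)$ with (iv).

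The main technical obstacle is the rigorous handling of relative entropy with respect to the possibly unbounded reference measures $R_i$ and $R_{01}$: in this setting, neither the chain rule nor the product-additivity identity invoked above are completely automatic and must be justified against the appendix's rigorous definition of $H$. This is where condition (i) enters: the exponential integrability $\IXX e^{-A(x)-A(y)}\,dR_{01} < \infty$ provides the test function needed to make the Donsker--Varadhan style definition of $H(\cdot|R_{01})$ applicable, and legitimizes the manipulations above. Once these ingredients from the appendix are granted, the argument collapses to the elementary computation just presented.
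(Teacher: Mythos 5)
Your proof is correct and follows essentially the same route as the paper: both take $\pi^o=\mu_0\otimes\mu_1$ as the witness plan, bound the length term using (iii) and (iv), and reduce the entropy requirement to $H(\mu_0\otimes\mu_1|R_{01})<\infty$. The only difference is that the paper delegates this last finiteness to a citation (Prop.\,2.5 of the author's survey), whereas you prove it directly via the chain rule $H(\pi^o|R_{01})=H(\mu_0|R_0)+H(\mu_1|R_1)+\int\log\rho\,d\pi^o$ with $\log\rho\le A(x)+A(y)$ coming from (ii); this is precisely the content of the cited proposition, and your remark that (i) together with (iv) is what makes $H(\cdot|R_{01})$ well defined for an unbounded reference measure --- via the weight $W(x,y)=A(x)+A(y)$ in the appendix's definition \eqref{eq-10} --- is exactly the right way to legitimize the computation.
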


In particular, Hypothesis \ref{hyp-00}-($\mu$) holds when $\XX$ is finite.

The proof of Proposition \ref{res-43} is done at Section \ref{sec-Sch}.

\subsection*{The minimization problems}

Let us recall for convenience the Monge-Kantorovich problems
\begin{equation*}
\IXX d(x,y)\,\pi(dxdy)\rightarrow \textrm{min};\qquad \pi\in\PXX: \pi_0=\mu_0,\pi_1=\mu_1
\tag{MK}
\end{equation*}
and
\begin{equation*}
\IO \ell (\omega)\,P(d \omega)\to \textrm{min};\qquad P\in\PO: P_0=\mu_0, P_1=\mu_1
\tag{MK$_{\textrm{dyn}}$}
\end{equation*}
and the Schr\"odinger problems which are defined for each $k\ge1,$ by
\begin{equation*}
H(\pi| R _{01}^k)/\log k\to \textrm{min};\qquad \pi\in\PXX: \pi_0=\mu_0,\ \pi_1=\mu_1
 \tag{S$^k$}
\end{equation*}
and
\begin{equation*}
H(P| R^k)/\log k\rightarrow \textrm{min};\qquad P\in\PO: P_0=\mu_0, P_1=\mu_1.
 \tag{S$_{\mathrm{dyn}}^{k}$}
\end{equation*}
We denote $$\Gamma:=\cup _{x,y\in\XX}\Gxy$$ the set of all geodesics. It is proved at Lemma  \ref{res-meas} that it is measurable, so that one is allowed to define the path measure
\begin{equation}\label{eq-35}
\Rt:= \1_\Gamma\exp \left(\Iii J _{t,X_t}(\XX)\,dt\right) \,R\in\MO.
\end{equation}
Let us denote $\SMK(\mu_0,\mu_1)\subset\PXX$ the set of all solutions to the Monge-Kantorovich problem \eqref{MK} and introduce the subsequent auxiliary entropic minimization problems
\begin{equation}\label{St}
H(\pi|\Rt _{01})\to \mathrm{min};\qquad \pi\in \SMK(\mu_0,\mu_1).
\tag{$\widetilde{\mathrm{S}}$}
\end{equation}
and
\begin{equation}\label{Stdyn}
H(P|\Rt)\to \mathrm{min};\qquad P\in\PO: P _{01}\in \SMK(\mu_0,\mu_1).
\tag{$\widetilde{\mathrm{S}}_{\mathrm{dyn}}$}
\end{equation}

\subsection*{Results about the displacement random walks}
We are now ready to state the main results about the random walks. Their consequences in terms of interpolations will be made precise at next section.

\begin{theorem}\label{res-0a}
The Hypotheses \ref{hyp-00} are assumed to hold.

\begin{enumerate}
\item
For all $k\ge2$, the problems \eqref{Sk} and \eqref{Skdyn}  admit respectively a unique solution $\ph^k\in\PXX$ and $\Ph^k\in\PO.$ \\ Moreover, $\Ph^k$ is Markov and  $\ph^k=\Ph^k _{01}.$
\item
\eqref{St} has a unique solution $\ph\in\PXX$ and  $\Lim k \ph^k=\ph.$
\\ As a definition, $\ph$ also solves \eqref{MK}.
\item
 \eqref{Stdyn} has a unique solution $\Ph\in\PO$ and $\Lim k \Ph^k=\Ph.$ 
\\ The limit $\Ph$ also solves \eqref{MKdyn}.
\item
$\Ph$ is the following mixture of bridges of $\Rt$:
\begin{equation}\label{eq-22}
\Ph(\cdot)=\IXX \Rt ^{xy}(\cdot)\,\ph(dxdy)\in\PO,
\end{equation}
meaning that it  satisfies $\Ph _{01}=\ph$ and that $\Ph$ shares its bridges with the geodesic path measure $G$ defined by \eqref{eq-35}: $\Ph ^{xy}=\Rt ^{xy}$ for $\ph$-almost every $(x,y)$. 
\end{enumerate}
\end{theorem}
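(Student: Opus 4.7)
The plan is to treat the four parts in turn, with the core of the argument being a second-order $\Gamma$-asymptotic expansion of $H(\cdot|R^k)/\log k$ in the Anzellotti--Baldo style.

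\emph{Existence, uniqueness, Markov property (Part 1).} Strict convexity of the relative entropy gives uniqueness in \eqref{Sk} and \eqref{Skdyn}. For existence, I would build an admissible $P^o\in\PO$ with $H(P^o|R^k)<\infty$ by lifting the $\pi^o$ provided by Hypothesis \ref{hyp-00}-($\mu$): setting $P^o(\cdot):=\IXX R^{xy}(\cdot)\,\pi^o(dxdy)$, the bridge decomposition of entropy gives $H(P^o|R)=H(\pi^o|R_{01})<\infty$, and a Girsanov-type computation of $dR^k/dR$ (announced for Section \ref{sec-convergence}) yields $\log(dR/dR^k)=\ell\log k-\int_0^1(J_{t,X_t}(\XX)-J^k_{t,X_t}(\XX))\,dt$ on $\Gamma$; hence $E_{P^o}(\ell)<\infty$, which is exactly the integrability built into Hypothesis ($\mu$), implies $H(P^o|R^k)<\infty$. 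That $\Ph^k_{01}=\ph^k$ and $\Ph^k$ is Markov is the classical F\"ollmer--Jamison observation: the chain-rule identity
\begin{equation*}
H(P|R^k)=H(P_{01}|R^k_{01})+\IXX H(P^{xy}|R^{k,xy})\,P_{01}(dxdy)
\end{equation*}
forces the minimiser to share its bridges with the Markov measure $R^k$, and an $h$-transform of a Markov kernel is again Markov.

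\emph{Narrow convergence to the selected limits (Parts 2 and 3).} Combining the Girsanov expression with $\Rt=\1_\Gamma\exp(\int_0^1 J_{t,X_t}(\XX)\,dt)\,R$ gives, for any $P$ concentrated on $\Gamma$ with $H(P|R)<\infty$,
\begin{equation*}
\frac{H(P|R^k)}{\log k}=E_P(\ell)+\frac{1}{\log k}\left[H(P|\Rt)+E_P\Big(\int_0^1 J^k_{t,X_t}(\XX)\,dt\Big)\right],
\end{equation*}
while paths outside $\Gamma$ incur an extra penalty $(\ell-d(X_0,X_1))\log k$ that becomes exponentially large, forcing any sequence with bounded $H(\cdot|R^k)/\log k$ to concentrate on $\Gamma$. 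Since $J^k_{t,x}(\XX)\to 0$, the bracket tends to $H(P|\Rt)$, which is exactly the second-order expansion needed to invoke Lemma \ref{res-04}: the zero-order $\Gamma$-limit is $E_P(\ell)$ (already identified with \eqref{MKdyn}) and the corrective functional on $\SMK(\mu_0,\mu_1)$ is $H(\cdot|\Rt)$. Tightness of $(\Ph^k)$ follows from the uniform upper bound $H(\Ph^k|R^k)/\log k\le H(P^o|R^k)/\log k$, and the strict convexity of $H(\cdot|\Rt)$ selects a unique cluster point $\Ph\in\PO$ which solves \eqref{Stdyn} and a fortiori \eqref{MKdyn}. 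Pushing forward by $(X_0,X_1)$ and using the bridge decomposition transfers the statement to the static level, delivering $\ph^k\to\ph$ and unique solvability of \eqref{St}.

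\emph{Bridge representation (Part 4).} Once again the decomposition
\begin{equation*}
H(P|\Rt)=H(P_{01}|\Rt_{01})+\IXX H(P^{xy}|\Rt^{xy})\,P_{01}(dxdy)
\end{equation*}
is the key: for every admissible $P$ with $P_{01}\in\SMK(\mu_0,\mu_1)$, the integral is nonnegative and vanishes precisely when $P^{xy}=\Rt^{xy}$ for $P_{01}$-a.e.\ $(x,y)$. Hence the mixture $\int\Rt^{xy}P_{01}(dxdy)$ minimises $H(\cdot|\Rt)$ among path measures with joint law $P_{01}$, so that \eqref{Stdyn} reduces to \eqref{St}. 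Part 2 identifies the optimal joint law as $\ph$, which proves \eqref{eq-22}.

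\emph{Main obstacle.} The delicate step is the second-order $\Gamma$-expansion in Part 3: along a sequence $(P^k)$ with bounded $H(P^k|R^k)/\log k$ one must control the residual $E_{P^k}[\int_0^1 J^k_{t,X_t}(\XX)\,dt]$, ensure enough tightness to keep it from leaking mass off $\Gamma$, and verify the two inequalities of $\Gamma$-convergence after division by $\log k$. The hypothesis $d\ge 1$ is essential: it guarantees the uniform suppression $k^{-d(x,y)}\le k^{-1}$ that makes this residual vanish and that eventually forces concentration on $\Gamma$.
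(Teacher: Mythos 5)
Your proposal is correct and follows essentially the same route as the paper: Girsanov's formula plus the bridge decomposition of entropy for Part 1, the Anzellotti--Baldo second-order $\Gamma$-expansion (the paper's Lemma \ref{res-04}, with the renormalised functional $\log k\,(I^k-i)$ whose $\Gamma$-limit is $H(\cdot|R_J)$ restricted to the minimisers of \eqref{MKdyn}) for Parts 2--3, and the additive disintegration of $H(\cdot|\Rt)$ for Part 4. The only step you elide is the existence of a minimiser of \eqref{Skdyn} once the infimum is shown finite (lower semicontinuity and coercivity of $H(\cdot|R^k)$ on the closed constraint set), which the paper likewise delegates to a cited lemma.
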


The proof of Theorem \ref{res-0a}-(1) is done at the end of Section \ref{sec-Sch} and the proof of Theorem \ref{res-0a}-(2-3-4) is done at the end of Section \ref{sec-convergence}.

As  a corollary, we obtain the following result.

\begin{theorem}\label{res-11}
For any $x,y\in \XX$ such that $E _{R ^{xy}}(\ell)<\infty,$  the sequence $(R ^{k,xy})_{k\ge1}$ of bridges of $(R^k)_{k\ge 1}$ is convergent and $\Lim k R ^{k,xy}=\Rt ^{xy}.$
\end{theorem}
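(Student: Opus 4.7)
\medskip
\noindent\textbf{Proof plan for Theorem \ref{res-11}.}

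The plan is to specialize Theorem \ref{res-0a} to the endpoint marginals $\mu_0:=\delta_x$ and $\mu_1:=\delta_y$ and read the convergence $R ^{k,xy}\to \Rt ^{xy}$ directly off the convergence $\Ph^k\to\Ph$ stated there. The only preliminary work is to check that Hypothesis \ref{hyp-00}-($\mu$) is satisfied for this pair of Dirac masses and to identify the objects $\Ph^k$, $\ph$, $\Ph$ for this choice.

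\emph{Step 1: checking Hypothesis \ref{hyp-00}-($\mu$).} Take $\pi^o:=\delta _{(x,y)}\in\PXX$. Its marginals are obviously $\mu_0=\delta_x$ and $\mu_1=\delta_y$, and
\[
\IXX E _{R ^{x'y'}}(\ell)\,\pi^o(dx'dy')=E _{R ^{xy}}(\ell)<\infty
\]
by assumption on $(x,y)$. Next, $R _{01}$ has a positive atom at $(x,y)$: because $m_x>0$ and because the graph is irreducible with everywhere positive jump rates (Hypothesis \ref{hyp-00}-($R$)), the Markov measure $R$ satisfies $R _{01}(\{(x,y)\})=m_x\, R ^{x}(X_1=y)>0$. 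Hence $H(\pi^o|R _{01})=-\log R _{01}(\{(x,y)\})<\infty$, and Hypothesis \ref{hyp-00}-($\mu$) holds.

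\emph{Step 2: identifying the optimizers for $(\mu_0,\mu_1)=(\delta_x,\delta_y)$.} Any $\pi\in\PXX$ with marginals $\delta_x,\delta_y$ must equal $\delta _{(x,y)}$, so both \eqref{MK} and \eqref{St} admit $\delta _{(x,y)}$ as unique solution, whence $\ph=\delta _{(x,y)}$. For each $k\ge2$, any $P\in\PO$ with $P_0=\delta_x$ and $P_1=\delta_y$ satisfies $P _{01}=\delta _{(x,y)}$, and the disintegration $P(\cdot)=\int P ^{x'y'}(\cdot)\,P _{01}(dx'dy')=P ^{xy}$ together with the additivity of the relative entropy along the $(X_0,X_1)$-fiber gives
\[
H(P|R^k)=-\log R ^{k}_{01}(\{(x,y)\})+H(P ^{xy}|R ^{k,xy}),
\]
which is minimized precisely by $P ^{xy}=R ^{k,xy}$; therefore the unique minimizer of \eqref{Skdyn} is $\Ph^k=R ^{k,xy}$.

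\emph{Step 3: identifying the limit and concluding.} Plugging $\ph=\delta _{(x,y)}$ into the mixture formula \eqref{eq-22} of Theorem \ref{res-0a}(4) yields
\[
\Ph(\cdot)=\IXX \Rt ^{x'y'}(\cdot)\,\delta _{(x,y)}(dx'dy')=\Rt ^{xy}.
\]
Theorem \ref{res-0a}(3) asserts $\Lim k \Ph^k=\Ph$ in $\PO$, which in view of the two identifications above is exactly $\Lim k R ^{k,xy}=\Rt ^{xy}$, as claimed. The only nontrivial input in this argument is Theorem \ref{res-0a} itself; the step that could be considered delicate here is the disintegration-plus-entropy-additivity identification of $\Ph^k$ as $R ^{k,xy}$, but this is a standard property of bridges of $\sigma$-finite Markov measures and poses no real obstacle.
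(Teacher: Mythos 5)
Your proposal is correct and follows essentially the same route as the paper: the paper's proof also specializes Theorem \ref{res-0a} to $\mu_0=\delta_x$, $\mu_1=\delta_y$, observes that $\ph^k=\ph=\delta_{(x,y)}$ and $\Ph^k=R^{k,xy}$ via the bridge identity \eqref{eq-23}, and concludes. Your Steps 1--2 merely spell out the verification of Hypothesis \ref{hyp-00}-($\mu$) and the entropy-additivity argument behind \eqref{eq-23}, which the paper leaves implicit.
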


\begin{proof}
Under the marginal constraints $\mu_0= \delta_x$ and $\mu_1=\delta_y,$ we have for all $k\ge2,$ $\ph^k=\ph=\delta _{(x,y)}$ and $\Ph^k=R ^{k,xy}$ by \eqref{eq-23}. It remains to apply Theorem \ref{res-0a}.
\end{proof}

We need some additional preliminary material to describe the dynamics of $\Ph$ and of the bridge $\Rt ^{xy}.$
Recall that a directed tree is a directed graph $(\ZZ,\to)$ that contains no circuit (directed loop). 
We denote $z\to z'$ when  the directed edge $(z,z')\in\ZZ^2$ exists and we define the order relation $\preceq$ by: $z\preceq z'$ if $z=z'$ or if there exists a finite path $z=z_1\to z_2\to\cdots\to z_n=z'.$
\\
Unlike  the following configuration (a),
\vskip 0,5cm
\begin{center}
\scalebox{1} 
{
\begin{pspicture}(0,-1.388496)(10.0,1.3684961)
\psdots[dotsize=0.16](0.08,0.4884961)
\psdots[dotsize=0.16](1.8,1.268496)
\psdots[dotsize=0.16](3.66,0.4884961)
\psline[linewidth=0.04cm,arrowsize=0.113cm 2.0,arrowlength=1.4,arrowinset=0.4]{->}(1.86,1.2284961)(3.62,0.4884961)
\psline[linewidth=0.04cm,arrowsize=0.113cm 2.0,arrowlength=1.4,arrowinset=0.4]{->}(0.14,0.54849607)(1.78,1.3084961)
\psline[linewidth=0.04cm,arrowsize=0.113cm 2.0,arrowlength=1.4,arrowinset=0.4]{<-}(0.04,0.46849608)(1.58,-0.5715039)
\psline[linewidth=0.04cm,arrowsize=0.113cm 2.0,arrowlength=1.4,arrowinset=0.4]{<-}(2.96,-0.2315039)(3.66,0.4484961)
\psdots[dotsize=0.16](2.94,-0.2315039)
\psdots[dotsize=0.16](1.62,-0.5715039)
\psline[linewidth=0.04cm,arrowsize=0.113cm 2.0,arrowlength=1.4,arrowinset=0.4]{<-}(1.68,-0.5715039)(2.86,-0.2515039)
\psdots[dotsize=0.16](6.32,0.46849608)
\psdots[dotsize=0.16](8.04,1.248496)
\psdots[dotsize=0.16](9.9,0.46849608)
\psline[linewidth=0.04cm,arrowsize=0.113cm 2.0,arrowlength=1.4,arrowinset=0.4]{->}(8.1,1.2084961)(9.86,0.46849608)
\psline[linewidth=0.04cm,arrowsize=0.113cm 2.0,arrowlength=1.4,arrowinset=0.4]{->}(6.38,0.5284961)(8.02,1.2884961)
\psline[linewidth=0.04cm,arrowsize=0.113cm 2.0,arrowlength=1.4,arrowinset=0.4]{->}(6.28,0.4484961)(7.82,-0.5915039)
\psline[linewidth=0.04cm,arrowsize=0.113cm 2.0,arrowlength=1.4,arrowinset=0.4]{->}(9.2,-0.2515039)(9.9,0.4284961)
\psdots[dotsize=0.16](9.18,-0.2515039)
\psdots[dotsize=0.16](7.86,-0.5915039)
\psline[linewidth=0.04cm,arrowsize=0.113cm 2.0,arrowlength=1.4,arrowinset=0.4]{->}(7.92,-0.5915039)(9.1,-0.2715039)
\usefont{T1}{ptm}{m}{n}
\rput(1.8451757,-1.1665039){(a)}
\usefont{T1}{ptm}{m}{n}
\rput(8.295176,-1.1665039){(b)}
\end{pspicture} 
}
\end{center}
configuration (b)  is not a circuit and it may enter a directed tree.

We  have in mind the directed tree $(\Gamma ^{xy}(\ii),\to)$ related to the set $\Gamma ^{xy}$ of all the geodesics from $x$ to $y$ on  $(\XX,\sim)$, where $z\to z'\in \Gamma ^{xy}(\ii)$ if $z\sim z'\in\XX$ and there are some $\gamma\in \Gamma ^{xy}$ and $0\le t<t'\le 1$ such that $\gamma _{t}=z$ and $\gamma _{t'}=z'.$ This tree describes the successive occurrence of the states which are visited by the geodesics from $x$ to $y$. It keeps the information of the order of occurrence, but it is regardless of the instants of jump.

\begin{theorem}[The dynamics of $\Rt ^{xy}$]\label{res-0b}
The Hypotheses \ref{hyp-00} are assumed to hold.
\begin{enumerate}
\item
Although $\Rt$ is not Markov in general, for every $x,y\in\XX,$ its bridge $\Rt ^{xy}$ is Markov.
\item
For every $x,y\in\XX,$ the jump kernel of the Markov measure $\Rt ^{xy}$ is given 
 by 
\begin{equation*}
\Jty_{t,z}=\sum _{w\in \left\{z\to\cdot\right\}^y }\frac{g^y_t(w)}{g^y_t(z)}J_{t,z}(w)\,\delta_w,\qquad 0\le t<1,\ z\in \Gamma ^{xy}(\ii),
\end{equation*}
where $\left\{z\to\cdot\right\}^y :=\left\{w\in \Gamma ^{zy}(\ii); z\to w\right\} $ is the set of all successors of $z$ in the directed tree $(\Gamma ^{zy}(\ii),\to)$ and
\begin{equation*}
g^y_t(z):= E_R \left[\exp \left(\int_t^1 J _{s,X_s}(\XX)\,ds\right) \1 _{\Gamma(t,z;1,y)} \mid X_t=z\right] 
\end{equation*}
with $$\Gamma(t,z;1,y):=\left\{\omega\in \OO; \omega_{|[t,1]}=\gamma _{|[t,1]} \textrm{ for some }\gamma\in \Gamma,\omega _{t}=z,\omega _{1}=y\right\},$$ the set of all geodesics from $z$ to $y$ on the time interval $[t,1].$
\end{enumerate}•

\end{theorem}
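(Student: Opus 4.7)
My plan is to derive both parts from a Radon--Nikodym description of $\Rt^{xy}$ with respect to the ordinary Markov bridge $R^{xy}$. Since by definition $\Rt = \1_{\Gamma}\exp\bigl(\Iii J_{s,X_s}(\XX)\,ds\bigr)\,R$, elementary conditioning yields
\begin{equation*}
\frac{d\Rt^{xy}}{dR^{xy}}(\omega) \propto \1_{\Gxy}(\omega)\, h(\omega),\qquad h(\omega):=\exp\Bigl(\Iii J_{s,\omega_s}(\XX)\,ds\Bigr).
\end{equation*}
The first step is to establish the following geometric factorization, which crucially uses that $d$ is intrinsic: for any $s\in(0,1)$ and any path $\omega$ with $\omega_0=x,\ \omega_1=y$,
\begin{equation*}
\1_{\Gxy}(\omega) = \1_{\Gamma^{x,X_s(\omega)}(0,s)}(\omega_{|[0,s]})\,\1_{\Gamma^{X_s(\omega),y}(s,1)}(\omega_{|[s,1]}),
\end{equation*}
which follows from $\ell(\omega)=\ell_{|[0,s]}(\omega)+\ell_{|[s,1]}(\omega)\ge d(x,X_s)+d(X_s,y)\ge d(x,y)$, the chain of inequalities being forced to equalities precisely when both restricted pieces are themselves geodesics.

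Combined with the trivial splitting of $h$ into $\exp(\int_0^s J_{u,X_u}(\XX)\,du)\cdot\exp(\int_s^1 J_{u,X_u}(\XX)\,du)$, this factors $\1_{\Gxy}h$ as a product of a $\sigma(X_u:u\le s)$-measurable factor and a $\sigma(X_u:u\ge s)$-measurable one. Since $R^{xy}$ is itself Markov (as the bridge of the Markov measure $R$), and any absolutely continuous perturbation of a Markov measure by a density admitting such a past/future factorization preserves conditional independence of past and future given the present, I conclude that $\Rt^{xy}$ is Markov, which proves (1). For (2), the forward jump kernel at $(t,z)$ is read off the conditional law of $\omega_{|[t,1]}$ given $X_t=z$: the factorization just established identifies this law with the probability measure on paths starting at $z$ whose density with respect to $R(\,\cdot\,\mid X_t=z)$ is proportional to $\1_{\Gamma(t,z;1,y)}\exp(\int_t^1 J_{u,X_u}(\XX)\,du)$, with normalizing constant precisely $g^y_t(z)$. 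This places us in a Doob-type $h$-transform of $R$ on $[t,1]$ with space-time harmonic function $g^y$, and a standard first-jump computation then produces $\Jty_{t,z}(w)=J_{t,z}(w)\,g^y_t(w)/g^y_t(z)$, with support restricted to $\{z\to\cdot\}^y$ because a first jump to $w\notin\{z\to\cdot\}^y$ would destroy the geodesic property and force $g^y_t(w)=0$ at such $w$.

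The technical heart of the argument is the first-jump computation producing the rate formula. The exponential weight $\exp(\int_t^1 J_{s,X_s}(\XX)\,ds)$ is indispensable: it exactly cancels the Poissonian survival factor built into $R$ between jumps, so that the rate interpretation of $J_{t,z}(w)$ passes cleanly through the change of measure and picks up only the harmonic ratio $g^y_t(w)/g^y_t(z)$. I would also need to verify that $g^y$ is space-time harmonic for $R$ restricted to the geodesic tree, which will follow from the Markov property of $R$ together with the same multiplicative factorization of $\1_{\Gamma(t,z;1,y)}$ used in the first step; positivity of $g^y_t(z)$ on the tree $\Gxy(\ii)$ is a subsidiary check, guaranteed by irreducibility of $(\XX,\sim)$ together with positivity of $J$ on edges.
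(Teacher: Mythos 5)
Your proposal follows essentially the same route as the paper: both proofs rest on the multiplicative factorization of $\1_\Gamma\exp\big(\Iii J_{s,X_s}(\XX)\,ds\big)$ across a time split (the paper's identity \eqref{eq-36}), and for part (2) your ``Doob $h$-transform plus first-jump computation'' is exactly the forward stochastic-derivative calculation the paper carries out, including the identification of the normalizing constant with $g^y_t(z)$ and the vanishing of $g^y_t(w)$ off the tree. For part (1) your packaging is slightly more direct: you factor the density of $\Rt^{xy}$ with respect to the Markov bridge $R^{xy}$ into a past-measurable times a future-measurable part, whereas the paper first shows that $R_J=\exp\big(\Iii J_{t,X_t}(\XX)\,dt\big)R$ is Markov, then that $\1_\Gamma R_J$ is \emph{reciprocal} (Lemma \ref{res-10}), and finally that bridges of reciprocal measures are Markov (Lemma \ref{res-09}). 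The two arguments are equivalent in substance; the paper's detour through the reciprocal property buys a statement about the unconditioned measure $\Rt$ that is reused later (e.g.\ for $\Ph$).

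One correction is needed. The displayed identity
\begin{equation*}
\1_{\Gxy}(\omega) = \1_{\Gamma^{x,X_s(\omega)}(0,s)}(\omega_{|[0,s]})\,\1_{\Gamma^{X_s(\omega),y}(s,1)}(\omega_{|[s,1]})
\end{equation*}
is false as stated: the right-hand side can equal $1$ while the left-hand side is $0$, namely when both restricted pieces are geodesics but $d(x,X_s)+d(X_s,y)>d(x,y)$ (e.g.\ on $\mathbb{Z}$ with $x=y$, a path that steps away and comes back). Your chain of inequalities has \emph{two} places where equality must be forced, and ``precisely when both restricted pieces are themselves geodesics'' accounts only for the first; the correct identity carries the additional factor $\1_{\{d(x,X_s)+d(X_s,y)=d(x,y)\}}$. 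This does not damage the argument, because the extra factor is $\sigma(X_s)$-measurable and can be absorbed into either the past or the future term, so the factorization needed for the Markov property survives; it is also consistent with the theorem asserting the jump-kernel formula only for $z\in\Gamma^{xy}(\ii)$. But the identity should be stated correctly, since as written it is the ``geometric'' heart of your proof of (1).
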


Remark that since $\Rt ^{xy}$ only visits $\Gamma ^{xy}(\ii),$ one can put $\Jty_{t,z}=0$ for any $z\not\in \Gamma ^{xy}(\ii)$.

The proof of Theorem \ref{res-0b} is given at Section \ref{sec-dyn}.

\begin{theorem}[The dynamics of $\Ph$]\label{res-0c}
The Hypotheses \ref{hyp-00} are assumed to hold.

\begin{enumerate}
\item
The limiting random walk $\Ph$ is Markov.
\item
The jump kernel  of the Markov measure $\Ph\in\PO$ is given by
\begin{equation*}
\Jh _{t,z}(\cdot)=\IX \Jty _{t,z}(\cdot)\, \Ph(X_1\in dy|X_t=z).
\end{equation*}
It is a mixture of the jump kernels $\Jty $ of $\Rt ^{xy},$ see Theorem \ref{res-0b}.
\end{enumerate}
\end{theorem}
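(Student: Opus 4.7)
The plan is to combine the mixture representation $\Ph = \IXX \Rt^{xy}\,\ph(dxdy)$ from Theorem \ref{res-0a}(4) with the Markov property of each bridge $\Rt^{xy}$ from Theorem \ref{res-0b}(1). A crucial preliminary step, valid because $d\Rt/dR = \1_\Gamma\exp(\Iii J_{s,X_s}(\XX)\,ds)$ is a product of a past-depending and a future-depending factor on $\{X_t = z\}\cap\{d(x,y)=d(x,z)+d(z,y)\}$ and because $R$ is Markov, is that $\Rt^{xy}$ conditioned on $X_t = z$ decouples into a past whose law depends only on $(x,z)$ (namely, the restriction of $\Rt^{xz}$ to $[0,t]$ conditioned on $X_t = z$) and a future whose law depends only on $(z,y)$ (namely, the restriction of $\Rt^{zy}$ to $[t,1]$ conditioned on $X_t = z$).

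For Part (1), I reduce the Markov property of $\Ph$ to a reciprocal condition: for $\Ph$-almost every $z$, the measure $\Rt^{xy}(X_t = z)\,\ph(dxdy)$ on $\XXX$ factorizes as a product $\alpha(x,z)\beta(z,y)$. Writing
\begin{equation*}
\Ph(X_{[0,t]}\in A,\, X_{[t,1]}\in B,\, X_t = z) = \IXX q^{\mathrm{past}}_{x,z}(A)\, q^{\mathrm{fut}}_{z,y}(B)\, \Rt^{xy}(X_t = z)\,\ph(dxdy),
\end{equation*}
this factorization combined with the $(x,z)/(z,y)$-dependence of the bridge conditionals yields the desired separation into past and future factors given $X_t = z$. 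To establish the factorization of $\Rt^{xy}(X_t = z)\,\ph(dxdy)$, I use the Schr\"odinger product structure $d\ph^k/dR_{01}^k = f^k\otimes g^k$ of the prelimit couplings combined with the Markov property of $R$ (which already makes $\Rt^{xy}(X_t = z)$ itself factorize on the geodesic locus, up to a factor depending on $\Rt_{01}(x,y)$), and pass the structure through the $\Gamma$-limit $\ph^k\to\ph$ of Theorem \ref{res-0a}(2). An alternative approach uses that each $\Ph^k$ is Markov by Theorem \ref{res-0a}(1), identifying the limiting generator via Doob $h$-transform asymptotics as $k\to\infty$.

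For Part (2), once the Markov property is in hand, the jump kernel $\Jh_{t,z}$ is extracted from
\begin{equation*}
\Ph(X_s = w\mid X_t = z) = \IX \Rt^{zy}(X_s = w\mid X_t = z)\,\Ph(X_1\in dy\mid X_t = z),\qquad s > t,
\end{equation*}
a direct consequence of the bridge representation together with the future-side decoupling (which gives $\Rt^{xy}(X_s \in \cdot\mid X_t = z) = \Rt^{zy}(X_s \in \cdot\mid X_t = z)$ for $s > t$, independent of $x$). Dividing by $s - t$, letting $s\downarrow t$, and invoking Theorem \ref{res-0b}(2) to identify the jump rate of $\Rt^{zy}$ at $(t,z)$ with $\Jty_{t,z}$ yields the claimed mixture $\Jh_{t,z} = \IX \Jty_{t,z}\,\Ph(X_1\in dy\mid X_t = z)$.

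The main obstacle is Part (1). A mixture of Markov measures is generally not Markov, so showing $\Ph$ is Markov requires the nontrivial reciprocal factorization of the coupling $\ph$ relative to the bridges of $\Rt$. The technical heart of the argument is showing that this factorization survives the slowing-down $\Gamma$-limit, since the reference couplings $R_{01}^k$ themselves degenerate as $k\to\infty$; this is precisely why the proof demands genuine work rather than a direct passage to the limit in the Markov property of $\Ph^k$.
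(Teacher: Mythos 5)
Your Part (2) is essentially the paper's argument: disintegrate $\Ph_{[t,1]}(\cdot\mid X_t=z)$ over the terminal position using the Markov property and the identity $\Ph^{xy}=\Rt^{xy}$, then compute the stochastic derivative and invoke Theorem \ref{res-0b}. That part is fine, conditionally on Part (1).

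Part (1) is where there is a genuine gap. Your main route reduces Markovianity of $\Ph$ to a product factorization of $\Rt^{xy}(X_t=z)\,\ph(dxdy)$, to be obtained by pushing the Schr\"odinger product structure $d\ph^k/dR^k_{01}=f^k\otimes g^k$ through the limit $\ph^k\to\ph$. You correctly flag that this is the technical heart, but you do not resolve it, and it does not go through as stated: the convergence $\ph^k\to\ph$ furnished by Theorem \ref{res-0a}(2) is only narrow convergence, which gives no control on the densities $f^k\otimes g^k$ (which degenerate together with $R^k_{01}$ as $k\to\infty$). Moreover, the limit $\ph$ is characterized as the entropy minimizer over the set $\SMK(\mu_0,\mu_1)$ of \emph{optimal} plans, not over all couplings with prescribed marginals, so even the limiting first-order conditions do not hand you a clean $f\otimes g$ structure relative to $\Rt_{01}$. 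The paper avoids this entirely: it proves (Lemma \ref{res-20}) that $\Ph^k\to\Ph$ in \emph{total variation}, by combining Scheff\'e's theorem with the pointwise convergence $d\Rt^{xy}/dR^{k,xy}\to 1$, which is itself extracted from Girsanov's formula and a Laplace principle applied to the bridge jump kernels; it then invokes Nagasawa's lemma (Lemma \ref{res-18}) that a total-variation limit of Markov measures is Markov. Your one-sentence ``alternative approach'' via the Markovianity of $\Ph^k$ points in this direction but misses the decisive issue: narrow convergence does \emph{not} preserve the Markov property (the paper notes explicit counterexamples), so the whole content of the proof is the upgrade from narrow to total-variation convergence, which your proposal does not supply.
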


The statement of Theorem \ref{res-0c}-(1) is the content of Proposition \ref{res-21} which is proved at Section \ref{sec-dyn}. The second statement is proved at the end of Section \ref{sec-dyn}.

Gathering Theorems \ref{res-0b} and \ref{res-0c}, one obtains for all $t$ and $z$ such that $\mu_t(z)>0,$ 
\begin{equation}\label{eq-63b}
\begin{split}	
\Jh _{t,z}(\cdot)=\IX \Bigg(\sum _{w\in \{z\to\cdot\}^y }\frac{E_R [\exp (\int_t^1 J _{s,X_s}(\XX)\,ds) \1 _{\Gamma(t,w;1,y)} \mid X_t=w]}{E_R \Big[\exp (\int_t^1 J _{s,X_s}(\XX)\,ds) \1 _{\Gamma(t,z;1,y)} \mid X_t=z\Big]}J_{t,z}(w)\,\delta_w(\cdot)\Bigg) \\
\times\quad \Ph(X_1\in dy|X_t=z).
\end{split}
\end{equation}

\section{Main results about  displacement interpolations}\label{sec-di}

We have defined the displacement interpolation $[\mu_0,\mu_1]$ as the marginal flow $\mu_t:=\Ph_t,$ $0\le t\le1,$ of the displacement random walk $\Ph.$
A direct consequence of Theorem \ref{res-0c} is the 
\begin{corollary}[The dynamics of $\mu$]\label{res-0d}
The Hypotheses \ref{hyp-00} are assumed to hold.
\\
The displacement interpolation $[\mu_0,\mu_1]$ solves the following evolution equation
\begin{equation*}
\left\{\begin{array}{ll}
\partial_t \mu_t(z)=\sum _{w}[\mu_t(w)\Jh _{t,w}(z)-\mu_t(z)\Jh _{t,z}(w)], & 0\le t\le 1, z\in\XX,\\
\mu_0,& t=0.
\end{array}
\right.
\end{equation*}
\end{corollary}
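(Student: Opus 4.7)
The plan is to recognize this as the forward Kolmogorov (master) equation for the Markov jump process $\Ph$, and to derive it via the standard martingale/Dynkin argument, using the two statements of Theorem \ref{res-0c} as the only non-trivial inputs.

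First, I would fix any function $u:\XX\to\RR$ with finite support and define $L_t u(z) := \IX [u(w)-u(z)]\,\Jh_{t,z}(dw)$. By Theorem \ref{res-0c}-(1), $\Ph$ is Markov, and by Theorem \ref{res-0c}-(2) its forward jump kernel is $\Jh_{t,z}$; hence the process $M^u_t := u(X_t) - \int_0^t L_s u(X_s)\,ds$ is a $\Ph$-local martingale. The finite support of $u$ and the boundedness of $J$ inherited along the representation \eqref{eq-63b} (the conditional expectations over $y$ of the ratios $g^y_t(w)/g^y_t(z)$ add up to a finite quantity, since $\Ph$ only jumps between neighbours and $\sup_{t,x}J_{t,x}(\XX)<\infty$ by \eqref{eq-01b}) will make $M^u$ a genuine martingale, so that taking $\Ph$-expectations yields
\begin{equation*}
\IX u\,d\mu_t - \IX u\,d\mu_0 = \int_0^t \IX L_s u(z)\,\mu_s(dz)\,ds,\qquad 0\le t\le 1.
\end{equation*}

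Second, I would specialize to $u=\1_{\{z\}}$ for an arbitrary $z\in\XX$. The integrand becomes
\begin{equation*}
L_s\1_{\{z\}}(\zeta)=\Jh_{s,\zeta}(z)-\1_{\{z\}}(\zeta)\,\Jh_{s,z}(\XX),
\end{equation*}
so, after integrating against $\mu_s(d\zeta)$ and applying Fubini (all terms are nonnegative, and the neighbourhood of $z$ is finite by \eqref{eq-03}),
\begin{equation*}
\mu_t(z)-\mu_0(z)=\int_0^t \sum_{w}\bigl[\mu_s(w)\Jh_{s,w}(z)-\mu_s(z)\Jh_{s,z}(w)\bigr]\,ds.
\end{equation*}
Differentiating in $t$ (the right-hand side is absolutely continuous in $t$) gives the stated master equation. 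The boundary condition $\mu_0$ at $t=0$ is precisely the marginal constraint imposed on $\Ph$ in \eqref{MKdyn}, so it comes for free from Theorem \ref{res-0a}-(3).

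The main obstacle I anticipate is the integrability of $\Jh_{s,z}(\XX)$ in the expression for $L_s u$. The formula \eqref{eq-63b} is a $\Ph(\,\cdot\mid X_t=z)$-mixture over $y$ of the bridge rates $\Jty_{s,z}(\XX)$, which are not uniformly bounded in $y$; however, once one integrates against $\mu_s(dz)$ this becomes the total instantaneous jump rate of $\Ph$ at time $s$, which is finite because $\Ph\ll \Rt\ll R$ and $R$ has uniformly bounded jump intensity by \eqref{eq-01b}. This is exactly what justifies the Fubini step and the passage from the integrated to the differential form of the equation.
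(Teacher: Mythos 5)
Your proposal is correct and follows the same route the paper intends: the Corollary is stated as a direct consequence of Theorem \ref{res-0c}, and the paper's own justification (at the end of Section \ref{sec-dyn}) is simply that this is ``the usual forward Fokker--Planck equation'' for the marginal flow of the Markov measure $\Ph$, which your Dynkin/martingale argument makes precise. One minor point: the integrability of the total jump rate is more cleanly justified by $E_{\Ph}\Iii \Jh_{t,X_t}(\XX)\,dt=E_{\Ph}(N)\le E_{\Ph}(\ell)=W_1(\mu_0,\mu_1)<\infty$ (using $d\ge1$), rather than by absolute continuity $\Ph\ll R$ alone, which does not by itself transfer the bound \eqref{eq-01b}.
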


\subsection*{A Benamou-Brenier type formula}

Let us have a closer look at the Benamou-Brenier type formula \eqref{eq-59b}.

\begin{definition}[Change of time]
A {change of time} $\tau$ is an \emph{absolutely continuous} function $\tau:\ii\to\ii$ such that  $\tau(0)=0$, $\tau(1)=1$  and with a nonnegative generalized derivative $0\le\dot\tau\in L^1(\ii).$
\end{definition}

For any change of time $\tau$ and any measure $Q\in\MO,$ we denote $$Q^\tau:=(X_\tau)\pf Q$$
where $X_\tau(t):=X _{\tau(t)},  t\in\ii.$
For any flow $\nu$ of probability measures and any jump kernel $\mathcal{J}$, we  denote $\nu \mathcal{J}_t(dxdy):=\nu_t(dx) \mathcal{J}_{t,x}(dy)$ and  $\nu \mathcal{J}_t(\XXX):=\IXX \nu_t(dx)\mathcal{J} _{t,x}(dy)$. 

The following theorem is a consequence of the Markov property of $\Ph$ which was stated at Theorem \ref{res-0c}.

\begin{theorem}[A Benamou-Brenier type formula]\label{res-0e}
Suppose that the Hypotheses \ref{hyp-00} are satisfied.
\begin{enumerate}
\item
We have
\begin{equation}\label{eq-59}
W_1(\mu_0,\mu_1)
= \inf _{\nu,\JJ}\Iii dt \IXX d(z,w)\,\nu\JJ _{t}(dzdw)<\infty 
\end{equation}
where the infimum is taken over all  couples $(\nu,\JJ)$ such that $\nu=(\nu_t)_{t\in\ii}\in \PX ^{\ii}$ is a time-differentiable  flow of probability measures on $\XX,$ $\JJ=(\JJ _{t,z})_{t\in\ii,z\in\XX}\in \MX ^{\ii\times\XX}$ is a measurable jump kernel, $\nu$ and $\JJ$ are linked by
\begin{equation}\label{eq-45}
\left\{
\begin{array}{l}
\partial_t \nu_t(z)+\IX [\nu_t(z)\JJ _{t,z}(dw)-\nu_t(dw)\JJ _{t,w}(z)]=0,\qquad 0<t<1,\ z\in\XX\\
\nu_0=\mu_0,\ \nu_1=\mu_1
\end{array}
\right.
\end{equation}
with $\IX \nu_t(z)\JJ _{t,z}(dw)<\infty$ for all $0<t<1,\ z\in\XX.$
\item
The infimum
 $\inf _{\nu,\JJ}$ in \eqref{eq-59} is attained at $(\mu,\Jh)$ where $\mu$ is  the displacement interpolation and $\Jh$ is the jump kernel of $\Ph$. Hence
\begin{equation*}
W_1(\mu_0,\mu_1)= \Iii dt \IXX d(z,w)\,\mu\Jh _{t}(dzdw).
\end{equation*}

\item
For any   change of time $\tau$,   the infimum
 $\inf _{\nu,\JJ}$ in \eqref{eq-59} is also attained at $(\mu^\tau,\Jh^\tau)$ where $\mu^\tau$ is  the displacement interpolation and $\Jh^\tau$ is the jump kernel that are  associated  with $\Ph^\tau.$ Moreover, $\Ph^\tau$  is the displacement random walk associated with $R^\tau,$ i.e.\  the analogue of $\Ph$ when $R$ is replaced  with $R^\tau$ 
 and
\begin{equation*}
W_1(\mu_0,\mu_1)= \Iii dt\IXX d(z,w)\,\mu\Jh^\tau _{t}(dzdw)
\end{equation*}
where for  almost every $t\in\ii$, $\mu\Jh_t^\tau:=\dot\tau(t)\mu\Jh _{\tau(t)}$  is the mass displacement distribution of $\Ph^\tau$ at time $t$.
\end{enumerate}
\end{theorem}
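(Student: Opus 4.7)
The plan is to verify that $(\mu,\Jh)$ is admissible in \eqref{eq-59} and attains the value $W_1(\mu_0,\mu_1)$, using the Markov structure of $\Ph$ recorded in Theorem~\ref{res-0c} and Corollary~\ref{res-0d}; and then to obtain the lower bound by lifting any admissible $(\nu,\JJ)$ to a path measure comparable with \eqref{MKdyn}. Part~(3) will then follow essentially from the pathwise invariance of the discrete length $\ell$ under any monotone reparameterization.

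First, Corollary~\ref{res-0d} is precisely the current equation \eqref{eq-45} for $(\mu,\Jh)$; the integrability requirement $\IX \mu_t(z)\Jh _{t,z}(dw)<\infty$ reduces, under the local finiteness \eqref{eq-03} and \eqref{eq-01b}, to the pointwise boundedness of the rate $\Jh _{t,z}(\XX)$, which holds by the explicit representation \eqref{eq-63b}. Since $\Ph$ is Markov with jump kernel $\Jh$, the L\'evy system (equivalently, Dynkin's formula applied to the additive functional $\st d(X _{t^-},X_t)$) yields
\begin{equation*}
E_\Ph(\ell)=E_\Ph\Bigl[\st d(X _{t^-},X_t)\Bigr]=\Iii dt\IXX d(z,w)\,\mu_t(dz)\Jh _{t,z}(dw).
\end{equation*}
By Theorem~\ref{res-0a}(3), $\Ph$ solves \eqref{MKdyn}, so $E_\Ph(\ell)=\inf \eqref{MKdyn}=\inf \eqref{MK}=W_1(\mu_0,\mu_1)<\infty$ thanks to \eqref{eq-97} and Hypothesis~\ref{hyp-00}-($\mu$). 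This establishes (2) and the upper bound in (1).

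For the lower bound, given any admissible $(\nu,\JJ)$ I construct a Markov measure $Q\in\PO$ with initial marginal $\mu_0$ and time-inhomogeneous jump kernel $\JJ$ via the standard martingale-problem procedure; the pointwise finiteness of the rates $\JJ _{t,z}(\XX)$, the finite time horizon and the local finiteness of $\sim$ ensure non-explosion and $Q\in\PO$. Uniqueness of the weak solution of \eqref{eq-45} with initial datum $\mu_0$ then forces the time-marginal flow of $Q$ to coincide with $\nu$; in particular $Q_1=\mu_1$, so $Q$ is admissible for \eqref{MKdyn}, and the L\'evy system applied to $Q$ gives $E_Q(\ell)=\Iii dt\IXX d(z,w)\,\nu_t(dz)\JJ _{t,z}(dw)\ge\inf \eqref{MKdyn}=W_1(\mu_0,\mu_1)$. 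For (3), the key observation is that $\ell(\omega\circ\tau)=\ell(\omega)$ for any change of time $\tau$, since $\ell$ only sees the ordered sequence of jumps; hence $E_{\Ph^\tau}(\ell)=W_1(\mu_0,\mu_1)$. The time-changed measure $\Ph^\tau$ is Markov with jump kernel $\Jh^\tau _{t,z}=\dot\tau(t)\Jh _{\tau(t),z}$ and marginal $\mu^\tau_t=\mu _{\tau(t)}$, so $(\mu^\tau,\Jh^\tau)$ satisfies \eqref{eq-45}, and the substitution $s=\tau(t)$ in the action integral shows that its value at $(\mu^\tau,\Jh^\tau)$ equals the value at $(\mu,\Jh)$, namely $W_1(\mu_0,\mu_1)$. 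The identification of $\Ph^\tau$ as the displacement random walk of $R^\tau$ reduces to checking that slowing-down and reparameterization commute at the level of generators, via \eqref{eq-85}--\eqref{eq-06b}, so that applying Theorem~\ref{res-0a} to $R^\tau$ produces $(\Ph^k)^\tau$ as the Schr\"odinger minimizers and $\Ph^\tau$ as their limit.

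The main obstacle I foresee is the rigorous construction and non-explosion of $Q$ from the sole data $(\nu,\JJ)$ in the lower-bound step: the current equation \eqref{eq-45} is only a weak form of the forward Kolmogorov equation, and one needs both that its solution with initial datum $\mu_0$ is unique and that it agrees with the actual marginal flow of the Markov process built from $\JJ$. A secondary subtlety is that when $\dot\tau$ vanishes on a set of positive measure the change of time is not invertible; flat pieces of $\tau$ produce no extra jumps in $\omega\circ\tau$, which preserves $\ell$, but $\Jh^\tau$ has to be interpreted as the zero kernel on those intervals and the identity $\mu^\tau_t=\mu _{\tau(t)}$ must be read in the weak sense of \eqref{eq-45}.
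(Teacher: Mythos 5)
Your treatment of the attainment and of the value of the infimum is the same as the paper's: combine $W_1(\mu_0,\mu_1)=\inf\eqref{MK}=\inf\eqref{MKdyn}=E_{\Ph}(\ell)$ (from \eqref{eq-97} and Theorem \ref{res-0a}-(3)) with the identity $E_P(\ell)=\Iii dt\IXX d(z,w)\,P_t(dz)\JJ_{t,z}(dw)$, valid for any Markov walk $P$ with jump kernel $\JJ$ and finite mean length, and note that the marginal flow of such a $P$ solves \eqref{eq-45}. Where you diverge is the lower bound: the paper simply identifies the admissible pairs $(\nu,\JJ)$ in \eqref{eq-59} with the Markov walks in $\PO$ satisfying the endpoint constraints and writes $W_1(\mu_0,\mu_1)=\inf\{E_P(\ell);\,P\ \textrm{Markov},\ P_0=\mu_0,P_1=\mu_1\}$; it does not carry out the reconstruction of a path measure $Q$ from an arbitrary admissible $(\nu,\JJ)$ that you rightly flag as the delicate point (non-explosion, and uniqueness of the weak forward equation so that $Q_t=\nu_t$). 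Your route is more scrupulous but, as you acknowledge, not closed; the paper's is shorter because it takes that correspondence for granted. For part (3), the paper's mechanism for identifying $\Ph^\tau$ with the displacement walk of $R^\tau$ is the entropy invariance $H(P|R^k)=H(P^\tau|R^{\tau,k})$, which follows from the injectivity of $X_\tau$ (a change of time is surjective onto $\ii$); this is the step your ``generators commute'' remark does not quite supply, since knowing $R^{\tau,k}=(R^k)^\tau$ does not by itself show that the minimizer of $H(\cdot|(R^k)^\tau)$ under the marginal constraints is $(\Ph^k)^\tau$ --- you need the entropy identity (or an equivalent bijection argument on path space) to transport minimizers through the time change. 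With that substitution, and accepting the paper's implicit identification in the lower bound, your argument coincides with the intended proof.
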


\begin{proof}
\boulette{(1) and (2)}
With \eqref{eq-97} and Theorem \ref{res-0a}-(3),  we have
\begin{equation*}
W_1(\mu_0,\mu_1)=\inf \eqref{MK}=\inf \eqref{MKdyn}=E _{\Ph}(\ell).
\end{equation*}
But,  for any Markov random walk $P\in\PO$ on $(\XX,\sim)$ with jump kernel $\JJ$ and such that $E_P(\ell)<\infty,$ we have
\begin{eqnarray*}
E_P(\ell)=E_P\IiX d(X_t,y)\, \JJ _{t,X_t}(dy)dt
=\Iii dt\IXX d(z,w)\,P_t(dz)\JJ _{t,z}(dw).
\end{eqnarray*}
This proves that $W_1(\mu_0,\mu_1)=\inf \left\{E_P(\ell);P \textrm{ Markov on }(\XX,\sim): P_0=\mu_0,P_1=\mu_1\right\}=E _{\Ph} (\ell),$ which is the announced result since $\nu_t:=P_t$ solves the Fokker-Planck equation in \eqref{eq-45}.

\Boulette{(3)}   
We denote $P^*$  the displacement random walk  associated to \eqref{Skdyn}$_{k\ge 2}$ with $R^\tau$ instead of $R$.
As $X_\tau$ is injective, we have $H(P|R^k)=H(P^\tau|R ^{\tau,k})$ for all $P\in\PO$ and $k\ge1$. This implies that   $P^*=\Ph^\tau$. Hence  (3) follows from (2).
\end{proof}

\subsection*{Constant speed and minimizing  displacement interpolations}

Next proposition is another consequence of the Markov property of $\Ph$.

\begin{proposition}\label{res-42}
For all $0\le s\le t\le1,$ let $\Ph _{st}:=(X_s,X_t)\pf \Ph\in\PXX$ be the joint law of the positions at time $s$ and $t$ under $\Ph.$ Then,
\begin{enumerate}
\item
$\Ph _{st}\in\PXX$ is an optimal coupling of $\mu_s$ and $\mu_t,$ meaning that $\Ph _{st}$ is a solution of \eqref{MK} with $\mu_s$ and $\mu_t$ as  prescribed marginal constraints;
\item
$W_1(\mu_s,\mu_t)=\int _{[s,t]}dr\IXX d(z,w)\,\mu\Jh_r(dzdw).$
\end{enumerate}
\end{proposition}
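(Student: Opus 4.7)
The plan hinges on the fact that under $\Ph$, almost every sample path is a geodesic between its endpoints. Indeed, the representation $\Ph(\cdot)=\IXX \Rt^{xy}(\cdot)\,\ph(dxdy)$ of Theorem \ref{res-0a}-(4), together with $\Rt = \1_\Gamma \exp(\cdots) R$ being concentrated on $\Gamma$ (so that each bridge $\Rt^{xy}$ charges only $\Gamma^{xy}$), gives $\Ph(X \in \Gamma^{X_0,X_1})=1$. Since $d$ is intrinsic (Hypothesis \ref{hyp-00}-($d$)), any subpath of a geodesic is itself a geodesic between its endpoints: for $\Ph$-a.e.\ $\omega$ and any $0\le s\le t\le 1$,
\begin{equation*}
\ell(\omega_{|[s,t]})=d(\omega_s,\omega_t).
\end{equation*}
To see this, note that $\ell$ is additive on a partition of $[0,1]$ into subintervals whose endpoints are not jump times of $\omega$ (which is a $\Ph$-full event for any fixed $s,t$, since the jump kernel of $R$ is uniformly bounded, see \eqref{eq-01b}). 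The triangle inequality on $d$, combined with $d(\omega_r,\omega_{r'})\le \ell(\omega_{|[r,r']})$ for every subinterval, then forces equality in each of the three terms of the decomposition $\ell(\omega_{|[0,s]})+\ell(\omega_{|[s,t]})+\ell(\omega_{|[t,1]})=\ell(\omega)=d(\omega_0,\omega_1)$.

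Next, the Markov property of $\Ph$ with jump kernel $\Jh$ (Theorem \ref{res-0c}) together with the compensation formula for the jump process — exactly as invoked in the proof of Theorem \ref{res-0e}-(2) — yields
\begin{equation*}
E_\Ph\bigl[\ell(X_{|[s,t]})\bigr]=\int_{[s,t]} dr \IXX d(z,w)\,\mu\Jh_r(dzdw).
\end{equation*}
Combining this with the geodesic property gives the key identity
\begin{equation*}
\IXX d(z,w)\,\Ph_{st}(dzdw) = E_\Ph\bigl[d(X_s,X_t)\bigr] = \int_{[s,t]} dr\IXX d(z,w)\,\mu\Jh_r(dzdw). \qquad (\ast)
\end{equation*}

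To close, I use the triangle inequality for $W_1$ on $\PuX$. Since $\Ph_{st}$ is a coupling of $\mu_s$ and $\mu_t$, $(\ast)$ already yields the upper bound $W_1(\mu_s,\mu_t)\le \int_{[s,t]}\IXX d\,\mu\Jh_r\,dr$. Applied to the three subintervals $[0,s],[s,t],[t,1]$ and summed via the triangle inequality,
\begin{equation*}
W_1(\mu_0,\mu_1)\le W_1(\mu_0,\mu_s)+W_1(\mu_s,\mu_t)+W_1(\mu_t,\mu_1)\le \int_0^1 dr\IXX d(z,w)\,\mu\Jh_r(dzdw),
\end{equation*}
and the right-hand side equals $\IXX d\,d\Ph_{01}= W_1(\mu_0,\mu_1)$ by $(\ast)$ with $s=0,t=1$ and the fact that $\Ph_{01}=\ph$ solves \eqref{MK} (Theorem \ref{res-0a}-(2,3)). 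Hence equality holds throughout, giving statement (2) and identifying $\Ph_{st}$ as an optimal coupling of $\mu_s,\mu_t$, which is statement (1).

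The main technical obstacle I expect is the rigorous justification of the subpath-geodesic property on the negligible set of paths with a jump at the prescribed times $s$ or $t$; this is handled by observing that under $\Ph$ (which is dominated by $\Rt$, itself absolutely continuous with respect to $R$) no fixed time $s\in\ii$ is a jump time almost surely, thanks to Hypothesis \ref{hyp-00}-($R$).
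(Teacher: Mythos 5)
Your proof is correct, but it takes a genuinely different route from the paper's. The paper argues by contradiction using surgery: if $\Ph _{st}$ were not an optimal coupling, the Markov property of $\Ph$ would allow one to glue a time-rescaled dynamical realization of a strictly better coupling of $\mu_s,\mu_t$ between the restrictions $\Ph _{[0,s]}$ and $\Ph _{[t,1]}$, producing a competitor for \eqref{MKdyn} with strictly smaller expected length than $\Ph$ --- a contradiction; statement (2) is then deduced from (1) by a change of variables together with the invariance of $\ell _{st}$ under changes of time. You instead run a direct sandwich argument: the a.s.\ geodesic property of $\Ph$-paths (so that $d(X_s,X_t)=\ell(X _{|[s,t]})$, $\Ph$-a.s.), the compensation formula for the Markov walk $\Ph$, and the triangle inequality for $W_1$ squeeze the chain
$W_1(\mu_0,\mu_1)\le W_1(\mu_0,\mu_s)+W_1(\mu_s,\mu_t)+W_1(\mu_t,\mu_1)\le E _{\Ph}(\ell)=W_1(\mu_0,\mu_1)$
into equalities, which yields (1) and (2) simultaneously. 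What each approach buys: the paper's surgery argument needs only the inequality $d(X_s,X_t)\le \ell _{st}$ and localizes the reasoning to the middle interval, but its rigor rests on actually constructing and verifying the glued competitor (realizing an arbitrary optimal coupling of $\mu_s,\mu_t$ as a path measure on $[s,t]$ of the right expected length); your argument avoids any competitor construction, at the price of invoking that $\Ph$ is concentrated on geodesics --- which is indeed available from Theorem \ref{res-0a}-(3) and Lemma \ref{res-07}, or directly from \eqref{eq-22} and \eqref{eq-35} --- and of the small measurability point about jumps occurring exactly at the fixed times $s,t$, which you correctly dispose of via $\Ph\ll R$ and the bounded intensity \eqref{eq-01b}.
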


\begin{proof}
Both statements are  consequences of 
\begin{itemize}
\item
the Markov property of $\Ph,$ see Theorem \ref{res-0c}, which allows for surgery by gluing the bridges of $\Ph _{[s,t]}$ together with the restrictions $\Ph _{[0,s]}$ and $\Ph _{[t,1]}$, where we denote $P _{[u,v]}:=(X_t;u\le t\le v)\pf P$;
\item
the fact that $\ell _{st}:=\sum _{s<r<t}d(X _{r^-},X_r)$ is insensitive to  changes of time: i.e.\ for any strictly increasing mapping $\theta:[s,t]\to\ii$ with $\theta(s)=0,$ $\theta(t)=1,$ we have $\ell _{st}=\ell _{01}(X_\theta).$
\end{itemize}
A standard ad absurdum reasoning leads to (1).
Statement (2) follows from (1), a change of variables formula based on any absolutely continuous change of time $\theta:[s,t]\to\ii$ and the general identity
\begin{equation}\label{eq-41}
\mathcal{J}^{\theta}_r=\dot\theta(r) \mathcal{J}_{\theta(r)},\quad \textrm{for almost every }\ r\in (s,t)
\end{equation}
where $(\mathcal{J}_u; u\in\ii)$ is any jump kernel and $(\mathcal{J}^{\theta}_r; r\in[s,t])$ the jump kernel resulting from the mapping $X _{\theta}.$
\end{proof}

Proposition \ref{res-42}-(2) entitles us to define the speed of the displacement interpolation $\mu$ at time $t$ by
\begin{equation}\label{eq-61}
\mathrm{speed}(\mu)_t:=\IXX d(z,w)\, \mu\Jh_t(dzdw),\quad 0\le t\le1,
\end{equation}
to obtain
\begin{equation*}
W_1(\mu_0,\mu_1)=\Iii \mathrm{speed}(\mu)_t\,dt.
\end{equation*}
For any change of time $\tau:\ii\to\ii,$ we see with this identity,  \eqref{eq-41} and the change of variable formula that
\begin{equation*}
W_1(\mu ^\tau_0,\mu^\tau_1)=W_1(\mu_0,\mu_1).
\end{equation*}
Hence, there are infinitely many $\mu ^{\tau}$ that minimize the action in formula \eqref{eq-59}. This is the content of Theorem \ref{res-0e}-(3). On the other hand, next result states that only  one of them has a constant speed.

\begin{proposition}
Under the Hypotheses \ref{hyp-00},
there exists a unique change of time $\tau_o$ such that $\mu ^{\tau_o}$ has a constant speed, i.e.
\begin{equation*}
W_1(\mu ^{\tau_o}_s,\mu_t ^{\tau_o})=(t-s)W_1(\mu_0,\mu_1),\quad \forall 0\le s\le t\le1.
\end{equation*}
\end{proposition}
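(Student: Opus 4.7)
The plan is to translate the constant-speed condition into an ODE for $\tau_o$ and solve it by inverting the cumulative speed. Write $L := W_1(\mu_0,\mu_1)$ and $s(r) := \mathrm{speed}(\mu)_r$ from \eqref{eq-61}, and assume $L > 0$; the degenerate case $L = 0$ means $\mu$ is constant and every change of time trivially yields constant speed zero. By Theorem~\ref{res-0e}-(3) combined with \eqref{eq-41}, any change of time $\tau$ verifies $\mathrm{speed}(\mu^\tau)_t = \dot\tau(t)\,s(\tau(t))$ almost everywhere, and since $\int_0^1 \mathrm{speed}(\mu^\tau)_t\,dt = L$, the constant-speed requirement on $\mu^{\tau_o}$ reads
\[
\dot\tau_o(t)\, s(\tau_o(t)) = L\quad\text{for a.e.\ } t\in [0,1].
\]

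Introduce the cumulative speed $\sigma(u) := \int_0^u s(r)\,dr$, which is absolutely continuous, nondecreasing, with $\sigma(0)=0$ and $\sigma(1)=L$. Integrating the above ODE with $\tau_o(0)=0$ rephrases the problem as finding an absolutely continuous $\tau_o:[0,1]\to[0,1]$ such that $\sigma(\tau_o(t)) = Lt$ for every $t\in[0,1]$. The key analytic step is to show $s > 0$ Lebesgue-a.e.: by \eqref{eq-63b} and Hypothesis~\ref{hyp-00}-($R$), $\Jh_{t,z}(\XX)$ vanishes only when $\Ph(X_1 = z\mid X_t = z) = 1$; since $\Ph$-sample paths are geodesics (Theorem~\ref{res-0a}-(4)), this forces $X_r = z$ for all $r\in[t,1]$, so Markovianity (Theorem~\ref{res-0c}) implies that the zero-set of $s$ is of the form $[T,1]$; the case $T<1$ would say that $\Ph$-paths all reach their final positions strictly before time $1$, which is incompatible with the spreading of jump times in the convergence of bridges stated in Theorem~\ref{res-11}. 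Hence $\sigma$ is strictly increasing with $\sigma' = s > 0$ a.e., and a standard argument (Luzin's (N) property combined with $s > 0$ a.e.) shows that $\sigma^{-1}:[0,L]\to[0,1]$ is absolutely continuous.

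Existence then follows from $\tau_o(t) := \sigma^{-1}(Lt)$: this is a bona fide change of time, differentiating $\sigma(\tau_o(t)) = Lt$ recovers the ODE, and Proposition~\ref{res-42}-(2) closes with $W_1(\mu^{\tau_o}_s,\mu^{\tau_o}_t) = (t-s)L$. Uniqueness is immediate: any $\tau_o$ obeying the ODE satisfies $\sigma\circ\tau_o(t)=Lt$ after integration, and strict monotonicity of $\sigma$ pins $\tau_o(t)$ down to $\sigma^{-1}(Lt)$. The main obstacle I foresee is the positivity $s > 0$ a.e.; it is intuitively clear but requires ruling out premature mass extinction of the limiting random walk $\Ph$, the remaining steps being an elementary change-of-variable computation.
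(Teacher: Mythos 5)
Your proof follows the same route as the paper's: reduce the constant-speed requirement to the ODE $\dot\tau(t)\,\mathrm{speed}(\mu)_{\tau(t)}=W_1(\mu_0,\mu_1)$ a.e., invert the cumulative speed $\Psi(t)=\int_0^t\mathrm{speed}(\mu)_r\,dr$ to produce $\tau_o$, and get uniqueness from the strict monotonicity of $\Psi$. The one loose point is your justification of $\mathrm{speed}(\mu)>0$ via Theorem \ref{res-11} (``spreading of jump times'' is not what that theorem states; the direct argument is that for $x\ne y$ and $t<1$ the bridge $G^{xy}$ charges geodesics whose first jump occurs after time $t$, so $G^{xy}(X_t=y)<1$), but the paper itself treats this positivity as evident from $d\ge1$ and \eqref{eq-63b}, so your attempt to argue it is, if anything, more scrupulous.
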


\begin{proof}
Indeed, this equation is equivalent to 
\begin{equation}\label{eq-64}
\dot\tau(s) \psi(\tau(s))=W_1(\mu_0,\mu_1),\quad \textrm{a.e.},
\end{equation}
where 
$	
\psi(t):=\IXX d(z,w)\,\mu\Jh _{t}(dzdw),$ a.e.	
 Clearly, the assumption that $d$ is uniformly lower bounded and \eqref{eq-63b} imply that $\psi>0.$ Hence, a solution of \eqref{eq-64} is given by
\begin{equation}\label{eq-95}
\tau_o(s)=\Psi _{\mu_0,\mu_1} ^{-1}(W_1(\mu_0,\mu_1) \,s),\quad s\in\ii
\end{equation}
where for all $t\in\ii,$
\begin{equation*}
0\le \Psi _{\mu_0,\mu_1}(t):=\int _{[0,t]} \psi(r)\,dr=\int _{[0,t]}dr\int_{\XXX} d(z,w)\,\mu\Jh _{r}(dzdw)\le W_1(\mu_0,\mu_1)<\infty.
\end{equation*}
Let us prove the uniqueness. Remark that, as a continuous strictly monotone function, $\tau_o$ is bijective. In addition, it is absolutely continuous. Hence, any change of time $\tau$ is equal to $\tau_o\circ \sigma$ for some change of time $\sigma.$ Now, instead of starting from $\mu,$ let us do a change of time $\sigma$ on $\mu ^{\tau_o}.$ Defining $\psi_o(u):=\IXX d(z,w)\,\mu\Jh ^{\tau_o} _{u}(dzdw),$ a.e.\ instead of $\psi,$ we arrive similarly at $\dot \sigma(u) \psi_o(\sigma(u))=W_1(\mu_0,\mu_1),$ a.e. But, $\psi_o(u)=W_1(\mu_0,\mu_1)$ for all $u$. Hence, $\dot \sigma=1,$ from which the desired result follows.
\end{proof}

\begin{definition}[Constant speed displacement interpolation]\label{def-06}
The time changed displacement interpolation $\mu ^{\tau_o}$ with $\tau_o$ given at \eqref{eq-95} is called a constant speed displacement interpolation.
\end{definition}
As a definition, a constant speed displacement interpolation is a minimizing geodesic between $\mu_0$ and $\mu_1$ in the Wasserstein space $(\PuX,W_1)$. But the general definition of a minimizing geodesic  is misleading in the present non-strictly convex setting, since there are infinitely many \emph{action minimizing} $W_1$-geodesics which do not have a constant speed.

One must be aware that, in general, the change of time $\tau_o$ depends on $\mu_0$ and $\mu_1$. Nevertheless, we shall see below at Theorem \ref{res-0f} that in the special important case where the distance $d$ is the standard graph distance $d_\sim$ specified by
\begin{equation}\label{eq-96}
\forall x,y\in\XX,\quad d_\sim(x,y)=1 \iff x\sim y,
\end{equation}
for any $\mu_0,\mu_1,$ the displacement interpolation $[\mu_0,\mu_1]$ has a constant speed.

Let us go back to a general discrete metric measure  graph  $(\XX,\sim,d,m)$ where $d$ might differ from the standard graph distance $d_\sim.$
When transferring analogically the Lott-Sturm-Villani theory to this graph setting, it is likely that one should consider a reference random walk $R$ that satisfies the detailed balance conditions \eqref{eq-94}:
\begin{equation*}
m_xJ _{t,x}(y)=m_y J _{t,y}(x),\quad \forall x,y\in\XX, t\in\ii,
\end{equation*}
 for being respectful of the metric measure  graph structure. Indeed, this implies that  for all $0\le t\le1$ and $k\ge1,$ $R_t^k=m$ and also that the generators $L_t^k$, see \eqref{eq-85}, are self-adjoint on $L^2(\XX,m).$
\\ 
Pick $\mu_0$ and $\mu_1$ and consider the associated constant speed interpolation $\mu ^{\tau_o}.$ Since, for any change of time $\tau,$ the mapping $Q\in\MO\mapsto Q ^{\tau}\in\MO$ is injective, we have $H(P ^{\tau}|R ^{\tau})=H(P|R)$ for any $P\in\PO.$ This implies that $\mu ^{\tau_o}$ is the displacement interpolation associated with $R ^{\tau_o}:$
\begin{equation*}
\mu ^{\tau_o}=[\mu_0,\mu_1]^{R}\circ \tau_o=[\mu_0,\mu_1]^{R ^{\tau_o}}.
\end{equation*}
But, since $J^\tau_t=\dot \tau(t)J _{\tau(t)},$ $R ^{\tau_o}$ inherits of the detailed balance condition satisfied by $R:$
\begin{equation*}
m_xJ ^{\tau_o} _{t,x}(y)=m_y J ^{\tau_o}_{t,y}(x),\quad \forall x,y\in\XX, t\in\ii.
\end{equation*}
Therefore, the constant speed displacement interpolation $\mu ^{\tau_o}$ is really a displacement interpolation in the sense of Definition \ref{def-03} with respect to the reference random walk $R ^{\tau_o}$ which is respectful of the discrete metric measure graph $(\XX,\sim,d,m).$

\subsection*{Conservation of the average rate of mass displacement}

Next result tells us that along any displacement interpolation $[\mu_0,\mu_1],$ the average rate of mass displacement, as defined below, doesn't depend on time.

\begin{definitions}[Rate of mass displacement]\label{def-05}
For any Markov random walk $P\in\PO$ with jump kernel $(\JJ _{t,x};t\in\ii, x\in\XX)$, we denote $\nu_t=P_t\in\PX$ and call
\begin{equation*}
\nu\JJ_t(dxdy):= \nu_t(dx)\JJ _{t,x}(dy),\qquad 0\le t\le1
\end{equation*} 
the \emph{distribution of the rate of  mass displacement}  of $P$ at time $t$.
\\
We also call
\begin{equation*}
\nu\JJ_t(\XXX):=\IXX \nu_t(dx)\JJ _{t,x}(dy),\qquad 0\le t\le1
\end{equation*} 
the \emph{average rate of mass displacement} of $P$ at time $t$.
\end{definitions}

\begin{theorem}[Conservation of the average rate of mass displacement]\label{res-0f}
Suppose that the Hypotheses \ref{hyp-00} are satisfied. Let   $\Jh$ be the  jump kernel of the displacement random walk $\Ph$ and $\mu$ the corresponding displacement interpolation. There exists some $K>0$ such that
\begin{equation*}
\mu\Jh_t(\XXX)=K,\quad \forall t\in\ii.
\end{equation*}
In particular, when the distance $d$ is the standard discrete distance $d _{\sim}$, see \eqref{eq-96}, the displacement interpolation $\mu$ has a constant speed.
\end{theorem}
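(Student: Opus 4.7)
The plan is to disintegrate the total intensity $\mu\Jh_t(\XXX)$ over the optimal endpoint coupling $\ph$ and to reduce the claim to the affine-in-$t$ dependence of $E_{\Rt^{xy}}[N_t]$, where $N_t$ denotes the number of jumps of the canonical process on $[0,t]$. By Theorem~\ref{res-0a}-(4), $\Ph=\IXX\Rt^{xy}\,\ph(dxdy)$, and by Theorem~\ref{res-0c}-(2), $\Jh_{t,z}(\cdot)=\IX \Jty_{t,z}(\cdot)\,\Ph(X_1\in dy\mid X_t=z)$; integrating against $\mu_t$ gives
\begin{equation*}
\mu\Jh_t(\XXX)=E_\Ph[\Jh_{t,X_t}(\XX)]=\IXX E_{\Rt^{xy}}\!\big[\Jty_{t,X_t}(\XX)\big]\,\ph(dxdy),
\end{equation*}
so it suffices to show, for $\ph$-a.e.\ $(x,y)$, that $t\mapsto E_{\Rt^{xy}}[\Jty_{t,X_t}(\XX)]$ is constant. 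Since $\Rt^{xy}$ is Markov (Theorem~\ref{res-0b}-(1)), $N_t-\int_0^t \Jty_{s,X_s}(\XX)\,ds$ is a $\Rt^{xy}$-martingale, whence $E_{\Rt^{xy}}[N_t]=\int_0^t E_{\Rt^{xy}}[\Jty_{s,X_s}(\XX)]\,ds$, and constancy of the integrand is equivalent to $E_{\Rt^{xy}}[N_t]=t\,E_{\Rt^{xy}}[N_1]$.

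The crux is the cancellation encoded in \eqref{eq-35}. Parametrize a path $\omega\in\Gxy$ by its sequence of visited vertices $x=x_0,x_1,\ldots,x_n=y$ (a geodesic chain) together with its jump times $0<t_1<\cdots<t_n<1$. The standard path density of $R$ with respect to Lebesgue on the simplex times counting on sequences equals $\prod_{i=1}^n J_{t_i,x_{i-1}}(x_i)\exp(-\int_0^1 J_{s,X_s}(\XX)\,ds)$, and the reweighting $\mathbf{1}_\Gamma\exp(\int_0^1 J_{s,X_s}(\XX)\,ds)$ in the definition of $\Rt$ precisely kills the survival factor. When the jump kernel is time-homogeneous, the remaining density $\prod_i J_{x_{i-1}}(x_i)$ is independent of the jump times, so conditionally on the visited sequence the jump times under $\Rt^{xy}$ are the order statistics of $n$ i.i.d.\ uniforms on $[0,1]$; the classical identity $E[N_t\mid\text{sequence}]=nt$ and averaging over sequences yield $E_{\Rt^{xy}}[N_t]=t\,E_{\Rt^{xy}}[N_1]$, as required.

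Putting the pieces together, $\mu\Jh_t(\XXX)=\IXX E_{\Rt^{xy}}[N_1]\,\ph(dxdy)=E_\Ph[N_1]=:K$, a positive constant independent of $t$. For the last assertion, when $d=d_\sim$ every jump of a geodesic has length one, so $N_1(\omega)=\ell(\omega)=d(X_0,X_1)(\omega)$ $\Ph$-a.s., whence $K=E_\ph[d]=W_1(\mu_0,\mu_1)$; moreover the speed $\IXX d(z,w)\,\mu\Jh_t(dzdw)$ coincides with $\mu\Jh_t(\XXX)=K$, so $\mu$ has constant speed $W_1(\mu_0,\mu_1)$. The delicate point is justifying the survival-factor cancellation at the level of conditioned path laws: $\Rt$ is an unbounded positive measure, so the ``density'' must be interpreted carefully and only becomes a bona fide probability density after restriction to $\Gxy$ and normalization into $\Rt^{xy}$.
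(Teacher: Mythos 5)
Your argument is correct for a \emph{time-homogeneous} reference kernel $J$, and it is genuinely different from the paper's proof. The paper proceeds variationally: it first proves a general lemma (Proposition \ref{res-41}) stating that the optimal time-reparametrization $\widehat\tau$ of any finite-entropy walk, i.e.\ the minimizer of $\tau\mapsto H(P^\tau|R)$, satisfies the conservation law $\nu\JJ^{\widehat\tau}_t(\XXX)-\nu J_{\widehat\tau(t)}(\XXX)=K$; it then applies this to the Schr\"odinger minimizers $\Ph^k$ (for which the identity is the unique optimal time change, by uniqueness of the minimizer of \eqref{Skdyn}), obtaining $\mu^k\Jh^k_t(\XXX)-\mu^kJ^k_t(\XXX)=K_k$; finally it passes to the limit $k\to\infty$, which requires strengthening the $\Gamma$-convergence of Lemma \ref{res-03} to accommodate the unbounded test functions $N_t$, and uses $\sup_t\mu^kJ^k_t(\XXX)\le\bar J/k\to0$ to kill the reference term. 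Your route works directly on the limit object: the disintegration $\mu\Jh_t(\XXX)=\IXX E_{\Rt^{xy}}[\Jty_{t,X_t}(\XX)]\,\ph(dxdy)$ is legitimate (Theorems \ref{res-0a}-(4) and \ref{res-0c}-(2)), the compensator identity $E_{\Rt^{xy}}[N_t]=\int_0^tE_{\Rt^{xy}}[\Jty_{s,X_s}(\XX)]\,ds$ is justified since $N\le\ell=d(x,y)$ under $\Rt^{xy}$, and the survival-factor cancellation in \eqref{eq-35} does show that, given the visited chain, the jump times under $\Rt^{xy}$ are uniform order statistics when $J$ does not depend on $t$. This is shorter, avoids both the convex-duality lemma and the delicate limit interchange, and yields the extra identification $K=E_{\Ph}[N]$ (equal to $W_1(\mu_0,\mu_1)$ when $d=d_\sim$). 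The only (shared, minor) caveat is that differentiating $E_{\Rt^{xy}}[N_t]=tE_{\Rt^{xy}}[N_1]$ gives constancy of the integrand a priori only for a.e.\ $t$; a word on continuity of $t\mapsto E_{\Rt^{xy}}[\Jty_{t,X_t}(\XX)]$ is needed to get every $t$, and $K>0$ of course requires $\mu_0\ne\mu_1$.

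The time-homogeneity restriction you flag at the crux is not a defect of your proof relative to the theorem's true scope: the statement actually \emph{fails} for genuinely time-dependent $J$. Take $\XX=\{0,1\}$ with $0\sim1$, $d(0,1)=1$, $J_{t,0}(1)=J_{t,1}(0)=\varphi(t)$ non-constant, $\mu_0=\delta_0$, $\mu_1=\delta_1$. Then $\Ph=\Rt^{01}$ charges only one-jump paths, the cancellation in \eqref{eq-35} leaves jump-time density $\varphi(t_1)/\int_0^1\varphi$, and one computes $\mu_t(0)=\int_t^1\varphi/\int_0^1\varphi$ and $\Jty[1]_{t,0}(1)=\varphi(t)/\int_t^1\varphi$, whence $\mu\Jh_t(\XXX)=\varphi(t)/\int_0^1\varphi$, which is not constant. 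The paper's own proof harbours the same implicit assumption: the identity $H(P^\tau|R)=H(P_0|m)+\Iii L(\qq_{\tau(t)},\dot\tau(t)\vv_{\tau(t)})\,dt$ used in Proposition \ref{res-41} replaces the reference kernel $J_{t,x}$ by $J_{\tau(t),x}$ inside the integrand, which is only valid when $J$ is independent of $t$. So your proof covers exactly the cases in which the conclusion holds; it would be worth stating the time-homogeneity hypothesis explicitly rather than burying it mid-argument.
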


Last statement simply relies on the remark that when $d=d_\sim,$ the speed of $\mu$ coincides with its average rate of mass displacement.

Theorem \ref{res-0f} is a restatement of Theorem \ref{res-0fb} which is proved at Section \ref{sec-conservation}.

\begin{corollary}
The constant speed displacement interpolation $\mu ^{\tau^o}$ defined at Definition \ref{def-06} has also a constant average rate of mass displacement.
\end{corollary}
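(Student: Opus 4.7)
The plan is to view $\mu^{\tau_o}$ not as a time-reparametrization of the displacement interpolation $\mu = [\mu_0,\mu_1]^R$, but as a displacement interpolation in its own right with respect to a different reference walk, and then invoke Theorem \ref{res-0f} directly. The discussion preceding Definition \ref{def-06} already records the key identity
\begin{equation*}
\mu^{\tau_o} = [\mu_0,\mu_1]^{R^{\tau_o}},
\end{equation*}
obtained from the fact that $Q \mapsto Q^{\tau_o}$ is injective on $\MO$ and preserves relative entropy, so the entropy-minimization problem \eqref{Stdyn} for the reference $R^{\tau_o}$ is solved by $\Ph^{\tau_o}$, whose time marginals give $\mu^{\tau_o}$.

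The main (indeed the only) step to check is that Theorem \ref{res-0f} applies to the new reference measure $R^{\tau_o}$, i.e.\ that $R^{\tau_o}$ satisfies the standing Hypotheses \ref{hyp-00}. The graph-theoretic items $(\sim)$ and $(d)$ are unchanged, and the initial measure $m$ is unchanged. The hypothesis $(R)$ concerns the forward jump kernel, which by \eqref{eq-41} is $J^{\tau_o}_{t,x} = \dot\tau_o(t)\, J_{\tau_o(t),x}$. Positivity of $J^{\tau_o}_{t,x}(y)$ when $x\sim y$ follows from $\dot\tau_o>0$ (Lebesgue-almost everywhere), which in turn follows from \eqref{eq-64}. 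For the uniform bound \eqref{eq-01b} one needs $\dot\tau_o \in L^\infty(\ii)$; from \eqref{eq-64} one has $\dot\tau_o(s) = W_1(\mu_0,\mu_1)/\psi(\tau_o(s))$, and since $d \ge 1$ (Hypothesis \ref{hyp-00}-($d$)) one has
\begin{equation*}
\psi(t) = \IXX d(z,w)\,\mu\Jh_t(dzdw) \ge \mu\Jh_t(\XXX) = K > 0
\end{equation*}
by Theorem \ref{res-0f} applied to $\mu$, giving the uniform bound $\dot\tau_o \le W_1(\mu_0,\mu_1)/K$. Thus $\sup J^{\tau_o}_{t,x}(\XX) \le (W_1/K)\sup J_{t,x}(\XX)<\infty$. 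Hypothesis $(\mu)$ for $\mu_0,\mu_1$ is unchanged since the endpoint marginals are preserved by a change of time.

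With Hypotheses \ref{hyp-00} verified for $R^{\tau_o}$, Theorem \ref{res-0f} applied to the displacement interpolation $\mu^{\tau_o} = [\mu_0,\mu_1]^{R^{\tau_o}}$ yields a constant $K' > 0$ such that
\begin{equation*}
\mu^{\tau_o}\,\Jh^{\tau_o}_t(\XXX) = K', \qquad \forall t \in \ii,
\end{equation*}
which is precisely the claim. The main (and essentially only) potential obstacle is the verification that the reparametrized reference walk still satisfies Hypotheses \ref{hyp-00}; all other ingredients are already available in the excerpt and require no further work.
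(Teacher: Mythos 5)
Your proposal is correct and follows exactly the paper's own route: the paper's proof is the one-line observation that $\mu^{\tau_o}=[\mu_0,\mu_1]^{R^{\tau_o}}$ is itself an $R^{\tau_o}$-displacement interpolation, so Theorem \ref{res-0f} applies. Your additional verification that $R^{\tau_o}$ still satisfies Hypotheses \ref{hyp-00} (in particular the uniform bound on $\dot\tau_o$ via $\psi\ge K>0$) is a worthwhile detail that the paper leaves implicit.
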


\begin{proof}
Since  $\mu ^{\tau^o}=[\mu_0,\mu_1] ^{R ^{\tau^o}}$ is the $R ^{\tau^o}$-displacement interpolation, one can apply Theorem \ref{res-0f}.
\end{proof}

\subsection*{Natural substitutes for the constant speed geodesics on a discrete metric graph}

Let $R$ be given. When specifying $\mu_0=\delta_x$ and $\mu_1=\delta_y,$ the displacement random walk $\Ph$ is simply $\Rt ^{xy}.$ Moreover, there exists a unique change of time $\tau ^{xy}$ such that
\begin{equation*}
\mu ^{xy}:=[\delta_x,\delta_y]\circ \tau ^{xy}=\Rt ^{xy}_{\tau ^{xy}}
\end{equation*}
has a constant speed. Its dynamics is given by the current equation
\begin{equation*}
\left\{\begin{array}{ll}
\partial_t \mu ^{xy}_t(z)-{\dot\tau ^{xy}_t}\sum _{w}[\mu ^{xy}_t(w)\Jty _{\tau ^{xy}_t,w}(z)-\mu ^{xy}_t(z)\Jty _{\tau ^{xy}_t,z}(w)]=0, & 0\le t\le 1, z\in\XX,\\
\mu ^{xy}_0=\delta_x,& t=0.
\end{array}
\right.
\end{equation*}
The $(R,d)$-displacement interpolation $\mu ^{xy}$ is a natural time-continuous averaging of the piecewise constant paths $t\mapsto \delta _{\gamma(t)}$ with $\gamma$ in the set  $\Gxy$ of all  $d$-geodesics joining $x$ and $y$. It depends on the choice of the reference random walk $R.$

\subsection*{Binomial interpolations}

We present some easy examples of constant speed  interpolations $\mu ^{xy}.$
Let us consider the simplest and important setting where $R$ is the reversible simple random walk and  the distance $d=d_\sim$ is the standard graph distance.
 The jump kernel is described at \eqref{eq-98a}: $J_x(y)=1/n_x,$ $\forall x\sim y,$ and with the initial ``volume measure'' given at \eqref{eq-98b}: $m_x=n_x,$ $\forall x\in\XX$. 
\\
Let $x$ and $y$ be fixed . We know by Theorem  \ref{res-0f} that the displacement interpolation $\mu ^{xy}=[\delta_x,\delta_y]=(G ^{xy}_t) _{0\le t\le1}$ has a constant speed. The dynamics of $G ^{xy}$ is specified at Theorem \ref{res-0b}, for any $t\in[0,1),$ $z\in \Gamma ^{xy}(\ii)$ and $w\in \left\{z\to\cdot\right\}^y,$ by 
\begin{equation*}
J ^{G,y}_{t,z}(w)=\1 _{\left\{z\ne y\right\} }n_z ^{-1}\frac{g_t^y(w)}{g_t^y(z)}=\1 _{\left\{z\ne y\right\} }n_z ^{-1}\frac{R(\Gamma(t,w;1,y)\mid X_t=w)}{R(\Gamma(t,z;1,y)\mid X_t=z)}
\end{equation*}

\subsubsection*{The complete graph} Let $\XX=\left\{1,\dots,n\right\} $ with $x\sim y$ for all $x\ne y\in\XX.$ Then, for all $x\ne y$ and all $0\le t<1,$ we have $J ^{G,y}_{t,x}(y)=1/(1-t)$ and the probability that no jump occurred before time $t$ is $\mathrm{Proba}(N _{\lambda(t)}=0)$ where $N_\lambda$ denotes a random variable distributed according to Poisson($\lambda$) and $\lambda(t)=\int_0 ^{t}\frac{1}{1-s}\,ds=-\log(1-t). $ Therefore, $\mathrm{Proba}(N _{\lambda(t)}=0)=\exp(-\lambda(t))=1-t$ and $$G ^{xy}_t=(1-t)\delta_x+t \delta_y=\sum _{z\in \left\{x,y\right\} } t ^{d(x,z)} (1-t)^{d(z,y)}\, \delta_z,$$ for any $0\le t\le1.$ This law is in one-one correspondence with the Bernoulli  law $\mathcal{B}(t)$ which  is the specific binomial law $\mathcal{B}(2,t).$

\subsubsection*{The graph $\mathbb{Z}$}

We  consider the simple situation where $(\XX,\sim)$ is the set of integers $\mathbb{Z}$ with its natural graph structure. The reference random walk $R$ is the simple walk with $J_z=(\delta _{z-1}+\delta _{z+1})/2,$ $z\in \mathbb{Z},$ and the counting measure as its initial measure. Take $x<y\in \mathbb{Z}.$ Then, for any $0\le t<1$ and $x\le z <y,$ denoting $N _{1-t}$ a random variable distributed according to the Poisson($1-t$) law and $\delta=d(z,y)=y-z$, we obtain
\begin{eqnarray*}
J ^{G,y}_{t,z}(z+1)&=& \frac{1}{2}\frac{\mathrm{Proba}(N _{1-t}=d(z+1,y))(1/2)^{d(z+1,y)}}{\mathrm{Proba}(N _{1-t}=d(z,y))(1/2)^{d(z,y)}}\\
	&=&\frac{1}{2}\frac{e ^{1-t}2^{-(\delta-1)}(1-t)^{\delta-1}/(\delta-1)!}{e ^{1-t}2^{-\delta}(1-t) ^{\delta}/\delta!}
=d(z,y)/(1-t).
\end{eqnarray*}

\begin{claim}\label{res-45}
This proves that $G ^{xy}$ has the same law as $x+\widetilde N$ where $\widetilde N$ is the bridge of a Poisson process $(N_t) _{0\le t\le 1},$ that is $\mathrm{Proba}(G ^{xy}\in\cdot)=\mathrm{Proba}(N\in \left\{\cdot\right\} -x\mid N_1=d(x,y)).$ 
\end{claim}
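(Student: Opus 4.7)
The plan is to verify that $G^{xy}$ and $x+\widetilde{N}$ are Markov jump processes on $\mathbb{Z}$ with the same initial law and the same jump kernel, and then invoke uniqueness of a Markov process determined by its generator and initial distribution.

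First I would record the shape of both processes. By Theorem \ref{res-0a}-(4) the displacement random walk $G^{xy}$ coincides with the bridge $\widetilde{R}^{xy}$, which is supported on the set $\Gamma^{xy}$ of geodesics from $x$ to $y$. Since for $x<y$ in $\mathbb{Z}$ geodesics are exactly the nondecreasing paths making $y-x$ unit rightward jumps, $G^{xy}$ starts at $x$, ends at $y$, and a.s.\ never jumps leftward; in particular $J^{G,y}_{t,z}(z-1)=0$ for all $t$ and $x\le z<y$. On the other hand, the Poisson bridge $\widetilde{N}$ with $N_1=y-x$ is a pure-birth process starting at $0$, and $x+\widetilde{N}$ starts at $x$, ends at $y$, and only jumps by $+1$.

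Next I would identify the forward jump rates. The computation just before the claim gives
\begin{equation*}
J^{G,y}_{t,z}(z+1)=\frac{d(z,y)}{1-t}=\frac{y-z}{1-t},\qquad 0\le t<1,\ x\le z<y.
\end{equation*}
For the Poisson bridge, the well-known $h$-transform description states that conditionally on $N_1=n$, the process $\widetilde{N}$ is an inhomogeneous Markov chain with jump rate $(n-\widetilde{N}_t)/(1-t)$ at time $t$. Translating by $x$ with $n=y-x$ yields precisely the same rate $(y-z)/(1-t)$ at state $z$.

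Finally I would conclude. Both processes are Markov on $\mathbb{Z}$, with initial law $\delta_x$ and with identical, locally bounded, time-dependent jump kernels on $[0,1)$ (they vanish except for the $+1$ move, with common rate $(y-z)/(1-t)$). Standard uniqueness for the martingale problem of inhomogeneous Markov jump processes on a countable state space forces the two laws on $\Omega$ to coincide, which is the stated identity $\mathrm{Proba}(G^{xy}\in\cdot)=\mathrm{Proba}(N\in\{\cdot\}-x\mid N_1=d(x,y))$. The only mildly delicate point is to justify the $h$-transform formula for the Poisson bridge rate (or equivalently to check directly that the explicit ratio $\mathrm{Proba}(N_{1-t}=d(z+1,y))/\mathrm{Proba}(N_{1-t}=d(z,y))$ produces $d(z,y)/(1-t)$ as in the computation just performed), but this is the same elementary Poisson computation already carried out in the excerpt.
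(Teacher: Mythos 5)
Your proposal is correct and follows essentially the same route as the paper: the paper's entire argument for the claim is the displayed computation $J^{G,y}_{t,z}(z+1)=d(z,y)/(1-t)$, which is then identified with the jump rate $(n-\widetilde N_t)/(1-t)$ of the Poisson bridge, exactly as you do. You merely make explicit the supporting details the paper leaves implicit (both processes are Markov pure-birth processes started at $x$, and a Markov jump process with bounded time-dependent rates is determined by its initial law and jump kernel), which is a legitimate and welcome amplification rather than a different proof.
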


Since for each $t\in\ii,$ the law of $\widetilde N_t$ is the binomial $\mathcal{B}(d(x,y),t),$ $[\delta_x,\delta_y]$ is sometimes called a binomial interpolation, see \cite{GRST12,Hil12,Hil}. For each $0<t<1,$ the support of $G ^{xy}_t$  is $\left\{x, x+1,\dots,y\right\} $ and we have
\begin{equation*}
G ^{xy}_t=\sum _{z: x\le z\le y} \begin{pmatrix}
d(x,y)\\d(x,z)
\end{pmatrix}
t ^{d(x,z)} (1-t)^{d(z,y)}\, \delta_z.
\end{equation*}

\subsubsection*{The hypercube}

Consider the hypercube $\XX=\left\{0,1\right\}^{n}$ with its natural graph structure so that the graph distance is the Hamming distance $d(x,y)=\sum _{1\le i\le n}\1 _{\left\{x_i\ne y_i\right\} }$ where $x=(x_1,\dots,x_n)$ and $y=(y_1,\dots,y_n).$ The reference path measure $R$ is the simple random walk with the uniform measure as its initial law.
The directed tree that describes the geodesic dynamics between $x$ and $y$ has exactly $d(x,y)!$ directed chains with length $d(x,y)$ and endpoints $x$ and $y$. The law of $G ^{xy}$ is the uniform mixture of the $d(x,y)!$ corresponding Poisson bridges. 
\\
In order to describe for any $0\le t\le1,$ the law $G ^{xy}_t,$ let us encode  each intermediate state by an ordered sequence in $\left\{\textsf{d},\textsf{s}\right\}^{d(x,y)}$ where \textsf{d} and \textsf{s} stand respectively for ``different'' and ``same''. With this encoding,  $(\textsf{d},\dots,\textsf{d})$ is $x$ since $x$ has $d(x,y)$ components that are different from $y$, of course  $(\textsf{s},\dots,\textsf{s})$ is $y$  and
we see that the support $S ^{xy}$ of $G ^{xy}_t$ consists of $2 ^{d(x,y)}$ intermediate states  at any time $0<t<1.$ A short computation shows that for each $0<t<1,$ we have
\begin{equation*}
G ^{xy}_t=\sum _{z\in S ^{xy}} t ^{d(x,z)} (1-t)^{d(z,y)}\, \delta_z.
\end{equation*}

\section{The Schr\"odinger problem}\label{sec-Sch}

The reversing measure $m$ of  the simple random walk on an infinite graph is  unbounded, see \eqref{eq-98b}. Since it is the analogue of the volume measure of a Riemannian manifold, it is likely that the relative entropy with respect to $m$ should play an important role when trying to develop a Lott-Sturm-Villani theory on infinite graphs. Consequently, in this case the reference path measure $R$ is unbounded.
\\
In order to state the Schr\"odinger problem, it will be necessary to have in mind  some basic facts about relative entropy with respect to a possibly \emph{unbounded} reference measure. They are collected at Section \ref{sec-ent}.

\subsection*{Schr\"odinger problem}

We briefly introduce the main features of the Schr\"odinger problem. For more detail, see for instance the survey paper \cite{Leo12e}.
\\
The dynamic Schr\"odinger problem associated with the  random walk $R\in\MO$ is the following entropic minimization problem
\begin{equation*}\label{Sdyn}
H(P| R)\to \textrm{min};\qquad P\in\PO: P_0=\mu_0,\ P_1=\mu_1
 \tag{S$_{\mathrm{dyn}}$}
\end{equation*}
where $\mu_0, \mu_1\in\PX$ are prescribed initial and final marginals. As a \emph{strictly} convex problem, it admits at most one solution.

Let us  particularize the consequences of the additivity formula \eqref{eq-33} to $r=R,$ $p=P$ and $\phi=(X_0,X_1).$ Denoting $Q _{01}=(X_0,X_1)\pf Q\in \mathrm{M}_+(\XXX)$ and $Q ^{xy}=Q(\cdot\mid X_0=x,X_1=y)\in\PO,$   we have for all $P\in\PO,$
\begin{equation}\label{eq-21}
H(P|R)=H(P _{01}|R _{01})+\IXX H(P ^{xy}|R ^{xy})\,P _{01}(dxdy)
\end{equation}
which implies that
$
H(P _{01}|R _{01})\le H(P|R)
$
 with equality (when $H(P|R)<\infty$) if and only if 
\begin{equation}\label{eq-23}
P ^{xy}=R ^{xy}
\end{equation}
for $P _{01}$-almost every $(x,y)\in\XXX,$ see \eqref{eq-34}. Therefore $\Ph$ is the (unique) solution of \eqref{Sdyn} if and only if it disintegrates as 
\begin{equation}\label{eq-11}
\Ph(\cdot)=\IXX R ^{xy}(\cdot)\,\ph(dxdy)\in\PO
\end{equation}
where $\ph\in\PXX$ is the (unique) solution of the minimization problem
\begin{equation*}\label{S}
H(\pi| R _{01})\to \textrm{min};\qquad \pi\in\PXX: \pi_0=\mu_0,\ \pi_1=\mu_1
 \tag{S}
\end{equation*}
where $\pi_0,\pi_1\in\PX$ are respectively the first and second marginals of $\pi\in\PXX.$ Identity \eqref{eq-11} means that 
\begin{itemize}
\item
$\Ph$ shares its bridges with the reference path measure $R$, i.e.\ \eqref{eq-23};
\item
these bridges are mixed according to  $$\ph=\Ph _{01},$$ the unique solution of \eqref{S}.
\end{itemize}
The entropic minimization problem \eqref{S} is called the (static) \emph{Schr\"odinger problem}.
\\
With \eqref{eq-11}, we see that
\begin{equation}\label{eq-13}
\inf \eqref{Sdyn}=\inf \eqref{S}\in (-\infty,\infty].
\end{equation}

\subsection*{Proofs of Theorem  \ref{res-0a}-(1) and Proposition \ref{res-43}}

We begin with a key technical statement.

\subsubsection*{Girsanov's formula}

We shall take advantage, several times in the remainder of the article, of the absolute continuity of $R^k$ with respect to $R$.
Girsanov's formula gives the expression of the Radon-Nykodim derivative of $R^k$ with respect to $R$:
\begin{equation}\label{eq-14b}
Z^k:=\frac{dR^k}{dR}=
\exp \left(-(\log k) \ell+U^k\right) 
\end{equation}
where $U^k:=\IiX (1-k ^{-d(X _{t},y)})\, J _{t,X _{t}}(dy)dt$ and $\ell$ is the length defined at \eqref{eq-90}.


\subsubsection*{Proof of Theorem \ref{res-0a}-(1)}
The uniqueness follows from the strict convexity of the Schr\"odinger problem and we have just seen that $\ph=\Ph _{01}$. The Markov property of $\Ph$ which is inherited from the Markov property of $R$ is proved at \cite[Prop.\,2.10]{Leo12e}.
\\
It remains to show the existence.
For any $P\in\PO$ and any $k\ge1,$  with   \eqref{eq-14b} we see that 
\begin{eqnarray*}
H(P|R^k)&=&H(P|R)+\log k\ E_P(\ell) -E_P U^k\nonumber\\&\le& H(P|R)+\log k\ E_P(\ell)\nonumber\\
	&=& H(P _{01}|R _{01})+\IXX H(P ^{xy}|R ^{xy})\,P _{01}(dxdy)+\log k\ E_P(\ell).
\end{eqnarray*}
Choosing
\begin{equation}\label{eq-00}
P^o(\cdot):=\IXX R ^{xy}(\cdot)\,\pi^o(dxdy),
\end{equation}
we have
\begin{equation}\label{eq-65}
H(P^o|R^k)\le H(\pi^o|R _{01})+\log k\ \IXX E _{R ^{xy}}(\ell)\,\pi^o(dxdy).
\end{equation}
With Hypothesis \ref{hyp-00}-($\mu$) we obtain $\inf \eqref{Skdyn}<\infty$ and it follows that \eqref{Skdyn} and \eqref{Sk} admit a solution, see \cite[Lemma\,2.4]{Leo12e}.
\hfill$\square$

\subsubsection*{Proof of Proposition \ref{res-43}}
Taking $\pi^o=\mu_0\otimes \mu_1$ in \eqref{eq-65} gives
\begin{equation*}
H(P^o|R^k)\le H(\mu_0\!\otimes\!\mu_1|R _{01})+\log k\ \IXX E _{R ^{xy}}(\ell)\,\mu_0(dx)\mu_1(dy).
\end{equation*}
It is proved at \cite[Prop.\,2.5]{Leo12e} that  the assumptions (i), (ii) and (iv) together with $H(\mu_0|R_0), H(\mu_1|R_1)<\infty$ imply  $H(\mu_0\otimes  \mu_1|R _{01})<\infty$. As regards the last term, it is clear that (i) and (iii) imply $\IXX E _{R ^{xy}}(\ell)\,\mu_0(dx)\mu_1(dy)<\infty.$ \hfill$\square$

\section{Lazy random walks converge to displacement random walks}\label{sec-convergence}

The aim of this section is to make precise the convergence of  \eqref{Skdyn}$_{k\ge2}$. It is proved at Theorem \ref{res-0a} that the sequence  of minimizers of \eqref{Skdyn}$_{k\ge2}$  has a limit in $\PO$ which is  singled out among the infinitely solutions of the dynamic Monge-Kantorovich problem \eqref{MKdyn}.
 As a corollary, we describe at Theorem \ref{res-11} the convergence of  the sequence   of bridges $(R ^{k,xy})_{k\ge1}$.

\subsection*{The topological  path space $\OO$}
The countable set $\XX$ is equipped with its discrete topology. The  set $D(\ii,\XX)$ of all left-limited  right-continuous paths on $[0,1)$ and left-continuous at the terminal time $t=1,$ be equipped with the Skorokhod topology.  Note that, although for any $0<t<1$, the mapping $X_t:D(\ii,\XX)\to\XX$ is discontinuous, both the endpoint positions $X_0$ and $X_1$ are continuous. This will be used later several times.
Let us denote the total number of jumps, defined on $D(\ii,\XX)$, by
\begin{equation}\label{eq-17}
N:=\st \1 _{\left\{X _{t^-}\not=X_t\right\} }\in\mathbb{N}\cup \left\{\infty\right\}.
\end{equation}
We consider $\OOt=\left\{\omega\in D([0,1],\XX); \forall t\in(0,1), \omega _{t^-}\not=\omega_t\implies \omega _{t^-}\sim \omega_t\right\} $  the subset of all paths compatible with the graph structure and introduce 
\begin{equation}\label{eq-19}
\OO:=\left\{N<\infty\right\}\cap \OOt,
\end{equation}
the set of all c\`adl\`ag paths  from $\ii$ to $\XX$   which are compatible with the graph structure and piecewise constant with finitely many jumps. 

Our Hypotheses \ref{hyp-00}, and in particular \eqref{eq-01b}, imply that the support of each $R^k$ is included in $\OO.$ Since, $\Ph^k$ is absolutely continous with respect to $R^k,$ it also lives on $\OO$ which appears to be the relevant path space.

As $\XX$ is a discrete space, for each $n\in \mathbb{N},$ $\left\{N=n\right\}\cap\OOt $ is a  closed and open (clopen) set. In particular $\OO$ is closed in $D(\ii,\XX)$ and it inherits its (trace) Polish topological structure and the corresponding (trace) Borel $\sigma$-field which is generated by the canonical process (restricted to $\OO$). The path space
$ 
\OO=\bigsqcup _{n\in \mathbb{N}}\left\{N=n\right\}\cap\OOt
$ 
is partitioned by the disjoint clopen sets $\left\{N=n\right\}\cap\OOt .$
A small neighbourhood of   $\omega\in\OO$ consists of paths visiting exactly the same states as $\omega$ in the same order of occurrence and with jump times close to $\omega$'s ones. From now on, any topological statement on $\OO$ refers to this  topology and the canonical process $(X_t)_{0\le t\le1}$ lives on $\OO.$

\subsection*{$\Gamma$-convergence}
The right notion of convergence for the sequences of minimization problems   $\eqref{Skdyn}_{k\ge2}$ and $\eqref{Sk}_{k\ge2}$ is the $\Gamma$-convergence which is briefly over-viewed  now.
 Recall that $\Glim k f^k=f$ on the metric space $Y$ if and only if for any $y\in Y,$
\begin{enumerate}[(a)]
\item
$\Liminf k f^k(y_k)\ge f(y)$ for any convergent sequence $y_k\to y,$
\item
$\Lim k f^k(y^o_k)=f(y)$ for some sequence $y^o_k\to y.$
\end{enumerate}
A  function $f$ is said to be coercive if for any $a\ge \inf f,$ $\left\{f\le a\right\} $ is a compact set.
\\
 The sequence $(f^k)_{k\ge1}$ is said to be equi-coercive if for any real $a$, there exists some compact set $K_a$ such that $\cup_k\left\{f^k\le a\right\} \subset K_a.$
\\
If in addition to $\Glim k f^k=f$, the sequence $(f^k)_{k\ge1}$ is equi-coercive,
 then:
\begin{itemize}
\item
$\Lim k \inf f^k=\inf f,$
\item
if $\inf f<\infty,$
any limit point $y^*$ of a sequence $(y^*_k)_{k\ge1}$ of approximate minimizers i.e.: $f^k(y^*_k)\le \inf f^k+\epsilon_k$ with $\epsilon_k\ge0$ and $\Lim k \epsilon_k=0,$ minimizes $f$ i.e.: $f(y^*)=\inf f.$
\end{itemize}
For more detail about $\Gamma$-convergence, see \cite{DalMaso} for instance.

\subsection*{The convergences of $\eqref{Skdyn}_{k\ge2}$ and $\eqref{Sk}_{k\ge2}$}

As \eqref{Skdyn} and \eqref{Sk} are deeply linked to each other via the relations \eqref{eq-11} and \eqref{eq-13}, the convergence of the static problems will follow from the convergence of the dynamic problems \eqref{Skdyn}.

The convex indicator $\iota_A$ of  any subset $A,$ is defined to be equal to $0$ on $A$ and to $\infty$ outside $A.$
We denote for each $k\ge2,$ (we drop $k=1$ not to divide by $\log (1)$ below),
\begin{equation*}
I^k(P):=H(P|R^k)/\log k+\iota_{\{P:P_0=\mu_0,P_1=\mu_1\}},\qquad P\in\PO,
\end{equation*}
so that \eqref{Skdyn} is simply: ($I^k\to \textrm{min}).$ We also define
$$
I(P)=E_P(\ell)+\iota_{\{P:P_0=\mu_0,P_1=\mu_1\}},\qquad P\in\PO.
$$
Let us rewrite $(I\to \textrm{min})$ as the following dynamic Monge-Kantorovich problem:
\begin{equation*}
E_P(\ell)\to \textrm{min};\qquad P\in\PO: P_0=\mu_0, P_1=\mu_1.
\tag{MK$_{\mathrm{dyn}}$}
\end{equation*}
Otherwise stated, the topologies on $\PO$ and $\PXX$ are respectively the topologies of narrow convergence: $\sigma(\PO,C_b(\OO))$ and $\sigma(\PXX,C_b(\XXX))$ which are weakened by the spaces $C_b(\OO)$ and $C_b(\XXX)$ of all numerical continuous and bounded functions. The $\Gamma$-convergences are related to these topologies.
\\
It is shown below at Lemma \ref{res-03} that $\Glim k I^k=I,$ meaning that \eqref{MKdyn} is the limit of \eqref{Skdyn}$_{k\ge2}.$

\begin{lemma}\label{res-05}
The function $I$ is coercive.
\end{lemma}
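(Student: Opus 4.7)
The plan is to show, for each real $a$, that the sublevel set $\{I\le a\}\subset\PO$ is compact in the narrow topology, by establishing lower semicontinuity of $I$ together with tightness of $\{I\le a\}$.

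\emph{Lower semicontinuity.} The map $P\mapsto (P_0,P_1)$ is continuous on $\PO$ for the narrow topology, since $X_0$ and $X_1$ are continuous on $\OO$ (recall that jumps cannot occur at the endpoints in $\OO$); hence the constraint $\{P_0=\mu_0,P_1=\mu_1\}$ is closed. On the other hand, since $\XX$ is discrete and $\OO=\bigsqcup_n(\{N=n\}\cap\widetilde\OO)$ is partitioned into clopen sets, a convergent sequence $\omega_k\to\omega$ in the Skorokhod topology eventually lies in the same piece $\{N=n\}\cap\widetilde\OO$ and visits the same states in the same order; thus $\ell(\omega_k)=\ell(\omega)$ for $k$ large, which shows that $\ell:\OO\to[0,\infty)$ is continuous. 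As $\ell\ge 0$ is continuous, $P\mapsto E_P(\ell)$ is lsc on $\PO$. Therefore $I$ is lsc and $\{I\le a\}$ is closed.

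\emph{Tightness.} I will exhibit, for each $\varepsilon>0$, a compact subset $K\subset\OO$ such that $P(K^c)\le\varepsilon$ for all $P$ with $I(P)\le a$. Fix any such $P$. Hypothesis \ref{hyp-00}-($d$) gives $d\ge 1$, so $\ell(\omega)\ge N(\omega)$ and the Markov inequality yields $P(N>M)\le a/M$. Since $d$ is intrinsic, for every $t\in\ii$, $d(X_0,X_t)\le\ell$, whence
\begin{equation*}
P\Bigl(\sup_{t\in\ii} d(X_0,X_t)>r\Bigr)\le E_P(\ell)/r\le a/r.
\end{equation*}
Because $\mu_0\in\PX$ is tight, choose a finite $K_0\subset\XX$ with $\mu_0(K_0^c)<\varepsilon/4$. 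Set $F:=\{y\in\XX: d(y,K_0)\le r\}$. Since $d\ge 1$ on distinct vertices, one has $d\ge d_\sim$ (the graph distance), and since $(\XX,\sim)$ is locally finite (Hypothesis \ref{hyp-00}-($\sim$)), every $d_\sim$-ball is finite, so $F$ is finite. On the event $\{X_0\in K_0\}\cap\{\sup_t d(X_0,X_t)\le r\}\cap\{N\le M\}$, the path $\omega$ takes its values in $F$ and has at most $M$ jumps. Thus, setting
\begin{equation*}
K:=K^{F,M}=\{\omega\in\OO:\omega_t\in F\text{ for all }t\in\ii,\ N(\omega)\le M\},
\end{equation*}
we obtain $P(K^c)\le \varepsilon/4+a/r+a/M$, which is at most $\varepsilon$ for $r$ and $M$ large enough, uniformly in $P\in\{I\le a\}$.

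\emph{Compactness of $K^{F,M}$.} Any $\omega\in K^{F,M}$ is encoded by a choice of $n\le M$ jumps, a sequence of visited vertices in the finite set $F$ (finitely many choices, taking into account the graph compatibility), and a non-decreasing $n$-tuple of jump times in $[0,1]$. Thus $K^{F,M}$ embeds as a finite union of continuous images of compact simplices of the form $\{0\le t_1\le\cdots\le t_n\le 1\}$, and is closed in $\OO$; hence it is compact in the Skorokhod topology. Combined with the tightness estimate above, this gives relative compactness of $\{I\le a\}$, and together with lower semicontinuity, the desired compactness.

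The only mildly delicate point is verifying the compactness of $K^{F,M}$ in the Skorokhod topology; it is combinatorial and reduces to the fact that restricting to a finite state space with a bounded number of jumps leaves only finitely many ``shapes'' of paths parametrized by ordered jump times in a compact simplex.
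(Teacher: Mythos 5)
Your proof follows the paper's argument for Lemma \ref{res-05} essentially step for step: lower semicontinuity of $P\mapsto E_P(\ell)$ from the continuity of $\ell$ (which is locally constant on $\OO$), closedness of the marginal constraints via the continuity of $X_0$ and $X_1$, and uniform tightness of the sublevel sets from $\ell\ge N$, Markov's inequality and the local finiteness of the graph. One remark on economy: the detour through $\sup_{t\in\ii}d(X_0,X_t)\le\ell$ and the radius $r$ is redundant, since once $X_0\in K_0$ and $N\le M$ local finiteness already confines the path to the finite set of vertices at graph distance at most $M$ from $K_0$; this is exactly how the paper argues, with the single set $\left\{X_0\in K_n,\ N\le n\right\}$.

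The step you flag as ``mildly delicate'' --- the compactness of $K^{F,M}$ in the Skorokhod topology --- is indeed the only nontrivial point, and the justification you give for it does not hold up. The parametrization of a path by its itinerary and its jump times is a homeomorphism from the \emph{open} simplex $\{0<t_1<\cdots<t_n<1\}$, not a continuous map from the closed one: where consecutive jump times coincide the path degenerates, and since $\{N=n\}\cap\OOt$ is clopen, a sequence such as $\omega^k$ visiting $x\to y\to x$ with jump times $\tfrac12\pm\tfrac1k$ (all lying in $K^{F,M}$) has no $J_1$-convergent subsequence: any limit would have to keep the itinerary $x,y,x$ with coalescing jump times, and the modulus $w'_\delta(\omega^k)$ stays bounded below for $\delta\ge 2/k$. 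So $K^{F,M}$ is a finite union of closed copies of open simplices --- closed in $\OO$ but not compact --- and a bound on $E_P(\ell)$ alone does not prevent jump times from coalescing. To be fair, the paper asserts the compactness of the analogous set $\left\{X_0\in K_n,\ N\le n\right\}$ without further argument, so your proposal is faithful to the reference proof; but the specific ``continuous image of compact simplices'' argument you added is not correct as written, and a watertight version of this tightness step would require some additional control on the spacing of the jump times (e.g.\ a uniform bound on $\sup_P E_P[w'_\delta\wedge 1]$ as $\delta\to 0$) beyond what $E_P(\ell)\le a$ provides.
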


\begin{proof}
As $\left\{P:P_1=\mu_1\right\} $ is closed, it is enough to show that the function $P\mapsto E_P(\ell)+\iota _{\left\{P:P_0=\mu_0\right\} }$ is coercive.
Since $\ell\ge0$ is continuous, $P\mapsto E_P(\ell)
=\sup _{n\ge1}E_P(\ell\wedge n)$ is lower semi-continuous. As in addition $\left\{P:P_0=\mu_0\right\}$ is closed, the function $P\mapsto E_P(\ell)+\iota _{\left\{P:P_0=\mu_0\right\} }$ is also  lower semi-continuous. It remains to show that for every $a\ge0$, $\left\{P:P_0=\mu_0,E_P(\ell)\le a\right\} $ is uniformly tight in $\PO.$ 
\\
For any $n\ge1,$ there is some compact (finite) subset $K_n$ of $\XX$ such that $\mu_0(K_n)\ge 1-1/n.$ We have $\ell\ge N$ where $N$ in the number of jumps, see \eqref{eq-17}. Hence, any $P$ such that  $P_0=\mu_0$ and $E_P(\ell)\le a$ satisfies:
$$
P(X_0\in K_n, N\le n)\ge 1-P(X_0\not\in K_n)-P(\ell> n)
\ge 1-1/n-a/n.
$$
As it is assumed that the graph $(\XX,\sim)$ is locally finite, see \eqref{eq-03}, $\left\{X_0\in K_n, N\le n\right\} $ is a  compact subset of $\OO$ (recall that $\OO$ is compatible with the graph structure, see \eqref{eq-19}). 
This proves the desired uniform tightness and completes the proof of the lemma.
\end{proof}

\begin{lemma}\label{res-06}
For any $P\in\PO,$ there exists a sequence $\seq Pn$ in $\PO$ such that $\Lim n P_n=P,$ $\Lim n E _{P_n}(\ell)=E_P(\ell),$ and $H(P_n|R)<\infty$ for all $n\ge1.$ 
\end{lemma}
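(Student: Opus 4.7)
The plan is a two-step approximation: first truncate $P$ to paths with initial position in a finite subset and bounded number of jumps, then smooth the jump-time distribution so that the result is absolutely continuous with respect to $R$. The crucial observation is that $\ell(\omega) = \sum_{0 < t < 1} d(\omega_{t^-}, \omega_t)$ depends only on the sequence of states visited by $\omega$, not on the jump instants; hence any regularization preserving the visit sequence but perturbing jump times leaves $E_{(\cdot)}(\ell)$ unchanged.

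Pick an increasing sequence $K_n \uparrow \XX$ of finite subsets with $\mu_0(K_n) \to 1$ and define $P^{(n)} := P(\,\cdot \mid X_0 \in K_n,\, N \le n)$, which is well-defined for $n$ large since $P(N < \infty) = 1$ (because $P \in \PO$). Then $P^{(n)} \to P$ narrowly, and $E_{P^{(n)}}(\ell) \to E_P(\ell)$ (whether finite or infinite) by monotone convergence applied to $\ell\,\1_{\{X_0 \in K_n,\, N \le n\}} \uparrow \ell$. Now decompose $P^{(n)}$ along the strata $\Sigma_{\vec x}$ indexed by graph-compatible finite sequences $\vec x = (x_0, \ldots, x_k)$ with $x_0 \in K_n$, $k \le n$, and $x_{i-1} \sim x_i$; by the local finiteness of $(\XX, \sim)$ (see \eqref{eq-03}) only finitely many $\vec x$ carry positive $P^{(n)}$-mass. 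On each $\Sigma_{\vec x}$, paths are parametrized by jump times in the open simplex $\Delta_k = \{0 < t_1 < \cdots < t_k < 1\}$. Further restrict to $\Delta_k^{\delta_n}$, the set of jump-time vectors separated from each other and from $0, 1$ by at least $\delta_n > 0$, and convolve the resulting jump-time marginal on each stratum with a smooth compactly-supported mollifier $\phi_{\epsilon_n}$ of width $\epsilon_n < \delta_n/2$. Assemble these smoothed strata into a single probability measure $P_n$. As $\delta_n, \epsilon_n \to 0$ with $n \to \infty$, narrow continuity of the convolution and $P(\Delta_k^{\delta_n} \mid \Sigma_{\vec x}) \to 1$ give $P_n \to P$, and $E_{P_n}(\ell) \to E_P(\ell)$ because $\ell$ is constant on each $\Sigma_{\vec x}$.

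The main obstacle is verifying $H(P_n|R) < \infty$. On each stratum $\Sigma_{\vec x}$, the Markov structure of $R$, the positivity of $J_{t,x}(y)$ for $x \sim y$ (Hypothesis~\ref{hyp-00}-$(R)$), and the uniform bound \eqref{eq-01b} yield a density for $R$ with respect to Lebesgue measure on $\Delta_k$ equal to $m_{x_0}\prod_{i=1}^k J_{t_i,x_{i-1}}(x_i)\,\exp(-\int_0^1 J_{t,X_t}(\XX)\,dt)$, which is continuous, strictly positive on $\Delta_k$, uniformly bounded above, and bounded below by a positive constant on the compact set $\Delta_k^{\delta_n}$. Since by construction $P_n$ has a Lebesgue density bounded above by $\sup\phi_{\epsilon_n} < \infty$, the Radon--Nikodym derivative $dP_n/dR$ is bounded above and below by positive constants (depending on $n$) on the support of $P_n$, which is a finite union of such compact sets. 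Therefore $\log(dP_n/dR)$ is bounded and $H(P_n|R) < \infty$. A diagonal choice of $(\delta_n, \epsilon_n) \to (0,0)$ slow enough completes the construction.
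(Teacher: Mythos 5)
Your proposal is correct and reaches the lemma by a noticeably more direct route than the paper's. Both arguments rest on the same key observation --- $\ell$ depends only on the ordered sequence of visited states, so the jump times may be regularized freely --- but the implementations differ. The paper decomposes $P$ along the countably many values of $\ell$, approximates each conditional law by finite convex combinations of Dirac masses, approximates each $\delta_\omega$ by a Markov walk whose jump kernel is a sum of non-overlapping absolutely continuous windows concentrating at the jump times of $\omega$, and reassembles via Jensen's inequality, checking finiteness of entropy through the Girsanov-type formula $H(Q|R)=E_Q\int h\bigl(dJ^Q/dJ\bigr)\,J\,dt$. You instead truncate to the finitely many strata compatible with $\{X_0\in K_n,\,N\le n\}$, condition on the visit sequence, and mollify the conditional jump-time law on each simplex; finiteness of entropy then follows from an explicit bound on the density ratio on a compact subset of each simplex of finite $R$-measure. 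Your route avoids the intermediate approximation by Dirac combinations and the Markov construction altogether (in particular it sidesteps the delicate point that a kernel with $\int\varphi_i^n\,dt=1$ only produces the $i$-th jump with probability $1-e^{-1}$, so that concentration of $Q_n^\omega$ on $\{\ell=\ell(\omega)\}$ is not automatic in the paper's construction); what you give up is the exact identity $E_{P_n}(\ell)=E_P(\ell)$ that the paper obtains, settling for convergence, which is all the lemma requires.

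One caveat, which you share with the paper's own proof rather than introduce: the lower bound on the $R$-density $\prod_i J_{t_i,x_{i-1}}(x_i)$ over the compact set $\Delta_k^{\delta_n}$ uses $\inf_{t\in\ii}J_{t,x}(y)>0$ for $x\sim y$, which does not formally follow from Hypothesis \ref{hyp-00}-($R$) (only pointwise positivity in $t$ and the uniform upper bound \eqref{eq-01b} are assumed). The paper's verification that $H(Q_n^\omega|R)<\infty$ requires the same local integrability of $\log\bigl(1/J_{t,x}(y)\bigr)$ in $t$, so this is an implicit regularity assumption of the paper, not a gap specific to your argument; it would be worth stating it explicitly if you wrote this up.
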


A similar result would fail in a diffusion setting with for instance $\OO=C(\ii,\RR)$ and $R$  the reversible Wiener measure (with Lebesgue measure as initial marginal). Here, we are going to take advantage of the countability of the discrete space $\XX$ and of assumption \eqref{eq-01b} which allowed us to introduce  at \eqref{eq-19} a made-to-measure definition of the path space $\OO$. Indeed, this definition is entirely motivated by Lemma \ref{res-06}.

\begin{proof}[Proof of Lemma \ref{res-06}]
Let us pick $P\in\PO.$
\\
(a)\quad The set $\ell(\OO)$ of all possible values of $\ell$ is countable; let us enumerate it: $\ell(\OO)=\left\{c_ n;n\ge 1\right\} $ and expand $P$ along  these values: $P=\sum_n P(\ell=c_n) P(\cdot\mid \ell=c_n).$
\\
(b) \quad Suppose that $Q\in\PO$ is concentrated on the set $\left\{\ell=c\right\} .$ As $\left\{\ell=c\right\}$ is metric separable, there exists a sequence of convex combinations of Dirac masses: $Q_n=\sum _{i=1}^n a^{in} \delta _{\omega^{in}}$ with $\omega ^{in}\in\left\{\ell=c\right\}$ such that $\Lim n Q_n=Q.$
\\
(c) \quad Let $\omega\in\OO$ be a fixed path. We shall prove below that there is a sequence $\seq {Q ^{\omega}} n$ in $\PO$  such that $\Lim n Q_n^\omega=\delta_\omega$ and for each $n$, $Q_n^\omega$ is concentrated on $\left\{\ell=\ell(\omega)\right\} $ and $H(Q_n^\omega|R)<\infty.$

Putting (a), (b) and (c) together, it is not hard to check with the aid of Jensen's inequality applied to the convex function $H(\cdot|R),$ that there exists a sequence $\seq Pn$ in $\PO$ such that $\Lim n P_n=P$ and  for each $n,$ $E _{P_n}(\ell)=E_P(\ell)$ and  $H(P_n|R)<\infty$, which is the desired result.

It remains to prove (c), taking advantage of the specificity of the path space $\OO.$ Let $\omega\in\OO$ be fixed. It is completely described by its jump times $0<t_1<\cdots<t_k<1$ and the corresponding states $(\omega_0,\omega _{t_1},\dots,\omega _{t_k}).$  One  choose $Q^\omega_n\in\PO$ as a Markov probability measure with initial marginal $\delta _{\omega_0}$ and jump kernel  $J_n ^{\omega}=\sum _{i=1}^k \varphi_i^n(t)\,dt\delta _{\omega _{t_i}}$ where the non-negative continuous functions $\varphi_i^n$ have, for each fixed $n\ge 1,$ non-overlapping compact supports as $1\le i\le k$ varies and are such that for each $1\le i\le k,$ $(\varphi_i(t)\,dt)_{n\ge1}$ is an  approximation of $\delta _{t_i}.$
Changing the jump times but keeping the order of $(\omega_0,\omega _{t_1},\dots,\omega _{t_k}),$ doesn't change the value $\ell(\omega).$ Therefore , $Q_n^\omega$ is concentrated on $\left\{\ell=\ell(\omega)\right\} $.
We have $H(Q_n^\omega|R)=E _{Q_n^\omega}\IiX h(\frac{ dJ_n^\omega(t,X_t)}{dJ(t,X_t)}(y))\,J _{t,X_t}(dy) dt $ with $h(a)=a\log a-a +1$ if $a>0$ and $h(0)=1.$ One easily sees that $H(Q^\omega_n|R)<\infty$, using the assumption \eqref{eq-01b} , the fact that  $\omega$ is compatible with the graph structure (by the very definition of $\OO$) and also that $J _{t_i,\omega _{t_i}}(\omega _{t _{i+1}})>0$ for all $i$  (since by Hypothesis \ref{hyp-00}-($R$), $J _{t,x}(y)>0,$ for all $t, x\sim y$).   On the other hand, the compactness of the common initial law $\delta_{\omega_0}$ and the weak convergence of the jump kernels $(J_n^\omega)_{n\ge1}$ to $\sum _{i=1}^k \delta _{\omega _{t_i}}\delta _{t_i}$ which is the jump kernel of $\delta _{\omega}$ implies that $\Lim n Q_n ^{\omega}=\delta _{\omega}$ in $\PO.$ This completes the proofs  of (c) and the lemma.
\end{proof}

\begin{lemma}\label{res-03} The sequence $(I^k)_{k\ge2}$ is equi-coercive and
$\Glim k I^k=I.$
\end{lemma}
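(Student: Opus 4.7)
The backbone of the proof is Girsanov's formula \eqref{eq-14b}, which for any $P\in\PO$ with $P\ll R^k$ yields
\begin{equation*}
I^k(P)=\frac{H(P|R)}{\log k}+E_P(\ell)-\frac{E_P(U^k)}{\log k}+\iota_{\{P_0=\mu_0,P_1=\mu_1\}}(P).
\end{equation*}
Hypothesis \ref{hyp-00}-($R$), especially \eqref{eq-01b}, forces $0\le U^k\le C$ with $C:=\sup_{t,x}J_{t,x}(\XX)<\infty$, so the perturbative term $E_P(U^k)/\log k$ is $O(1/\log k)$ uniformly in $P$. As $k\to\infty$, the entropic regularization $(\log k)^{-1}H(\cdot|R)$ fades away, leaving the affine length term $E_\cdot(\ell)$ constrained by the prescribed marginals; this is the mechanism behind $\Glim k I^k=I$.

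\textbf{$\Gamma$-liminf and equi-coercivity.} Let $P_k\to P$ in $\PO$. Because $X_0,X_1:\OO\to\XX$ are continuous and $\XX$ is discrete, narrow continuity of $P\mapsto P_0,P_1$ together with the fact that $\{\mu_0\},\{\mu_1\}$ are clopen targets imply that if $P_0\ne\mu_0$ or $P_1\ne\mu_1$ then $I^k(P_k)=+\infty$ eventually, rendering the liminf inequality trivial; otherwise one may assume all $P_k$ satisfy the marginal constraints. Combining the Girsanov identity with a uniform lower bound $H(P|R)\ge H_0>-\infty$ over the admissible set (deduced from the disintegration $H(P|R)\ge H(P_{01}|R_{01})$ and Hypothesis \ref{hyp-00}-($\mu$)), one gets $(\log k)^{-1}H(P_k|R)\to 0$, while $P\mapsto E_P(\ell)=\sup_n E_P(\ell\wedge n)$ is lower semicontinuous (as in Lemma \ref{res-05}), yielding $\liminf_k I^k(P_k)\ge E_P(\ell)=I(P)$. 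The same Girsanov rearrangement $E_P(\ell)\le I^k(P)+(C-H_0)/\log k$ bounds $E_P(\ell)$ uniformly over $\bigcup_{k\ge 2}\{I^k\le a\}$, so Lemma \ref{res-05}'s tightness argument (exploiting $\ell\ge N$ and the local finiteness of $(\XX,\sim)$) establishes equi-coercivity.

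\textbf{$\Gamma$-limsup.} For $P$ with $I(P)<\infty$ one must exhibit a recovery sequence. When $H(P|R)<\infty$, Girsanov directly yields $I^k(P)\to E_P(\ell)=I(P)$ and one takes $P_k^o:=P$. In general, Lemma \ref{res-06} supplies a sequence $P_n\to P$ with $H(P_n|R)<\infty$ and $E_{P_n}(\ell)\to E_P(\ell)$; for each fixed $n$, Girsanov gives $\lim_k I^k(P_n)=E_{P_n}(\ell)$, and a diagonal extraction $k\mapsto P_{n(k)}$ furnishes $P_{n(k)}\to P$ with $\lim_k I^k(P_{n(k)})=I(P)$.

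\textbf{Main obstacle.} The principal technical difficulty is twofold, both rooted in the possibly unbounded nature of the reference measure $R$: securing the uniform lower bound $H(P|R)\ge H_0$ over the admissible set, and ensuring that the recovery sequence \emph{exactly} meets the marginal constraints, since Lemma \ref{res-06} only produces convergence of marginals. The lower bound should follow from chain-rule manipulations of the entropy against $R_{01}$ together with the integrability of the admissible plan $\pi^o$ furnished by Hypothesis \ref{hyp-00}-($\mu$). The marginal-matching issue can be addressed by perturbing each $P_n$ by a small convex combination with the fully admissible reference $P^o:=\IXX R^{xy}\,\pi^o(dxdy)$ of \eqref{eq-00}, which has correct marginals and finite entropy, and then invoking the convexity of $H(\cdot|R^k)$ to control the entropy of the perturbed measure while preserving the asymptotic value of $E_\cdot(\ell)$.
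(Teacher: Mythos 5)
Your argument follows the paper's proof of Lemma \ref{res-03} essentially step by step: the Girsanov decomposition \eqref{eq-14b} together with the uniform bounds \eqref{eq-15} coming from \eqref{eq-01b}, the lower semicontinuity of $P\mapsto E_P(\ell)$ and the coercivity of $I$ (Lemma \ref{res-05}) for the liminf inequality and equi-coercivity, and Lemma \ref{res-06} followed by a diagonal extraction $k\mapsto n(k)$ with $H(P_{n(k)}|R)/\log k\to0$ for the recovery sequence. (One small imprecision: over the admissible set the Girsanov identity only yields $\liminf_k (\log k)^{-1}H(P_k|R)\ge 0$, not convergence to $0$; this one-sided bound is all that the liminf inequality requires, so the conclusion stands.)

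The one place where you depart from the paper is your proposed repair of the marginal constraints in the recovery sequence, and that step would fail as written. If $P_n$ has endpoint marginals $P_{n,0},P_{n,1}$ only \emph{close} to $\mu_0,\mu_1$, then the convex combination $(1-\epsilon_n)P_n+\epsilon_n P^o$ has initial marginal $(1-\epsilon_n)P_{n,0}+\epsilon_n\mu_0$, which still differs from $\mu_0$: averaging with an admissible measure can never restore exact marginals. The paper sidesteps the issue by reducing to the unconstrained statement $\Glim k H^k(P)=E_P(\ell)$ for all $P\in\PO$ and invoking the closedness of the constraint set $\{P:P_0=\mu_0,P_1=\mu_1\}$; if one insists on producing a recovery sequence inside the constraint set, the correction must be made at the level of the construction in Lemma \ref{res-06} (for instance by conditioning the approximating Markov walks on their terminal state, or by building the approximation bridge by bridge and remixing with the exact coupling $P_{01}$), not by convex combinations. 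Apart from this flawed auxiliary step, your proof is the paper's proof.
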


\begin{proof}
Let us denote
\begin{equation*}
H^k(P):=H(P|R^k)/\log k,\quad P\in\PO.
\end{equation*}
We first prove the equi-coercivity of $(I^k)_{k\ge2}.$
 Using \eqref{eq-14b}, we obtain
\begin{eqnarray*}
H^k(P)&=&E_P\left[\log(dP/dR)-\log Z^k)\right] /\log k\\
	&=& E_P(\ell)+\left[ H(P|R)-E_P  \int_0^1 J _{t,X _{t}}(\XX)\,dt\right]/\log k +E_P\int _{\iX} k ^{-d(X _{t},y)}\, J _{t,X _{t}}(dy)dt/\log k
\end{eqnarray*}
Because of assumption \eqref{eq-01b},  we have the uniform bounds
\begin{equation}\label{eq-15}
0\le E_P \int_0^1 J _{t,X _{t}}(\XX)\,dt, \ E_P\int _{\iX} k ^{-d(X _{t},y)}\, J _{t,X _{t}}(dy)dt\le \sup _{t,x}J _{t,x}(\XX)<\infty,\forall P\in\PO, k\ge2.
\end{equation}
Hence, we obtain with $I-[(-\inf \eqref{S}\vee 0)+\sup _{t,x}J _{t,x}(\XX)]/\log 2\le I^k$ for all $k\ge2,$ and  Lemma \ref{res-05}  that  $(I^k)_{k\ge2}$ is equi-coercive.
\\
For future use, remark that $H^k(P)<\infty$ if and only if
\begin{equation}\label{eq-20}
 E_P(\ell)<\infty \quad \textrm{and} \quad H(P|R)<\infty.
\end{equation}
Now, we  prove that $\Glim k I^k=I.$
As the constraint set $\left\{P\in\PO; P_0=\mu_0,P_1=\mu_1\right\} $ is closed, it is enough to show that $$\Glim k H^k(P)=E_P(\ell),\quad \forall P\in\PO.$$ 
Since  $P\mapsto E_P(\ell)$ is lower semi-continuous and $H(\cdot|R)\ge0$, with \eqref{eq-15}, we obtain  for any  convergent sequence $P_k\underset{k\to \infty}{\rightarrow} P$ that $\Liminf k H^k(P_k)\ge E_P(\ell)$. Lemma \ref{res-06} tells us that from any recovery sequence $\seq Pn$ for the lower semi-continuity of $P\mapsto E_P(\ell),$ i.e.\ such that $\Lim n E _{P_n}(\ell)=E_P(\ell),$ one can build  a recovery sequence for $(H^k)_{k\ge1},$ i.e.\ $\Lim k H^k(P^k)=E_P(\ell).$ Namely, take $P^k=P _{n(k)}$ with $k\mapsto n(k)$ increasing to infinity slowly enough for $\Lim k H(P _{n(k)}|R)/\log k=0.$ 
This completes the proof the proposition.
\end{proof}

\begin{proposition}\label{res-22}
For any $\mu_0$, $\mu_1\in\PX$, we have
$$
\Lim k \inf \eqref{Sk}=\Lim k \inf \eqref{Skdyn}=\inf \eqref{MKdyn}=\inf \eqref{MK}\in (-\infty,\infty].
$$
\end{proposition}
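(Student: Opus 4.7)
The statement splits into three equalities which I would chain together: (a) $\Lim k \inf \eqref{Sk} = \Lim k \inf \eqref{Skdyn}$, (b) $\Lim k \inf \eqref{Skdyn} = \inf \eqref{MKdyn}$, and (c) $\inf \eqref{MKdyn} = \inf \eqref{MK}$. Step (b) is essentially free: Lemma \ref{res-03} supplies both $\Glim k I^k = I$ and equi-coercivity of $(I^k) _{k\ge 2}$, so the standard consequence of $\Gamma$-convergence under equi-coercivity (recalled just before Lemma \ref{res-03}) forces $\Lim k \inf I^k = \inf I$, which is exactly (b). That the common value lies in $(-\infty,\infty]$ is clear from $\ell \ge 0$ combined with Hypothesis \ref{hyp-00}-($\mu$), which provides a test measure with finite objective and hence $\inf \eqref{MKdyn} < \infty$.

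For step (a) I would establish, for each fixed $k\ge 2$, the identity $\inf \eqref{Sk} = \inf \eqref{Skdyn}$---the $R^k$-analogue of \eqref{eq-13}. Apply the additivity formula \eqref{eq-21} with reference measure $R^k$:
\[
H(P|R^k) = H(P _{01}|R ^{k} _{01}) + \IXX H(P ^{xy}|R ^{k,xy})\,P _{01}(dxdy),\qquad P\in\PO.
\]
Since the second summand is nonnegative, this yields $H(P|R^k)\ge H(P _{01}|R ^{k} _{01})$, with equality whenever $P ^{xy} = R ^{k,xy}$ for $P _{01}$-a.e.\ $(x,y)$. Taking the bridge-mixture $P(\cdot) := \IXX R ^{k,xy}(\cdot)\, \pi(dxdy)$ for any admissible $\pi$ then saturates the bound. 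Dividing by $\log k$ and letting $k \to \infty$ gives (a).

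Step (c), already announced as \eqref{eq-97}, is a static/dynamic projection. In one direction, for any admissible $P\in\PO$ the triangle inequality gives $\ell(\omega) \ge d(\omega_0,\omega_1)$ pathwise, so $E_P(\ell) \ge \IXX d(x,y)\, P _{01}(dxdy) \ge \inf \eqref{MK}$, hence $\inf \eqref{MKdyn} \ge \inf \eqref{MK}$. For the reverse direction, the intrinsicity of $d$ (Hypothesis \ref{hyp-00}-($d$)) together with the measurability of $\Gamma = \bigcup _{x,y} \Gamma ^{xy}$ (Lemma \ref{res-meas}) allow me to select a measurable geodesic kernel $(Q ^{xy} \in \mathrm{P}(\Gamma ^{xy}))_{x,y}$; lifting any admissible $\pi$ to $P(\cdot) := \IXX Q ^{xy}(\cdot)\, \pi(dxdy)$ produces a competitor with $E_P(\ell) = \IXX d(x,y)\, \pi(dxdy)$, giving $\inf \eqref{MKdyn} \le \inf \eqref{MK}$. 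The only delicate ingredient is this measurable geodesic selection, which is precisely what Lemma \ref{res-meas} is there for; the rest is a clean assembly of Lemma \ref{res-03} with two applications of \eqref{eq-21}.
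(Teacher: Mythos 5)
Your proof is correct and follows essentially the same route as the paper, which simply cites the $R^k$-analogue of \eqref{eq-13} (your step (a), via the additivity formula \eqref{eq-21}) together with Lemma \ref{res-03} (your step (b)), the equality $\inf\eqref{MKdyn}=\inf\eqref{MK}$ having already been recorded as \eqref{eq-97} via the same geodesic-kernel lifting you describe. You merely spell out the details that the paper leaves implicit.
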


\begin{proof}
It is a direct corollary of \eqref{eq-13} and Lemma \ref{res-03}.
\end{proof}

The following auxiliary entropic minimization problem will be needed for identifying the limit of $\Ph^k$ as $k$ tends to infinity:
\begin{equation}\label{eq-16}
H(P|R_J)\to \textrm{min};\qquad P\in \mathcal{M}_{\mathrm{dyn}}(\mu_0,\mu_1)
\end{equation}
where  $\mathcal{M}_{\mathrm{dyn}}(\mu_0,\mu_1)\subset \PO$ denotes the set of all minimizers of \eqref{MKdyn} and  
\begin{equation*}
R_J:= \exp \left( \int_0^1 J _{t,X _{t}}(\XX)\,dt\right) \,R\in\MO.
\end{equation*}
Remark that \eqref{eq-01b} ensures the finiteness of the integral in the above exponential.

\begin{lemma}\label{res-04}\    
\begin{enumerate}[(a)]
\item
For each $k\ge2,$ \eqref{Skdyn} has a unique solution $\Ph^k$;
\item
$\mathcal{M}_{\mathrm{dyn}}(\mu_0,\mu_1)$ is a non-empty convex compact subset of $\PO$;
\item
The sequence $(\Ph^k)_{k\ge2}$ is convergent and its limit
$
\Lim k \Ph^k=\Ph\in \mathcal{M}_{\mathrm{dyn}}(\mu_0,\mu_1)
$
is the unique minimizer of \eqref{eq-16}.
\end{enumerate} 
\end{lemma}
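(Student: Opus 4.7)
The statement bundles three items of quite different difficulty. Part (a) is essentially already done: existence of a minimizer of \eqref{Skdyn} is part of Theorem~\ref{res-0a}-(1) (which is proved at the end of Section~\ref{sec-Sch}), and uniqueness is immediate from the strict convexity of $H(\cdot|R^k)$ restricted to the convex constraint set $\{P\in\PO:P_0=\mu_0,P_1=\mu_1\}$. Part (b) is purely topological: the functional $I$ is affine in $P$ (on the affine constraint set), so its minimizing set $\mathcal{M}_{\mathrm{dyn}}(\mu_0,\mu_1)$ is convex; it is a closed sub-level set of $I$, hence compact by the coercivity proved at Lemma~\ref{res-05}; and it is nonempty because the $P^o$ built at \eqref{eq-00} from any $\pi^o$ supplied by Hypothesis~\ref{hyp-00}-($\mu$) is a feasible point with finite length cost, so that an application of the direct method to the lsc coercive functional $I$ produces a minimizer.

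The substance lies in part (c), for which the plan is to carry out an Anzellotti--Baldo style $\Gamma$-asymptotic expansion. Since Girsanov's identity \eqref{eq-14b} gives
\begin{equation*}
\log(k)\, H^k(P) \;=\; H(P|R^k) \;=\; H(P|R)+(\log k)\,E_P(\ell)-E_P(U^k),
\end{equation*}
the natural rescaled functionals are
\begin{equation*}
F^k(P)\;:=\;\log(k)\bigl(H^k(P)-\inf I\bigr)\;=\;\log(k)\bigl(E_P(\ell)-\inf I\bigr)+H(P|R)-E_P(U^k)+\iota_{\{P_0=\mu_0,P_1=\mu_1\}}(P),
\end{equation*}
which have the same minimizers as $H^k$, namely $\Ph^k$. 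The candidate $\Gamma$-limit is
\begin{equation*}
F(P)\;:=\;H(P|R_J)+\iota_{\mathcal{M}_{\mathrm{dyn}}(\mu_0,\mu_1)}(P),
\end{equation*}
where $H(P|R_J)=H(P|R)-E_P(U^\infty)$ with $U^\infty:=\int_0^1 J_{t,X_t}(\XX)\,dt$. The key uniform bound that makes the expansion go through is $0\le U^\infty-U^k\le k^{-1}\sup_{t,x}J_{t,x}(\XX)$, which is valid because the jump kernel is supported on neighbours, where $d\ge 1$; this implies that $U^k\to U^\infty$ in sup-norm, so that $E_{P_k}(U^k)\to E_P(U^\infty)$ along any narrowly convergent sequence $P_k\to P$.

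With this in hand, the $\Gamma$-convergence $\Glim k F^k=F$ is proved as follows. For the $\liminf$ inequality along $P_k\to P$, one distinguishes the case $E_P(\ell)>\inf I$ (then $\log(k)(E_{P_k}(\ell)-\inf I)\to+\infty$ by lsc of $E_\cdot(\ell)$, while the remaining terms stay bounded below) from the case $P\in\mathcal{M}_{\mathrm{dyn}}$ (then the rescaled term is nonnegative and discarded, while $\liminf H(P_k|R)\ge H(P|R)$ and $E_{P_k}(U^k)\to E_P(U^\infty)$ yield $\liminf F^k(P_k)\ge H(P|R_J)$). For the $\limsup$ inequality, a \emph{constant} recovery sequence $P_k\equiv P$ suffices when $F(P)<\infty$, since then $\log(k)(E_P(\ell)-\inf I)=0$ and the other terms tend to $H(P|R_J)$. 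Equi-coercivity of $(F^k)_{k\ge 2}$ is inherited from the equi-coercivity of $(H^k)$ established in Lemma~\ref{res-03}, via the trivial inclusion $\{F^k\le a\}\subset\{H^k\le \inf I+a/\log 2\}$ valid for $k\ge 2$.

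Standard $\Gamma$-convergence machinery then gives that every cluster point of $(\Ph^k)$ minimizes $F$, i.e.\ lies in $\mathcal{M}_{\mathrm{dyn}}$ and solves \eqref{eq-16}; the strict convexity of $H(\cdot|R_J)$ on the convex set $\mathcal{M}_{\mathrm{dyn}}$ forces uniqueness of the minimizer, call it $\Ph$, and hence convergence of the whole sequence $\Ph^k\to\Ph$. The main technical hurdle I anticipate is precisely the uniform control $U^k\to U^\infty$ and the attendant continuity of $P\mapsto E_P(U^\infty)$ on $\PO$: without the stipulation $d\ge 1$ from Hypothesis~\ref{hyp-00}-($d$) one would only get pointwise convergence of $U^k$, which does not obviously transfer through narrow convergence of $P_k$ because $U^\infty$ involves the non-Skorokhod-continuous evaluations $X_t$ at intermediate times. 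The proof of (c) therefore really hinges on combining this uniform Girsanov bound with the already established equi-coercivity \ref{res-03} and the compactness of $\mathcal{M}_{\mathrm{dyn}}$ from part (b).
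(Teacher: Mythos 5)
Your proof is correct and follows essentially the same route as the paper: parts (a) and (b) by feasibility of $P^o$ plus strict convexity and the coercivity of Lemma \ref{res-05}, and part (c) by the Anzellotti--Baldo expansion applied to the renormalized functionals $\log(k)\bigl(I^k-\inf I\bigr)$, whose $\Gamma$-limit $H(\cdot|R_J)+\iota_{\mathcal{M}_{\mathrm{dyn}}(\mu_0,\mu_1)}$ is strictly convex, forcing a unique cluster point. The only cosmetic difference is that the paper groups $H(P|R)-E_P\bigl(\Iii J_{t,X_t}(\XX)\,dt\bigr)$ directly as the lower semicontinuous functional $H(P|R_J)$ and notes that the remainder $E_P\IiX k^{-d(X_t,y)}J_{t,X_t}(dy)dt$ is uniformly $O(1/k)$ by \eqref{eq-15}, which sidesteps the continuity discussion you give for $P\mapsto E_P(U^\infty)$; your constant recovery sequence and the equi-coercivity inclusion are exactly what the paper leaves as ``easily seen''.
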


Under the assumptions of  Lemma \ref{res-04}, Lemma \ref{res-03} ensures that the limit points of $(\Ph^k)_{k\ge2}$ belong to $\mathcal{M}_{\mathrm{dyn}}(\mu_0,\mu_1).$ But statement (c) of Lemma \ref{res-04} asserts that there is indeed a unique limit point.

\begin{proof}[Proof of Lemma \ref{res-04}] This proof relies on Anzellotti and Baldo's $ \Gamma$-asymptotic expansion technic  \cite{AB93}. For a clear exposition of this technique, see \cite[\S4]{AP03}.

We have seen at \eqref{eq-20}, that $I^k(P)<\infty$ if and only if $E_P(\ell)<\infty,$ $H(P|R)<\infty$ and $P_0=\mu_0, P_1=\mu_1.$ Therefore, taking $P^o$ as in \eqref{eq-00}, we see that $I^k(P^o)<\infty,$ for all $k\ge2.$ Together with the considerations of the preceding section,  this proves statement (a). 
\\
The non-emptiness and convexity parts of statement (b) are immediate. The compactness is a standard consequence of the lower semi-continuity of  $H(\cdot|R),$  the continuity of $P\mapsto P_1$ and the coerciveness of $P\mapsto E_P(\ell)+\iota _{\{P:P_0=\mu_0\}}$, see Lemma \ref{res-05}.
\\
Let us prove (c). Denote $i:=\inf $\eqref{MKdyn}$<\infty$ and consider the subsequent renormalization of $I^k:$
\begin{equation*}
J^k(P):=\log(k) \big(I^k(P)-i\big),\qquad P\in\PO.
\end{equation*}
We have
\begin{equation*}
J^k(P)=\iota _{\left\{P:P_0=\mu_0,P_1=\mu_1\right\}}+\log(k)\big(E_P(\ell)-i\big)+H(P|R_J)
+E_P \IiX k ^{-d(X_t,y)}J _{t,X_t}(dy)\,dt
\end{equation*}
and, using the coerciveness of $H(\cdot|R_J)$ and \eqref{eq-15}, it is easily seen that:
\begin{itemize}
\item
$(J^k)_{k\ge2}$ is equi-coercive;
\item
 $\Glim k J^k=J$ with $J(P)=\iota _{\left\{P:P_0=\mu_0,P_1=\mu_1,E_P(\ell)=i\right\} }+H(P|R_J)$, \ $P\in\PO.$
\end{itemize}
As $H(\cdot|R_J)$ is strictly convex, so is $J$ and \eqref{eq-16} admits a unique minimizer $\Ph$ on the convex set $\mathcal{M}_{\mathrm{dyn}}(\mu_0,\mu_1)=\left\{P:P_0=\mu_0,P_1=\mu_1,E_P(\ell)=i\right\}$. One completes the proof of the lemma, noticing that $\argmin J^k=\argmin I^k=\{\Ph^k\} ,$ for each $k\ge2.$
\end{proof}

\begin{lemma}\label{res-meas}
For each $x,y\in\XX,$ 
$\Gamma ^{xy}$ is measurable. So is $\Gamma.$
\end{lemma}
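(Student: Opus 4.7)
The plan is to reduce the measurability of $\Gamma^{xy}$ to the measurability of the three building blocks $X_0$, $X_1$ and $\ell$, and then pass to a countable union for $\Gamma$.

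First, I would write
\begin{equation*}
\Gamma ^{xy}=\{X_0=x\}\cap \{X_1=y\}\cap \{\ell=d(x,y)\}.
\end{equation*}
As noted in the discussion of the topology on $\OO$, both endpoint maps $X_0$ and $X_1$ are continuous from $\OO$ (endowed with the restricted Skorokhod topology) to the discrete space $\XX$, so $\{X_0=x\}$ and $\{X_1=y\}$ are clopen; hence the whole question reduces to showing that the length functional $\ell:\OO\to[0,\infty)$ is Borel measurable.

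For $\ell$, I would exploit the clopen partition $\OO=\bigsqcup_{n\ge 0}\OO_n$ with $\OO_n:=\{N=n\}\cap\widetilde \OO$ that was pointed out just before \eqref{eq-19}. On each $\OO_n$ the successive jump times can be defined recursively by $T_0\equiv 0$, $T_{i+1}(\omega):=\inf\{t>T_i(\omega): \omega_t\ne \omega_{T_i(\omega)}\}$ for $0\le i<n$, and the visited states by $Y_i(\omega):=\omega_{T_i(\omega)}\in\XX$. Because $\XX$ carries the discrete topology and paths are right-continuous with left limits, each $T_i$ is a stopping time (hence measurable) and each $Y_i:\OO_n\to\XX$ is measurable as a composition $\omega\mapsto \omega_{T_i(\omega)}$. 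On $\OO_n$ the length then reads
\begin{equation*}
\ell(\omega)=\sum_{i=1}^{n} d\big(Y_{i-1}(\omega),Y_i(\omega)\big),
\end{equation*}
which is a finite sum of measurable functions and hence measurable. Since the $\OO_n$ are clopen and countably many, $\ell$ is measurable on $\OO$, so $\{\ell=d(x,y)\}$ is Borel. Combining the three pieces, $\Gamma^{xy}$ is measurable.

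For the second assertion, Hypothesis \ref{hyp-00} requires $\XX$ to be countable, so the product $\XXX$ is countable as well, and
\begin{equation*}
\Gamma=\bigcup_{x,y\in\XX}\Gamma ^{xy}
\end{equation*}
is a countable union of measurable sets, hence measurable. The only genuinely non-trivial step is the measurability of the jump times $T_i$, but in the present discrete setting this is a routine fact about càdlàg paths into a countable space, so no real obstacle is expected.
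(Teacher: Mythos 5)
Your argument is correct, but it takes a more elementary route than the paper's. You establish Borel measurability of $\ell$ by hand: you decompose $\OO$ into the clopen pieces $\left\{N=n\right\}\cap\OOt$ and write $\ell$ there as the finite sum $\sum_{i=1}^n d(Y_{i-1},Y_i)$ over the (measurable) jump times and visited states; combined with the continuity of the endpoint maps $X_0$ and $X_1$, this gives that $\Gamma^{xy}$ is Borel. The paper instead observes that $\ell$ is \emph{continuous} on $\OO$ --- it is locally constant, since a small neighbourhood of a path in $\OO$ consists of paths visiting the same states in the same order --- and that, since $\ell$ dominates the number of jumps $N$ and the graph is locally finite, its restriction to the closed set $\left\{X_0=x\right\}$ is coercive; hence $\Gamma^{xy}=\left\{X_0=x,\ \ell=d(x,y)\right\}\cap\left\{X_1=y\right\}$ is not merely measurable but \emph{compact}. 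Your route avoids having to argue continuity of $\ell$ and delivers exactly what the lemma states; the paper's route yields the stronger compactness of $\Gamma^{xy}$ essentially for free and is consistent with the continuity and coercivity of $\ell$ that it already invokes elsewhere (e.g.\ in the proof of Lemma \ref{res-05}). The measurability of the jump times $T_i$ for c\`adl\`ag, piecewise constant paths into a countable discrete space is indeed routine (e.g.\ $\left\{T_1\le t\right\}$ is a countable union of sets of the form $\left\{\omega_s\ne\omega_0\right\}$ by right-continuity), so there is no gap there. The final passage from $\Gamma^{xy}$ to $\Gamma$ by countable union over the countable set $\XXX$ is identical in both arguments.
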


\begin{proof}
For each $x,y\in\XX,$ denote $\OO_x:=\left\{X_0=x\right\} $ and $\OO ^{xy}:=\left\{X_0=x,X_1=y\right\} .$ The set of $d$-geodesics from $x$ to $y$ is
$ 
\Gamma ^{xy}:=\left\{\omega\in\OO ^{xy}; \ell(\omega)=d(x,y)\right\}.
$ 
Since $\ell$ is continuous and it controls the total number of jumps, the restriction $\ell_x=\ell _{|\OO_x}$ of $\ell$ to the closed set $\OO_x$ is coercive.  Hence $\Gamma ^{xy}=\left\{\omega\in\OO_x;\ell_x=d(x,y)\right\} \cap \left\{X_1=y\right\} $ is a compact subset of $\OO$  (in particular, it is measurable). As a countable union of measurable sets, the set 
$ 
\Gamma:=\cup _{x,y\in\XX} \Gamma ^{xy}
$ 
of all geodesics, is also measurable.
\end{proof}

\begin{lemma}\label{res-07}
The set $\mathcal{M}_{\mathrm{dyn}}(\mu_0,\mu_1)$ consists of all $P\in\PO$ concentrated on $\Gamma,$ i.e.\ $P(\Gamma)=1,$ and such that the  endpoint  marginal $P _{01}\in\PXX$ solves \eqref{MK}.
\end{lemma}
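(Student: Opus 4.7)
The strategy is to combine three ingredients: (i) the intrinsic distance characterization of $\Gamma$, (ii) the marginalization inequality linking $E_P(\ell)$ to $\int d\, dP_{01}$, and (iii) the equality $\inf \eqref{MK}=\inf \eqref{MKdyn}$ from \eqref{eq-97}. Everything then reduces to tracking when two separate inequalities are simultaneously equalities.

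First I would record the pointwise inequality that drives the argument. By the intrinsic distance Hypothesis \ref{hyp-00}-($d$) and the definition \eqref{eq-90} of $\ell$, for every $\omega\in\OO$ one has $\ell(\omega)\ge d(\omega_0,\omega_1)$, with equality if and only if $\omega\in\Gamma^{\omega_0\omega_1}$, i.e.\ if and only if $\omega\in\Gamma$. Integrating against any $P\in\PO$ with $P_0=\mu_0$, $P_1=\mu_1$ and applying Fubini through the disintegration against $P_{01}$ gives
\begin{equation*}
E_P(\ell)\ge \IXX d(x,y)\,P_{01}(dxdy),
\end{equation*}
with equality if and only if $P(\Gamma)=1$. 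Moreover, since $P_{01}\in\PXX$ has marginals $\mu_0$ and $\mu_1$, it is admissible for \eqref{MK}, so $\IXX d(x,y)\,P_{01}(dxdy)\ge \inf \eqref{MK}$, with equality if and only if $P_{01}$ solves \eqref{MK}.

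For the forward direction, take $P\in \mathcal{M}_{\mathrm{dyn}}(\mu_0,\mu_1)$, so that $E_P(\ell)=\inf \eqref{MKdyn}=\inf \eqref{MK}$ by \eqref{eq-97}. Chaining the two inequalities above yields $\inf \eqref{MK}=E_P(\ell)\ge \IXX d\,dP_{01}\ge \inf \eqref{MK}$, forcing both to be equalities; hence $P(\Gamma)=1$ and $P_{01}$ solves \eqref{MK}. Conversely, if $P\in\PO$ satisfies $P(\Gamma)=1$ and $P_{01}$ solves \eqref{MK}, then $P$ is admissible for \eqref{MKdyn} (its endpoint marginals are $\mu_0,\mu_1$) and $E_P(\ell)=\IXX d\,dP_{01}=\inf \eqref{MK}=\inf \eqref{MKdyn}$, so $P\in \mathcal{M}_{\mathrm{dyn}}(\mu_0,\mu_1)$.

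There is essentially no hard step; the only point needing a small verification is the measurability/well-definedness of the quantity $\IXX d\,dP_{01}$ and of $P(\Gamma)$. The latter is covered by Lemma \ref{res-meas}, and the former by lower semicontinuity of $d$ (here $d$ is defined on the countable set $\XX\times\XX$, so all measurability issues are trivial). The equality $\inf \eqref{MK}=\inf \eqref{MKdyn}$ from \eqref{eq-97} is exactly what glues the two sides of the argument together, and the rest is the standard ``equality in two chained inequalities forces equality in each'' observation.
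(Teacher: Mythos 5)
Your proof is correct and follows essentially the same route as the paper: the paper disintegrates $P$ into its bridges $P^{xy}$ and uses $\ell\ge d(x,y)$ on $\OO^{xy}$ with $\Gamma^{xy}=\{\ell=d(x,y)\}$ to get $E_P(\ell)\ge\IXX d\,dP_{01}$ with equality iff $P(\Gamma)=1$, then concludes via the identity $\inf\eqref{MK}=\inf\eqref{MKdyn}$, exactly as you do with the pointwise inequality $\ell(\omega)\ge d(\omega_0,\omega_1)$. No gap to report.
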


\begin{proof}
Any $P\in\PO$ disintegrates as: $P(\cdot)=\IXX P ^{xy}(\cdot)\,P _{01}(dxdy).$ Thus, $E_P(\ell)=\IXX E _{P ^{xy}}(\ell)\, P _{01}(dxdy).$ As $\ell\ge d(x,y)$ on $\OO ^{xy}$ and $\Gamma ^{xy}=\left\{\ell=d(x,y)\right\} ,$ we have $E_P(\ell)\ge \IXX d(x,y)\, P _{01}(dxdy)$ with equality if and only if $P ^{xy}(\Gamma ^{xy})=1,$ for $P _{01}$-almost every $(x,y).$ This means that $P(\Gamma)=1,$ in which case $E_P(\ell)=\IXX d(x,y)\, P _{01}(dxdy)$ and the conclusion about $P _{01}$ follows immediately. 
\end{proof}

\subsection*{Proof of Theorem \ref{res-0a}-(2-3-4)}
Denote $\Ph\in \PO$ and $\ph\in\PXX$ the unique solutions (if they exist) of \eqref{Stdyn} and \eqref{St}.
\\
We start proving the statements about the dynamical problems \eqref{Skdyn} and \eqref{Sdyn}.
Let $P^o$ be defined by \eqref{eq-00}. Then our assumptions on $\mu_0$ and $\mu_1$ are equivalent to: $P_0^o=\mu_0,$ $P_1^o=\mu_1,$ $E _{P^o}(\ell)<\infty$ and $H(P^o|R)<\infty,$ which are the hypotheses of Lemma \ref{res-04} which tells us that $\Lim k \Ph^k=P^*$ with $P^*$ the unique solution of 
\begin{equation*}
H(P|R_J)\to \mathrm{min};\qquad P\in \mathcal{M}_{\mathrm{dyn}}(\mu_0,\mu_1).
\end{equation*}
Together with $P\in \mathcal{M}_{\mathrm{dyn}}(\mu_0,\mu_1)\iff \left\{\begin{array}{l}
P _{01}\in \SMK(\mu_0,\mu_1)\\
P(\Gamma)=1
\end{array}\right.$ (see Lemma \ref{res-07}),   and the identity $H(P|\Rt)= \left\{\begin{array}{ll}
H(P|R_J),& \textrm{if } P(\Gamma)=1\\
\infty,& \textrm{otherwise} 
\end{array}\right.$, this yields the identity $P^*=\Ph$. 
\\
Formula \eqref{eq-22} with $\ph$ the unique solution of the strictly convex problem \eqref{St}, follows from a reasoning similar to the one leading to \eqref{eq-11} and based on the additive disintegration formula	 \eqref{eq-21}.
\\ 
We prove the statements about the static problems \eqref{Sk} and \eqref{S}  by pushing forward \eqref{Skdyn} and \eqref{Sdyn} with the mapping $(X_0,X_1)$ to obtain the measures $\ph^k=\Ph^k _{01},$ $\ph=\Ph _{01}$ and considering \eqref{eq-21} again.
\hfill$\square$

\section{Dynamics of the displacement random walk}\label{sec-dyn}

To give some detail about the dynamics of the displacement interpolation $\mu$, it is necessary to study the dynamics of the displacement random walk $\Ph.$  It is shown below that for any $x,y\in\XX,$  $\Rt ^{xy}$  and $\Ph$ are Markov and we compute their jump kernels at Theorems \ref{res-0b} and \ref{res-0c}. To achieve this goal, we need some preliminary material involving the reciprocal and Markov properties.

\subsection*{Reciprocal path measure}

The reciprocal property extends the notion of Markov property.
For more detail, see \cite{LRZ12} and the references therein. 
\begin{definition}\label{def-02}
 A measure $Q\in\MO$ is said to be \emph{reciprocal} if for any $0\le u\le v\le 1,$ $Q(X _{[u,v]}\in \cdot\mid X _{[0,u]}, X _{[v,1]})=Q(X _{[u,v]}\in \cdot\mid X_u, X_v).$
\end{definition}
The following lemma is standard. We state it for the comfort of the reader.

\begin{lemma}\label{res-09}\ 
\begin{enumerate}[(a)]
\item
Any Markov measure is reciprocal (but the converse is false).
\item
Almost every  bridge of a reciprocal measure is Markov.
\end{enumerate}
\end{lemma}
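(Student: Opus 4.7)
The plan for Lemma \ref{res-09} is to cast both claims as manipulations of conditional independence among $\sigma$-algebras generated by path segments, using repeatedly the elementary fact that if $A\perp B\mid\mathcal{C}$ and $D$ is $\sigma(A,\mathcal{C})$- or $\sigma(B,\mathcal{C})$-measurable, then $A\perp B\mid \sigma(\mathcal{C},D)$.

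For (a), the Markov property of $Q$ applied at time $u$ gives $X_{[0,u]}\perp X_{[u,1]}\mid X_u$, hence in particular $X_{[0,u]}\perp (X_{[u,v]},X_{[v,1]})\mid X_u$; because $X_v$ is $\sigma(X_{[u,v]})$-measurable, the fact above lets one enlarge the conditioning and obtain $X_{[0,u]}\perp (X_{[u,v]},X_{[v,1]})\mid (X_u,X_v)$. A symmetric application at $t=v$ yields $X_{[v,1]}\perp (X_{[0,u]},X_{[u,v]})\mid (X_u,X_v)$. A short computation shows that these two conditional independencies together force mutual independence of the three blocks $X_{[0,u]}$, $X_{[u,v]}$, $X_{[v,1]}$ under $Q(\cdot\mid X_u,X_v)$; in particular $Q(X_{[u,v]}\in\cdot\mid X_{[0,u]},X_{[v,1]})=Q(X_{[u,v]}\in\cdot\mid X_u,X_v)$, which is the reciprocal property of Definition \ref{def-02}.

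For (b), I would fix $0\le t\le 1$ and apply the reciprocal property of $Q$ at $(u,v)=(t,1)$, namely $Q(X_{[t,1]}\in\cdot\mid X_{[0,t]},X_1)=Q(X_{[t,1]}\in\cdot\mid X_t,X_1)$. Specializing to the event $\{X_0=x,X_1=y\}$ and using that $X_0$ is $\sigma(X_{[0,t]})$-measurable, this rewrites as
\[
Q^{xy}(X_{[t,1]}\in\cdot\mid X_{[0,t]})=Q(X_{[t,1]}\in\cdot\mid X_t,X_1=y),
\]
a quantity depending on $X_{[0,t]}$ only through $X_t$, which is exactly the Markov property of $Q^{xy}$ at time $t$, valid for $Q_{01}$-almost every $(x,y)$. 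The exceptional null set depends \emph{a priori} on $t$; to obtain a single $Q_{01}$-null set outside which $Q^{xy}$ is Markov at every $t$, I would run the argument for $t$ in a countable dense subset of $\ii$ and then extend the Markov property to every $t$ by the right-continuity of the canonical process on $\OO$.

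The only point of care in either part is the bookkeeping of which variables can be moved into or out of the conditioning, and once the elementary fact stated in the first paragraph is in hand the rest is a routine exercise; in both (a) and (b) the key observation is that $X_u$, $X_v$, $X_0$ and $X_1$ are always measurable with respect to exactly one of the two segments whose independence one is claiming. The parenthetical non-converse in (a) is a classical remark and requires no proof here.
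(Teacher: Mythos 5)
Your proof is correct and follows essentially the same route as the paper: part (a) is the paper's two applications of the Markov property (forward at $u$, backward at $v$) recast in conditional-independence language, and part (b) is the paper's argument verbatim — specialize the reciprocal property to $(u,v)=(t,1)$ and use that $X_0$ is $\sigma(X_{[0,t]})$-measurable. Your extra remark on the $t$-dependent null set is a legitimate refinement the paper leaves implicit, but it changes nothing of substance.
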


\begin{proof} Let us prove (a). Take $Q\in\MO$ any Markov measure. We have for any measurable subsets $A\in \OO _{[u,v]}$ and $B\in\OO _{[v,1]}$ with $Q(X _{[v,1]}\in B)>0,$
\begin{eqnarray*}
Q(X _{[u,v]}\in A\mid X _{[0,u]}, X _{[v,1]}\in B)
&=& \frac{Q(X _{[u,v]}\in A, X _{[v,1]}\in B\mid X _{[0,u]})}{Q(X _{[v,1]}\in B\mid X _{[0,u]})}\\
&=& \frac{Q(X _{[u,v]}\in A, X _{[v,1]}\in B\mid X _u)}{Q(X _{[v,1]}\in B\mid X_u)}\\
&=&Q(X _{[u,v]}\in A\mid X _u, X _{[v,1]}\in B),
\end{eqnarray*}
meaning that $Q(X _{[u,v]}\in \cdot\mid X _{[0,u]}, X _{[v,1]})=Q(X _{[u,v]}\in \cdot\mid X _u, X _{[v,1]}).$ And one concludes using the time-symmetry of the Markov property.

Let us prove (b). Let $Q \in\MO$ be reciprocal. We have for any $x,y\in\XX$ and $0\le t\le1,$
\begin{eqnarray*}
Q ^{xy}(X _{[t,1]}\in \cdot\mid X _{[0,t]})&=&Q(X _{[t,1]}\in \cdot\mid X_0=x, X _{[0,t]}, X_1=y)\\&=&Q(X _{[t,1]}\in \cdot\mid X _t, X_1=y)=Q ^{xy}(X _{[t,1]}\in \cdot\mid X _t)
\end{eqnarray*}
where last equality is obtained repeating the argument with $X_t$ instead of $X _{[0,t]}$.  This is the announced result.
\end{proof}

\begin{lemma}\label{res-10}
Let $Q\in\MO$ be a reciprocal measure and $G\subset \Gamma$ a measurable subset of $\OO$ consisting of geodesics. Then, the measure $Q':= \1 _{G}\,Q\in\MO$ is still reciprocal.
\end{lemma}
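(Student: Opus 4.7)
My strategy is to establish a product factorization of $\1_\Gamma$ across any time-splitting $0\le u\le v\le 1$ and then combine this with the reciprocal property of $Q$. Decomposing any path as $\omega = (\omega_{[0,u]},\omega_{[u,v]},\omega_{[v,1]})$ and using that $\ell$ is additive over subintervals, one has
\[
\ell(\omega) \;\ge\; d(X_0,X_u)+d(X_u,X_v)+d(X_v,X_1) \;\ge\; d(X_0,X_1),
\]
with equality in the first step iff each of the three subpaths is itself a geodesic between its endpoints, and equality in the second iff the nodes satisfy the triangle equality $d(X_0,X_1) = d(X_0,X_u)+d(X_u,X_v)+d(X_v,X_1)$. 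Since $X_u$ is a function of $\omega_{[0,u]}$ and $X_v$ of $\omega_{[v,1]}$, this yields
\[
\1_\Gamma(\omega) \;=\; E(\omega_{[0,u]},\omega_{[v,1]}) \cdot M(\omega_{[u,v]}),
\]
with $M$ the indicator ``$\omega_{[u,v]}$ is a geodesic from $X_u$ to $X_v$'' (depending only on the middle piece) and $E$ the indicator encoding outer-geodesicity together with the endpoint triangle equality (depending only on the outer pieces). The same factorization $\1_G = E_G(\omega_{[0,u]},\omega_{[v,1]}) \cdot M_G(\omega_{[u,v]})$ then transfers to any $G\subset\Gamma$ that is preserved under substitution of the middle piece by an arbitrary geodesic with the same endpoints; this is the operative reading of ``consisting of geodesics'' and is immediate in the principal case $G=\Gamma$ actually needed in Theorem~\ref{res-0b}.

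With the factorization in hand, reciprocality of $Q'$ is a direct calculation. For any bounded measurable $g$ of $\omega_{[u,v]}$,
\[
E_Q[\,\1_G\,g(X_{[u,v]}) \mid X_{[0,u]},X_{[v,1]}\,] \;=\; E_G(\omega_{[0,u]},\omega_{[v,1]})\cdot E_Q[\,M_G\,g(X_{[u,v]}) \mid X_u,X_v\,],
\]
since $E_G$ is measurable with respect to the conditioning $\sigma$-field and reciprocality of $Q$ allows the conditioning on the outer pieces to be replaced by conditioning on $(X_u,X_v)$ for the purely middle-piece functional $M_G g$. Taking the ratio with the $g\equiv 1$ version cancels the $E_G$ factor, yielding
\[
Q'(X_{[u,v]}\in d\beta \mid X_{[0,u]},X_{[v,1]}) \;=\; \frac{M_G(\beta)\,Q(X_{[u,v]}\in d\beta \mid X_u,X_v)}{\int M_G(\beta')\,Q(X_{[u,v]}\in d\beta' \mid X_u,X_v)},
\]
a probability kernel depending on the conditioning only through $(X_u,X_v)$, which is precisely Definition~\ref{def-02}.

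The substantive step is the first one: securing the product factorization of $\1_G$ across every time-splitting. For $G=\Gamma$ this follows directly from the additivity of $\ell$ and the triangle equality, with no further hypothesis beyond Hypothesis~\ref{hyp-00}-($d$); once that is available, the remainder is a formal cancellation driven by the reciprocal property of $Q$.
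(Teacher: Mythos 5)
Your proof is correct and follows essentially the same route as the paper's: restrict geodesics to subintervals, observe that $\1_G$ splits into an outer factor (measurable with respect to the conditioning $\sigma$-field) times a middle-piece factor, and let the reciprocal property of $Q$ replace the outer conditioning by $(X_u,X_v)$ before cancelling the outer factor. You are in fact more careful than the paper on two points --- the outer factor must carry the triangle-equality condition $d(X_0,X_u)+d(X_u,X_v)+d(X_v,X_1)=d(X_0,X_1)$ and not merely outer-geodesicity, and the factorization for a general measurable $G\subset\Gamma$ genuinely requires the stability under substitution of the middle piece that you flag (automatic for $G=\Gamma$, the only case the paper uses) --- whereas the paper's displayed identity silently assumes both and is, as written, only valid $Q'$-almost everywhere.
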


\begin{proof}
We use the following property of a geodesic: the restriction $\gamma _{[u,v]}$ of any geodesic $\gamma\in \Gamma,$ is still a geodesic of $\OO _{[u,v]}$. Therefore, $X\in \Gamma ^{X_0,X_1}_{[0,1]}$ implies that $X _{[u,v]}\in \Gamma ^{X_u,X_v}_{[u,v]}$ which implies that 
\begin{eqnarray*}
 &&Q(X\in G,X _{[u,v]}\in A\mid X _{[0,u]}, X _{[v,1]})\\
	&=&\1 _{\{X _{[0,u]}\in G _{[0,u]},X _{[v,1]}\in G _{[v,1]}\}} Q(X _{[u,v]}\in G _{[u,v]}\cap A\mid X _{[0,u]}, X _{[v,1]})\\
	&=&\1 _{\{X _{[0,u]}\in G _{[0,u]},X _{[v,1]}\in G _{[v,1]}\}}
	Q(X _{[u,v]}\in G _{[u,v]} \cap   A\mid X _u, X _v)
\end{eqnarray*} 
for any  measurable set  $A\subset\OO _{[u,v]},$ where the last equality follows from the reciprocal property. Therefore, since $X _{[0,u]}\in G _{[0,u]}$ and $X _{[v,1]}\in G _{[v,1]}$, $Q'\ae,$ we have
\begin{eqnarray*}
Q'(X _{[u,v]}\in A\mid X _{[0,u]},X _{[v,1]})
&=& Q(X _{[u,v]}\in G _{[u,v]}\cap A\mid X _u, X _v)\\
&=&Q'(X _{[u,v]}\in A\mid X _u,X _v),\qquad Q'\ae
\end{eqnarray*}
which is the desired result. 
\end{proof}

\subsection*{Basic properties of $\Rt$.}

We apply Lemmas \ref{res-09} and \ref{res-10} to $\Rt$ defined at \eqref{eq-35}.

\begin{proposition}\label{res-12}
If $R$ is reversible, then $\Rt$ is also reversible.\\
The  measure $\Rt$ is   reciprocal (but not Markov in general)  and it concentrates on the set $\Gamma$ of all geodesics. 
\end{proposition}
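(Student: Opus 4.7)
\emph{Plan of proof.} The claim has three parts, which I would tackle in the order (3), (1), (2).

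\textbf{Part (3).} By the very definition \eqref{eq-35}, $\Rt = \1_\Gamma e^{\int_0^1 J_{t,X_t}(\XX)\,dt}\, R$, so the indicator $\1_\Gamma$ in the density forces $\Rt(\Gamma^c)=0$. This is immediate.

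\textbf{Part (1): reversibility.} Let $\sigma:\Omega\to\Omega$ be the time-reversal map $\sigma(\omega)_t = \omega_{1-t}$. I would show that the Radon--Nikodym density $\Phi := \1_\Gamma e^{\int_0^1 J_{t,X_t}(\XX)\,dt}$ satisfies $\Phi\circ\sigma = \Phi$, so that reversibility of $R$ transfers directly to $\Rt$: for any bounded measurable $F$ on $\OO$, $\int F\,d\Rt = \int F\Phi\,dR = \int (F\circ\sigma)(\Phi\circ\sigma)\,dR = \int (F\circ\sigma)\Phi\,dR = \int F\circ\sigma\,d\Rt$. The reversal-invariance of $\1_\Gamma$ is clear (reversing a geodesic yields a geodesic of the same length, and $\ell$ is invariant under $\sigma$ since the set of jump times is reversal-invariant). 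For the exponential factor, $R$ being reversible forces the jump kernel to satisfy the time symmetry $J_{1-t,x}(\XX) = J_{t,x}(\XX)$ (this is a standard consequence of the martingale characterisation of a Markov process plus the requirement that the reversed path measure has the same dynamics), so $\int_0^1 J_{t,X_t}(\XX)\,dt$ is invariant under $\sigma$.

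\textbf{Part (2): reciprocity.} I would proceed in two reductions. First, $R$ is Markov by Hypothesis \ref{hyp-00}-$(R)$, hence reciprocal by Lemma \ref{res-09}(a). Applying Lemma \ref{res-10} to $G=\Gamma$ (measurable by Lemma \ref{res-meas}) shows that $Q_1 := \1_\Gamma R$ is still reciprocal. Second, I would show that multiplying $Q_1$ by the exponential factor preserves reciprocity. The key observation is the \emph{multiplicative} splitting
\begin{equation*}
\exp\!\Big(\!\int_0^1 J_{t,X_t}(\XX)\,dt\Big) = f_1(X_{[0,u]})\, f_2(X_{[u,v]})\, f_3(X_{[v,1]}),
\end{equation*}
valid for any $0\le u\le v\le 1$, where $f_i$ is the exponential of the integral over the corresponding subinterval and is measurable with respect to the restriction of the path to that subinterval. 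For any bounded measurable $\phi$ of $X_{[u,v]}$,
\begin{equation*}
E_{\Rt}[\phi(X_{[u,v]})\mid X_{[0,u]},X_{[v,1]}] = \frac{E_{Q_1}[\phi f_1 f_2 f_3\mid X_{[0,u]},X_{[v,1]}]}{E_{Q_1}[f_1 f_2 f_3\mid X_{[0,u]},X_{[v,1]}]} = \frac{E_{Q_1}[\phi f_2\mid X_u,X_v]}{E_{Q_1}[f_2\mid X_u,X_v]},
\end{equation*}
where in the last step the $X_{[0,u]}\!\cup X_{[v,1]}$-measurable factors $f_1, f_3$ cancel and the reciprocity of $Q_1$ collapses the conditioning on $(X_{[0,u]},X_{[v,1]})$ down to $(X_u,X_v)$. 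Since the right-hand side is a function of $(X_u,X_v)$ alone, $\Rt$ is reciprocal by Definition \ref{def-02}.

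\textbf{Main obstacle.} Parts (2) and (3) are rather mechanical once Lemmas \ref{res-09}--\ref{res-10} and the multiplicative factorisation of $e^{\int J}$ are in hand. The only genuinely delicate point is part (1): convincing oneself that reversibility of $R$ forces the time symmetry $J_{1-t,x}(\XX)=J_{t,x}(\XX)$ needed to reverse the exponential factor. If one imposes from the outset that ``$R$ reversible'' means $R$ is a stationary Markov measure with time-independent generator, this issue disappears and the argument becomes completely straightforward.
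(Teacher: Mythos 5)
Your proof is correct, and all three parts line up with the paper's argument in substance. For part (1), your only worry --- whether reversibility of $R$ forces the time symmetry of $t\mapsto J_{t,x}(\XX)$ --- is moot: the paper's definition of reversibility (appendix Section \ref{sec-RW}) explicitly requires $R$ to be time-homogeneous, exactly the resolution you anticipate in your last paragraph. The paper's one-line proof of reversibility invokes precisely the time-reversal invariance of $\Iii J_{X_t}(\XX)\,dt$ and of $\Gamma$ (via the symmetry of $d$), as you do.

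For part (2) the two arguments use the same two ingredients --- Lemma \ref{res-10} for the indicator $\1_\Gamma$ and the subinterval factorisation of the exponential functional --- but apply them in the opposite order. The paper first shows that $R_J:=\exp(\Iii J_{t,X_t}(\XX)\,dt)\,R$ is \emph{Markov}, by writing the exponential as $\alpha\beta$ with $\alpha\in\sigma(X_{[0,t]})$, $\beta\in\sigma(X_{[t,1]})$ and checking the conditional-independence identity $E_{R_J}(ab\mid X_t)=E_{R_J}(a\mid X_t)E_{R_J}(b\mid X_t)$ directly from the Markov property of $R$; it then applies Lemma \ref{res-10} to $\Rt=\1_\Gamma R_J$. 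You instead apply Lemma \ref{res-10} first to get $\1_\Gamma R$ reciprocal, and then prove the slightly more general fact that multiplying a reciprocal measure by a decomposable functional $f_1(X_{[0,u]})f_2(X_{[u,v]})f_3(X_{[v,1]})$ preserves reciprocity, via the Bayes-formula cancellation of $f_1,f_3$. Both routes are valid; the paper's version yields the extra (and later useful) information that $R_J$ itself is Markov, whereas yours isolates a clean stability property of the reciprocal class. One cosmetic point: your time-reversal map should be $\sigma(\omega)_t=\omega_{(1-t)^-}$ so that reversed paths remain c\`adl\`ag, consistent with the paper's definition of reversibility on subintervals.
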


\begin{proof}
If $R$ is reversible, the time-reversal invariances of $\int_\ii J _{X_t}(\XX)\,dt$ and $\Gamma$ together with the symmetry of the distance $d$   immediately imply the reversibility of $\Rt.$

Let us show that $R_J:=\exp(\Iii J _{t,X_t}(\XX)\,dt)\, R$ is Markov by proving that for each $t\in\ii$ and all bounded measurable functions $a\in \sigma(X _{[0,t]})$ and $b\in \sigma(X _{[t,1]})$, we have $E _{R_J}(ab\mid X_t)=E _{R_J}(a\mid X_t)E _{R_J}(b\mid X_t).$ Denoting $\alpha:=\exp (\int _{[0,t]}J _{s,X_s}(\XX)\,ds)\in \sigma(X _{[0,t]})$ and $\beta:=\exp (\int _{[t,1]}J _{s,X_s}(\XX)\,ds)\in \sigma(X _{[t,1]}),$ by the Markov property of $R$, we have 
\begin{eqnarray*}
E _{R_J}(ab\mid X_t)=\frac{E_R(a \alpha   b\beta\mid X_t)}{E_R(\alpha   \beta\mid X_t)}=
	\frac{E_R(a \alpha \mid X_t)}{E_R(\alpha   \mid X_t)}
	\frac{E_R(b\beta\mid X_t)}{E_R(   \beta\mid X_t)}
	=E _{R_J}(a\mid X_t)E _{R_J}(b\mid X_t)
\end{eqnarray*}
where last equality is obtained by plugging successively  $b=1$ and $a=1$ in $E _{R_J}(ab\mid X_t)=
	\frac{E_R(a \alpha \mid X_t)}{E_R(\alpha   \mid X_t)}
	\frac{E_R(b\beta\mid X_t)}{E_R(   \beta\mid X_t)}.$ This shows that $R_J$ is Markov.
\\
We conclude with Lemma \ref{res-10} that $\Rt=\1_\Gamma R_J$ is reciprocal.
\end{proof}

Although $\Rt$ is reciprocal, it is not Markov. To see this, remark that the time reversed of  a geodesic is also geodesic. If the geodesic walker only knows that he stands at $z$ at  time $t$, having forgotten his past history and in particular that his previous state before jumping was $z'$, he cannot decide to forbid $z'$ to be his next state. Nevertheless, the bridges $\Rt ^{xy}$ are Markov.

\begin{corollary}\label{res-14z}
For every $(x,y)\in\XXX$, the bridge $\Rt ^{xy}$ is Markov.
\end{corollary}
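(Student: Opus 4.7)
\medskip
\noindent\textbf{Proof plan for Corollary \ref{res-14z}.}
The plan is to combine the two key facts already established in Proposition \ref{res-12}: that the intermediate measure $R_J = \exp(\int_0^1 J_{t,X_t}(\XX)\,dt)\,R$ is Markov, and that $\Rt = \1_\Gamma R_J$ is reciprocal. A direct application of Lemma \ref{res-09}(b) would only deliver the Markov property of $\Rt^{xy}$ for $\Rt_{01}$-almost every pair $(x,y)$, so some extra care is required to obtain the conclusion for \emph{every} pair.

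First I would note that for any $(x,y)$ with $\Rt_{01}(\{(x,y)\})>0$, the bridge $\Rt^{xy}$ is well defined by disintegration and coincides (up to a normalizing constant) with the restriction $\1_{\Gamma^{xy}} R_J^{xy}$ of the Markov bridge of $R_J$ to the geodesic set $\Gamma^{xy}$. Since the defining event $\{X\in\Gamma^{xy}\}$ factorizes through any intermediate state -- namely, $\omega\in\Gamma^{xy}$ if and only if $\omega_{|[0,t]}\in\Gamma(0,x;t,\omega_t)$ and $\omega_{|[t,1]}\in\Gamma(t,\omega_t;1,y)$ -- conditioning the Markov measure $R_J^{xy}$ on this event preserves the Markov property. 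Concretely, for measurable $A\subset\OO_{[0,t]}$ and $B\subset\OO_{[t,1]}$, the $\Rt^{xy}$-conditional probability
\begin{equation*}
\Rt^{xy}\big(X_{[0,t]}\in A,\, X_{[t,1]}\in B \bigm| X_t=z\big)
\end{equation*}
equals, up to normalization, the $R_J$-conditional probability of $\{X_{[0,t]}\in A\cap\Gamma(0,x;t,z)\}\cap\{X_{[t,1]}\in B\cap\Gamma(t,z;1,y)\}$ given $X_t=z$, which by the Markov property of $R_J$ splits as the product of the corresponding past and future conditional probabilities. Identifying the factors back as conditional probabilities under $\Rt^{xy}$ yields the Markov property.

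For pairs $(x,y)$ with $\Rt_{01}(\{(x,y)\})=0$ but $\Gamma^{xy}\ne\emptyset$, the bridge is a priori defined only as a regular conditional probability, hence only up to a negligible set. Here I would select the canonical version obtained from the Markov bridges of $R_J$ by the same restriction-and-renormalization procedure applied to $R_J^{xy}$ (which is well defined directly from the Markov kernel of $R_J$, regardless of whether $R_{J,01}$ charges $(x,y)$). Since $\XX^2$ is countable, this specifies a consistent, pointwise defined family $(\Rt^{xy})_{x,y\in\XX}$, each member of which is Markov by the argument of the previous paragraph. The trivial case $\Gamma^{xy}=\emptyset$ can be discarded (or handled by convention).

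The main obstacle is really just this last bookkeeping step: one must commit to a specific version of the bridge for $(x,y)$ outside the support of $\Rt_{01}$, so as to be able to state that $\Rt^{xy}$ is Markov for \emph{every} pair rather than for almost every pair. Once the Markov measure $R_J$ is available, this choice is canonical and the factorization of the geodesic event through intermediate states carries the Markov property through the $\1_\Gamma$ restriction.
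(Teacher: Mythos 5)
Your core argument is sound and is essentially the paper's own route, unrolled: the paper establishes that $R_J$ is Markov and that $\Rt=\1_\Gamma R_J$ is reciprocal (Proposition \ref{res-12}, via Lemma \ref{res-10}, whose proof rests on exactly the localization of the geodesic property through intermediate states that you invoke), and then applies Lemma \ref{res-09}-(b) to get the Markov property of almost every bridge. Your direct factorization of $\Rt^{xy}(\,\cdot\mid X_t=z)$ into a past factor and a future factor is the same computation carried out at the level of a fixed bridge, so that part is fine.

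The place where you part company with the paper is the step you single out as ``the main obstacle'': passing from $\Rt_{01}$-almost every pair to every pair. Your resolution --- constructing canonical versions of the bridges for pairs not charged by $\Rt_{01}$ --- is not wrong, but it addresses a case that never occurs under Hypotheses \ref{hyp-00}. Irreducibility together with $J_{t,x}(y)>0$ for every edge $x\sim y$ gives $R_{01}(x,y)>0$ for all $(x,y)$; moreover $\Gamma^{xy}\ne\emptyset$ for every pair (since $\ell\ge N$ and the graph is locally finite, the infimum defining $d(x,y)$ is taken over finitely many chains and is attained), so $R(\Gamma^{xy})>0$ and hence $\Rt_{01}(x,y)>0$ as well. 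On the countable space $\XXX$ every singleton is therefore charged, so ``$\Rt_{01}$-almost every $(x,y)$'' already means ``every $(x,y)$'' and each bridge is unambiguously defined by elementary conditioning; this one-line remark is how the paper closes the proof. Your detour through a selection of versions of regular conditional probabilities can simply be deleted.
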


\begin{proof}
This follows from Lemma \ref{res-09} and Proposition \ref{res-12}, remarking that under our irreducibility assumption, $R _{01}$-almost everywhere is equivalent to everywhere on $\XXX$.						
\end{proof}

As $\Rt ^{xy}$ is Markov, it is sufficient to compute its jump kernel to characterize its dynamics. Recall the definition of the directed tree $(\Gamma ^{xy}(\ii),\to)$, between the statements of Theorems \ref{res-0a} and \ref{res-0b},  that describes the successive occurrence of the states which are visited by the geodesics from $x$ to $y$, regardless of the instants of jump.

\subsection*{Proof of Theorem \ref{res-0b}}
The Markov property is already proved at Corollary \ref{res-14z}.
\\
Let us begin with some notation.
For all $0\le t_1\le t_2\le t_3\le 1,$ $z_1, z_2, z_3\in\XX,$  we denote 
\begin{eqnarray*}
\Gamma(t_1,z_1;t_2,z_2)&:=&\left\{\omega\in \OO; \omega_{|[t_1,t_2]}=\gamma _{|[t_1,t_2]} \textrm{ for some }\gamma\in \Gamma,\omega _{t_1}=z_1,\omega _{t_2}=z_2\right\}\\
\Gamma(t_1,z_1;t_2,z_2;t_3,z_3)&:=&\Gamma(t_1,z_1;t_3,z_3)\cap \left\{X _{t_2}=z_2\right\} .
\end{eqnarray*}
In particular, we have $\Gamma ^{xy}=\Gamma(0,x;1,y).$ 
We also introduce the functions on $\OO:$
\begin{eqnarray*}
G(t_1,z_1;t_2,z_2)&:=&
\exp \left(\int _{t_1}^{t_2}J _{t,X_t}(\XX)\,dt\right) \1 _{\Gamma(t_1,z_1;t_2,z_2)}\\
G(t_1,z_1;t_2,z_2;t_3,z_3)&:=&
\exp \left(\int _{t_1}^{t_3}J _{t,X_t}(\XX)\,dt\right) \1 _{\Gamma(t_1,z_1;t_2,z_2;t_3,z_3)}.
\end{eqnarray*}
We see that \ 
$	
g^y_t(z)= E_R \left(G(t,z;1,y)\mid X_t=z\right) .
$	
\\
As a direct consequence of the definition of a geodesic, for all $0\le t_1\le t_2\le t_3\le 1,$ $z_1\preceq z_2\preceq z_3\in \Gamma ^{xy}(\ii),$  we have
\begin{equation*}
\Gamma(t_1,z_1;t_2,z_2;t_3,z_3)
=\Gamma(t_1,z_1;t_2,z_2)\cap\Gamma(t_2,z_2;t_3,z_3)
\end{equation*}
which implies that
\begin{equation}\label{eq-36}
G(t_1,z_1;t_2,z_2;t_3,z_3)
=G(t_1,z_1;t_2,z_2)G(t_2,z_2;t_3,z_3)\quad \textrm{on }\Gamma ^{xy}.
\end{equation}
As $\Rt ^{xy}$ is Markov, to derive the infinitesimal generator of its Markov semi-group, it is enough to compute its forward stochastic derivative
\begin{equation*}
\Ltxy_tu(z):=\lim _{h\downarrow0} E _{\Rt ^{xy}}\left[u(X _{t+h})-u(X_t)|X_t=z\right],\quad 0\le t<1, z\in \Gamma ^{xy}(\ii).
\end{equation*}
For any $0\le t<1, z\in \Gamma ^{xy}(\ii)$, with \eqref{eq-35} we see that
\begin{eqnarray*}
\Rt ^{xy}(\cdot\mid X_t=z)
&=& \frac{G(0,x;t,z;1,y)}{E_R \left[G(0,x;t,z;1,y)|X_t=z\right] } R(\cdot\mid X_t=z)\\
&=& \frac{G(0,x;t,z)G(t,z;1,y)}{E_R \left[G(0,x;t,z)|X_t=z\right] g^y_t(z)} \, R(\cdot\mid X_t=z)
\end{eqnarray*}
where last equality follows from \eqref{eq-36} and the Markov property of $R.$
\\
We set $U_t=u(X_t)$ for short. For any finitely supported function $u$ and any $0\le t<t+h\le1,$
\begin{eqnarray*}
 &&E _{\Rt ^{xy}}(U _{t+h}-U_t|X_t=z)\\
&=& \frac{E_R \left[(U _{t+h}-U_t)G(0,x;t,z)G(t,z;1,y)|X_t=z\right]}{E_R[G(0,x;t,z)|X_t=z]g^y_t(z)} \\
&\overset{\eqref{eq-36}}=& \frac{1}{g^y_t(z)}E_R \left[(U _{t+h}-U_t)\1 _{\left\{z\le X _{t+h}\le y\right\} }G(t,z;t+h,X _{t+h})
	G(t+h, X _{t+h};1,y)|X_t=z\right] \\
&=& \frac{1}{g^y_t(z)}E_R \left[(U _{t+h}-U_t)\1 _{\left\{z\le X _{t+h}\le y\right\} } G(t,z;t+h,X _{t+h})
	g^y _{t+h}(X _{t+h})|X_t=z\right] 
\end{eqnarray*}
where the Markov property of $R$ is used at last equality.
\\
When $X_t=z$ and $h$ tends down to 0 we have: $$(U _{t+h}-U_t)\1 _{\left\{z\le X _{t+h}\le y\right\} } = \left\{\begin{array}{ll}
0,& \textrm{if }X _{t+h}=X_t=z \textrm{ with probability } 1-J_{t,z}(\XX)h+o(h)\\
u(w)-u(z),& \textrm{if }X _{t+h}=w\leftarrow z \textrm{ with probability } J_{t,z}(w)h+o(h)\\
*,& \textrm{otherwise with  probability } o(h)
\end{array}\right.$$
where $*$ is something bounded by $2\sup|u|,$
and 
$$
G(t,z;t+h,X _{t+h})= \left\{\begin{array}{ll}
1+O(h),& \textrm{if }X _{t+h}=X_t=z\textrm{ with probability } 1-J_{t,z}(\XX)h+o(h)\\
1+O(h),& \textrm{if }X _{t+h}=w\leftarrow z \textrm{ with probability } J_{t,z}(w)h+o(h)\\
*,& \textrm{otherwise with probability } o(h).
\end{array}\right.
$$
where $*$ is something bounded because of the assumption \eqref{eq-01b}.
Hence,
\begin{equation*}
h ^{-1}E _{\Rt ^{xy}}[U _{t+h}-U_t|X_t=z]=
	\sum _{w:z\to w}[u(w)-u(z)] \frac{g^y_t(w)}{g^y_t(z)}J_{t,z}(w)+o _{h\downarrow0}(1)
\end{equation*}
which shows that $\Ltxy u(z)=\sum _{w:z\to w}[u(w)-u(z)] \frac{g^y_t(w)}{g^y_t(z)}J_{t,z}(w)$ and completes the proof of the theorem.
\hfill$\square$

\subsection*{$\Ph$ is Markov}

It follows from \eqref{eq-22}, Proposition \ref{res-12} and \cite[Prop.\,2.8]{LRZ12} that  the limiting path measure $\Ph$ is reciprocal. 
We can do better, but it requires some effort.

\begin{proposition}\label{res-21}
The limiting path measure $\Ph$ is Markov.
\end{proposition}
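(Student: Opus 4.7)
My plan is to combine the reciprocity of $\Ph$, already established in the discussion preceding the proposition, with the Markov property of each $\Ph^k$ given by Theorem \ref{res-0a}-(1), and to pass to the limit $k\to\infty$ through the narrow convergence $\Ph^k\to\Ph$ from Theorem \ref{res-0a}-(3).

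\emph{Step 1 (reduction).} I would first record the following characterization: a reciprocal $Q\in\PO$ is Markov if and only if, for every $t\in\ii$ and every $z\in\XX$ with $Q_t(z)>0$, the random variables $X_0$ and $X_1$ are $Q$-conditionally independent given $X_t=z$. For the nontrivial implication, given $A\in\sigma(X_{[0,t]})$ and $B\in\sigma(X_{(t,1]})$, I would apply the reciprocal property twice, with $(u,v)=(0,t)$ and $(u,v)=(t,1)$, to obtain
\begin{align*}
& Q(X_{[0,t]}\in A,\,X_{(t,1]}\in B\mid X_0=x,X_t=z,X_1=y) \\
& \qquad = Q(X_{[0,t]}\in A\mid X_0=x,X_t=z)\,Q(X_{(t,1]}\in B\mid X_t=z,X_1=y),
\end{align*}
and then integrate against $Q(X_0\in dx\mid X_t=z)\,Q(X_1\in dy\mid X_t=z)$ (using the assumed conditional independence) to get $Q(X_{[0,t]}\in A\mid X_t=z)\,Q(X_{(t,1]}\in B\mid X_t=z)$, which is the Markov property at time $t$.

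\emph{Step 2 (limit of three-point marginals at generic times).} The Markov property of $\Ph^k$ gives, for every $k\ge 2$, $t\in\ii$ and $(x,z,y)\in\XX^3$,
\begin{equation*}
\Ph^k(X_0=x,X_t=z,X_1=y)\,\Ph^k_t(z)=\Ph^k(X_0=x,X_t=z)\,\Ph^k(X_t=z,X_1=y).
\end{equation*}
Each $\omega\in\OO$ has only finitely many jumps, so Fubini yields $\int_0^1\Ph(X_{t^-}\ne X_t)\,dt=0$; outside a Lebesgue-null set $N\subset\ii$, the evaluation $X_t$ is therefore $\Ph$-a.s.\ continuous on the Skorokhod space $\OO$. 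Combined with the (always valid) continuity of $X_0$ and $X_1$ on $\OO$, this makes the indicator $\1_{\{X_0=x,X_t=z,X_1=y\}}$ $\Ph$-a.s.\ continuous for every $t\in\ii\setminus N$. Portmanteau applied to the narrow convergence $\Ph^k\to\Ph$ then transfers the above identity to $\Ph$ at each $t\in\ii\setminus N$, yielding the conditional independence of $X_0$ and $X_1$ given $X_t$ under $\Ph$ for Lebesgue-a.e.\ $t$.

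\emph{Step 3 (extension to every time).} For $t\in N$, I would choose a sequence $t_n\downarrow t$ with $t_n\in\ii\setminus N$. Right-continuity and the piecewise-constant structure of paths in $\OO$ ensure that $X_{t_n}(\omega)=X_t(\omega)$ for all $n$ large enough, $\Ph$-a.s. Dominated convergence then carries each of the four probabilities in the Step 2 identity from $t_n$ to $t$, and Step 1 produces the full Markov property.

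\emph{Main obstacle.} The delicate point is Step 2: for $0<t<1$ the evaluation map $X_t$ fails to be continuous on the Skorokhod space, so narrow convergence of the three-point marginals $(X_0,X_t,X_1)\pf\Ph^k$ is available only at the $\Ph$-dependent Lebesgue-generic set of times where $\Ph(X_{t^-}=X_t)=1$. Identifying this good set and establishing the joint $\Ph$-a.s.\ continuity of $(X_0,X_t,X_1)$ there is the real work; once this is in hand, the reciprocal reduction of Step 1 and the right-continuity extension of Step 3 amount to routine bookkeeping.
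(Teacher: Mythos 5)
Your proof is correct, but it takes a genuinely different route from the paper's. The paper does not use the reciprocity of $\Ph$ at all in this proof: it first upgrades the narrow convergence $\Ph^k\to\Ph$ to convergence in \emph{total variation} (Lemma \ref{res-20}, which rests on Scheff\'e's theorem, Girsanov's formula for $dR^k/dR$ and the Laplace principle to identify the pointwise limit of the bridge densities), and then invokes Nagasawa's lemma (Lemma \ref{res-18}) that a total-variation limit of Markov probability measures is Markov. You instead take the reciprocity of $\Ph$ (established just before the proposition via \eqref{eq-22}, Proposition \ref{res-12} and the stability of reciprocal classes under mixing) as the structural input, reduce Markovianity to the Jamison-type criterion that $X_0$ and $X_1$ be conditionally independent given $X_t$, and verify this purely finite-dimensional condition by passing to the limit in the three-point marginals under mere narrow convergence — correctly handling the failure of continuity of $X_t$ on the Skorokhod space through the Fubini argument giving $\Ph(X_{t^-}=X_t)=1$ for a.e.\ $t$, and then extending to all $t$ by right-continuity of the piecewise-constant paths. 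Your argument is softer and more self-contained for this particular statement, since it needs neither Girsanov nor any density convergence; the paper's heavier route is not wasted, however, because Lemma \ref{res-20} also delivers the explicit limiting jump kernels $\Jty$ and an entropy-free proof of Theorem \ref{res-11}, which your approach does not provide. One presentational caveat: your Step 1 characterization (reciprocal $+$ endpoint conditional independence $\Rightarrow$ Markov) is stated as a "record" but is not in the paper; if you use this route you should prove it in full (your sketch of the double application of the reciprocal property and the factorization of the mixing measure is the right proof, and it does go through since $\XX$ is countable so all conditionings are on atoms).
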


\begin{proof}
By Theorem \ref{res-0a}-(1), for each $k\ge2,$ 
$\Ph^k$ inherits the Markov property of $R.$ We show below at Lemma \ref{res-20} that, as $k$ tends to infinity, $\Ph^k$ converges in variation norm to $\Ph$ and we conclude with Lemma \ref{res-18} that $\Ph$ is Markov.
\end{proof}

Recall that the total variation norm of the signed bounded measure $q$ on $Y$ is $\|q\|_{\mathrm{TV}}:=|q|(Y)=q^+(Y)+q^-(Y)=\sup _{f:\sup|f|\le 1}\int_Y f\,dq=\sup _{A\subset Y}(|q(A)|+|q(A^c)|)$

\begin{lemma}[Nagasawa, \cite{Naga93}]\label{res-18}
Let $\seq Pk$ be a sequence in $\PO$ of Markov probability measures which converges in variation norm to $P,$ then $P$ is also Markov.
\end{lemma}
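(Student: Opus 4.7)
The strategy is to exploit the countability of $\XX$ to recast the Markov property of a path measure $P\in\PO$ as a countable family of algebraic identities on finite-dimensional atomic probabilities, which are then easily preserved under TV limits.

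First, I recall the standard characterization: since $\XX$ is countable, $P\in\PO$ is Markov if and only if, for every $m\ge1$, every $0\le t_1<\cdots<t_m<t_{m+1}\le1$, and every $(x_1,\ldots,x_{m+1})\in\XX^{m+1}$,
\begin{equation*}
P(X_{t_1}=x_1,\ldots, X_{t_{m+1}}=x_{m+1})\, P(X_{t_m}=x_m) = P(X_{t_1}=x_1,\ldots,X_{t_m}=x_m)\, P(X_{t_m}=x_m, X_{t_{m+1}}=x_{m+1}).
\end{equation*}
When $P(X_{t_1}=x_1,\ldots,X_{t_m}=x_m)>0$ this says exactly $P(X_{t_{m+1}}=x_{m+1}\mid X_{t_1}=x_1,\ldots,X_{t_m}=x_m)=P(X_{t_{m+1}}=x_{m+1}\mid X_{t_m}=x_m)$; the product form above is used only to avoid possibly vanishing denominators.

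Next, I invoke the TV convergence. The bound $|P^k(E)-P(E)|\le\|P^k-P\|_{\mathrm{TV}}\to0$, valid for every measurable $E\subset\OO$, applied to each of the four atoms in the displayed identity, yields pointwise convergence of all four scalar quantities. Since each $P^k$ is Markov, the identity holds with $P$ replaced by $P^k$; taking the limit $k\to\infty$ (each side being a product of two convergent sequences of real numbers), the identity holds for $P$ as well. Since the times $t_i$ and states $x_i$ are arbitrary, $P$ is Markov.

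The essentially only subtle point is to choose a formulation of the Markov property that is simultaneously equivalent to the standard definition and stable under pointwise limits of atomic probabilities. The multiplicative form above does both jobs cleanly thanks to the countability of $\XX$. Finally, the full strength of TV convergence is essential here: narrow convergence would only control integrals of continuous bounded functions, but for intermediate times $t\in(0,1)$ the indicator $\1_{\left\{X_t=y\right\}}$ is discontinuous on $\OO$ in the Skorokhod topology, as already noted in the excerpt, so narrow convergence alone would not yield the required pointwise convergence of atomic probabilities.
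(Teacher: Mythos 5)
Your proof is correct, but it follows a genuinely different route from the paper's. You reduce the Markov property to a countable family of finite-dimensional multiplicative identities on atoms of the form
\begin{equation*}
P(X_{t_1}=x_1,\ldots,X_{t_{m+1}}=x_{m+1})\,P(X_{t_m}=x_m)=P(X_{t_1}=x_1,\ldots,X_{t_m}=x_m)\,P(X_{t_m}=x_m,X_{t_{m+1}}=x_{m+1}),
\end{equation*}
and then pass to the limit using the setwise convergence $|P^k(E)-P(E)|\le\|P^k-P\|_{\mathrm{TV}}$. The paper instead works directly with conditional expectations: it first shows $E_{P^k}(c\mid X_t)\to E_P(c\mid X_t)$ in $L^1(P)$ for bounded $c\in\sigma(X_{[t,1]})$, via the bound $\big|E_P[bE_P(c|X_t)]-E_P[bE_{P^k}(c|X_t)]\big|\le 2\|b\|\|c\|\,\|P^k-P\|_{\mathrm{TV}}$, and then closes the argument with a three-term triangle inequality for $\big|E_P(ac)-E_P[aE_P(c|X_t)]\big|$. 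Your argument exploits the countability of $\XX$ (the atoms) and leans on the classical but not entirely trivial equivalence between the single-future-time finite-dimensional Markov property and the full conditional independence of $\sigma(X_{[0,t]})$ and $\sigma(X_{[t,1]})$ given $X_t$ (an induction over future times plus a monotone class argument); you should at least signal that this equivalence is what is being invoked. In exchange, your limit passage is completely elementary (products of convergent real sequences), whereas the paper's conditional-expectation argument is state-space-agnostic and would survive verbatim on an uncountable Polish state space. Your closing observation on why total variation, and not merely narrow, convergence is needed is exactly the point the paper makes in the remark following the lemma.
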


\begin{proof}
We have to show  that for all $0\le t\le1$ and any bounded functions $a\in \sigma(X _{[0,t]})$ and $c\in \sigma(X _{[t,1]}),$ we have $E_P(ac)=E_P(a E_P(c|X_t)).$ 
\\
For any bounded functions $b\in\sigma(X_t)$ and  $c\in \sigma(X _{[t,1]})$, we have 
\begin{eqnarray*}
&&\big|E_P[bE_P(c|X_t)]-E_P[bE _{P^k}(c|X_t)]\big|\\
	&\le&
\big|E_P(bc)-E _{P^k}(bc)\big|+\big|E _{P^k}[b E _{P^k}(c|X_t)]-E_P[bE _{P^k}(c|X_t)]\big|\\
&\le&
2\|b\|\|c\| \|P^k-P\|_{\mathrm{TV}} \underset{k\to \infty}\rightarrow 0
\end{eqnarray*}
Hence,
\begin{equation}\label{eq-38}
\Lim k E_{P^k}(c|X_t)=E_P(c|X_t)\quad \textrm{ in } L^1(P).
\end{equation}
Let $a\in \sigma(X _{[0,t]}),$ 
\begin{eqnarray*}
&&
\big|E_P(ac)-E_P[aE_P(c|X_t)]\big|\\
&\le& \big|E_P(ac)-E _{P^k}(ac)\big|+\big|E _{P^k}[aE _{P^k}(c|X_t)]-E_P[aE _{P^k}(c|X_t)]\big|\\
&&\qquad+\big|E_P[aE _{P^k}(c|X_t)]-E_P[aE_P(c|X_t)]\big|
\end{eqnarray*}
But, $$\big|E_P(ac)-E _{P^k}(ac)\big|+\big|E _{P^k}[aE _{P^k}(c|X_t)]-E_P[aE _{P^k}(c|X_t)]\big|
\le 2\|a\|\|c\| \|P^k-P\|_{\mathrm{TV}} \underset{k\to \infty}\rightarrow 0$$
and  $\big|E_P[aE _{P^k}(c|X_t)]-E_P[aE_P(c|X_t)]\big|\underset{k\to \infty}\rightarrow 0$ because of \eqref{eq-38}. This completes the proof of the lemma.
\end{proof}

There are counter-examples of sequences $\seq Pk$ of Markov measures converging narrowly to a  non-Markov $P$.
\\
The following standard lemma (Scheff\'e's theorem) is a preliminary result for Lemma \ref{res-20}'s proof.

\begin{lemma}\label{res-19}
Let $r$ be a positive measure and $p_k=z_k\, r,$ $k\ge1$,  $p=z\,r$ be probability measures which are absolutely continuous with respect to $r.$ If $\Lim k z_k=z,$ $r\ae$, then 
\begin{equation*}
\|z\,r-z_k\,r\|_{\mathrm{TV}}=\int |z-z_k|\,dr \underset{k\to \infty}\rightarrow 0.
\end{equation*}
\end{lemma}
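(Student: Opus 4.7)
The plan is to prove this by the classical Scheffé argument, exploiting the sign cancellation forced by the fact that both $z\,r$ and $z_k\,r$ are probability measures.

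First I would observe the equality $\|z\,r - z_k\,r\|_{\mathrm{TV}} = \int |z-z_k|\,dr$; this is standard and follows from the Jordan decomposition of $(z-z_k)\,r$ together with the fact that both measures are absolutely continuous with respect to $r$. So the problem reduces to showing $\int |z-z_k|\,dr \to 0$.

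The key trick is to write $|z-z_k| = (z-z_k)^+ + (z-z_k)^-$ and use that both probabilities integrate to $1$. Indeed, since $\int z\,dr = \int z_k\,dr = 1$, we have $\int (z-z_k)\,dr = 0$, which gives $\int (z-z_k)^+\,dr = \int (z-z_k)^-\,dr$, hence
\begin{equation*}
\int |z-z_k|\,dr = 2\int (z-z_k)^+\,dr.
\end{equation*}
Now I would apply dominated convergence to the sequence $(z-z_k)^+$: the pointwise convergence $z_k\to z$ $r$-a.e.\ gives $(z-z_k)^+\to 0$ $r$-a.e., and the uniform bound $(z-z_k)^+ \le z$ provides an $r$-integrable majorant since $\int z\,dr = 1 <\infty$. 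Therefore $\int (z-z_k)^+\,dr\to 0$, and the conclusion follows.

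There is no real obstacle; this is a textbook argument and all hypotheses ($r$-a.e.\ convergence, both densities normalized so that their difference has zero integral) are explicitly in the statement. The only subtlety worth mentioning is that $r$ may be unbounded, which is precisely why one cannot use a trivial constant dominating function and must instead rely on the pointwise bound $(z-z_k)^+ \le z$.
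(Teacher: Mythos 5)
Your proof is correct and follows essentially the same route as the paper: both reduce to $\int|z-z_k|\,dr=2\int(z-z_k)^+\,dr$ via $\int(z-z_k)\,dr=0$ and then apply dominated convergence with the majorant $z$. No issues.
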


\begin{proof}
Set $d_k=z-z_k.$ Then, $\int d_k\,dr=0.$ For any measurable subset $A$, 
\begin{equation*}
\Big|\int_A d_k\,dr\Big|
+\Big|\int _{A^c} d_k\,dr\Big|\le \int |d_k|\,dr
\end{equation*}
with equality when $A=\left\{d_k\ge0\right\}.$ Therefore, $\|z\,r-z_k\,r\|_{\mathrm{TV}}=\int|d_k|\,dr$. As $\Lim k d_k^+=0,$ $r\ae$ and $0\le d_k^+\le z,$ we obtain $\Lim k\int|d_k|\,dr=2\Lim k\int d_k^+\,dr=0.$
\end{proof}

The following standard lemma will also be used during the proof of Lemma \ref{res-20}.

\begin{lemma}[Laplace principle]\label{res-02}
Let $r$ be a positive measure on the measurable set $Y$. For any measurable function $F:Y\to[-\infty,\infty]$ which is not identically equal to $-\infty$ and any measurable subset $Y'\subset Y$ such that $r(Y')<\infty,$ we have
\begin{equation*}
\lime \epsilon\log \int_{Y'}e ^{F/\epsilon}\,dr=r\esssup _{Y'}F\in (-\infty,\infty].
\end{equation*} 
\end{lemma}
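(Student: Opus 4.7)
The plan is to establish matching upper and lower bounds on $\epsilon \log I(\epsilon)$ as $\epsilon \downarrow 0$, where $I(\epsilon) := \int_{Y'} e^{F/\epsilon}\,dr$ and $M := r\esssup_{Y'} F \in [-\infty,\infty]$, both bounds being equal to $M$. The role of the finiteness assumption $r(Y')<\infty$ is precisely to kill two correction terms of the form $\epsilon \log r(\cdot)$ that appear naturally in the two one-sided estimates.

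For the upper bound, when $M<\infty$ I would use the defining property of the essential supremum, namely $F \leq M$ $r$-a.e.\ on $Y'$, to bound
$$I(\epsilon) \;\leq\; e^{M/\epsilon}\, r(Y'),$$
and then take $\epsilon \log$ to obtain $\epsilon \log I(\epsilon) \leq M + \epsilon \log r(Y')$, whose limit is $M$ since $0<r(Y')<\infty$. When $M=+\infty$ there is nothing to prove.

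For the lower bound I would fix an arbitrary real $c < M$. By the very definition of $M$ as the essential supremum, the set $A_c := \{F > c\} \cap Y'$ has $r(A_c)>0$, and $r(A_c)\leq r(Y')<\infty$. Restricting the integration domain to $A_c$ and bounding $F$ from below by $c$ there,
$$I(\epsilon) \;\geq\; \int_{A_c} e^{F/\epsilon}\,dr \;\geq\; e^{c/\epsilon}\, r(A_c),$$
hence $\epsilon \log I(\epsilon) \geq c + \epsilon \log r(A_c) \to c$ as $\epsilon \downarrow 0$. Letting $c \uparrow M$ yields $\liminf_{\epsilon\downarrow 0}\epsilon \log I(\epsilon) \geq M$, which together with the upper bound gives the claim.

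There is no real obstacle: everything reduces to the definition of $r\esssup_{Y'}F$ combined with the finiteness of $r(Y')$. Dropping the latter would allow $\epsilon \log r(Y')$ to diverge in the upper bound and spoil the limit, so this hypothesis is sharp. The assumption that $F$ is not identically $-\infty$ serves only to exclude the degenerate case where $F=-\infty$ $r$-a.e.\ on $Y'$, in which both sides would be $-\infty$ under the convention $\epsilon \log 0 = -\infty$; this is precisely why the limit is asserted to lie in $(-\infty,\infty]$.
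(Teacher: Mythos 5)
Your proof is correct and follows essentially the same route as the paper: the upper bound from $F\le M$ $r$-a.e.\ combined with $r(Y')<\infty$, and the lower bound by restricting the integral to a superlevel set of positive measure and letting the level tend to the essential supremum. The only cosmetic difference is that the paper uses the level $b_\delta:=\min[b-\delta,1/\delta]$ to treat the cases $b<\infty$ and $b=\infty$ uniformly, whereas you take an arbitrary real $c<M$; the two devices are equivalent.
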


\begin{proof}
Considering the restriction of $r$ to $Y',$
one can assume without loss of generality that $Y'=Y$ and $r(Y)<\infty.$ 
To simplify notation, let us denote $b:=r\esssup F\in (-\infty,\infty]$ and for any $\delta>0,$ $b_\delta:=\min[b-\delta,1/\delta]$.
Since $F\le b,$ $r\ae,$  we have $\epsilon\log\int e ^{F/\epsilon}\, dr\le b+\epsilon\log r(Y)$ and a fortiori $$\limsup _{\epsilon\to0} \epsilon\log\int e ^{F/\epsilon}\,dr\le b.$$
On the other hand, for any $\epsilon,\delta>0,$ 
$$
\epsilon\log\int e ^{F/\epsilon}\,dr\ge \epsilon\log\int \1 _{\left\{F\ge b_\delta\right\}}e ^{F/\epsilon}\,dr\ge \epsilon\log r(F\ge b_\delta)+b_\delta.
$$ 
By the definition of $r\esssup F,$ for any $\delta>0$ we have $0<r(F\ge b_\delta)\le r(Y)<\infty.$ Therefore,  taking the $\liminf _{\epsilon\to0}$ and then letting $\delta$ tend down to zero, we obtain  
$$
\liminf _{\epsilon\to0} \epsilon\log\int e ^{F/a_\epsilon}\,dr\ge b,
$$
which completes the proof of the lemma.
\end{proof}

\begin{lemma}\label{res-20}
We have:\ $\Lim k \|\Ph^k-\Ph\|_{TV}=0.$
\end{lemma}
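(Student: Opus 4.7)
The plan is to apply Scheff\'e's lemma (Lemma \ref{res-19}) with the common reference measure $R$: it suffices to prove $R$-almost sure convergence of the densities $d\Ph^k/dR \to d\Ph/dR$. From Theorem \ref{res-0a}-(1) and the disintegration \eqref{eq-11}, $\Ph^k = h^k(X_0,X_1)\,R^k$ with $h^k := d\ph^k/dR^k_{01}$; combined with Girsanov's formula \eqref{eq-14b} this yields
\begin{equation*}
\frac{d\Ph^k}{dR}(\omega) = h^k(X_0,X_1)\,k^{-\ell(\omega)}\,e^{U^k(\omega)}.
\end{equation*}
Similarly, \eqref{eq-22} and the definition \eqref{eq-35} of $\Rt$ give $d\Ph/dR(\omega) = h(X_0,X_1)\,\1_\Gamma(\omega)\,\exp(\int_0^1 J_{t,X_t}(\XX)\,dt)$ with $h := d\ph/d\Rt_{01}$.

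The next step is to factorize
\begin{equation*}
\frac{d\Ph^k}{dR}(\omega) = \bigl[k^{-d(X_0,X_1)} h^k(X_0,X_1)\bigr]\cdot k^{d(X_0,X_1)-\ell(\omega)}\cdot e^{U^k(\omega)}
\end{equation*}
and to analyze the three factors. By \eqref{eq-01b} the last factor converges to $\exp(\int_0^1 J_{t,X_t}(\XX)\,dt)$ via dominated convergence. Since $\ell(\omega) \geq d(X_0,X_1)$ with equality exactly on $\Gamma$ and both $\ell$ and $d$ take values in a discrete set, the middle factor tends pointwise to $\1_\Gamma(\omega)$.

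For the first factor, write $h^k(x,y) = \ph^k(x,y)/R^k_{01}(x,y)$, viewing atoms of the countable space $\XXX$. The Girsanov representation gives $R^k_{01}(x,y) = E_R[k^{-\ell}e^{U^k};\,X_0=x,X_1=y]$, so
\begin{equation*}
k^{d(x,y)}R^k_{01}(x,y) = E_R[k^{d(x,y)-\ell}e^{U^k};\,X_0=x,X_1=y] \xrightarrow[k\to\infty]{} \Rt_{01}(x,y)
\end{equation*}
by dominated convergence (the integrand converges pointwise to $\1_{\Gamma^{xy}}e^{\int J}$ and is bounded thanks to \eqref{eq-01b} and $\ell \geq d(x,y)$ on the conditioning event). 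Because $\XXX$ carries the discrete topology, every singleton is clopen and the narrow convergence $\ph^k \to \ph$ from Theorem \ref{res-0a}-(2) forces pointwise convergence on atoms: $\ph^k(x,y) \to \ph(x,y)$. Dividing, $k^{-d(x,y)}h^k(x,y) \to \ph(x,y)/\Rt_{01}(x,y) = h(x,y)$; the denominator is positive because irreducibility together with Hypothesis \ref{hyp-00}-($R$) ensures $R(\Gamma^{xy}\mid X_0=x,X_1=y) > 0$.

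Combining the three pointwise limits gives $d\Ph^k/dR \to d\Ph/dR$ $R$-almost surely, and Lemma \ref{res-19} yields $\|\Ph^k - \Ph\|_{\mathrm{TV}} \to 0$. The main obstacle lies in controlling the first factor: it must balance the exponentially shrinking bridge mass $R^k_{01}(x,y) \sim k^{-d(x,y)}\Rt_{01}(x,y)$ against the exponentially blowing up density $h^k$, and the two asymptotics are tied together precisely through the atomwise convergence $\ph^k\to\ph$, which itself hinges on the discreteness of the state space. The other ingredients are routine applications of dominated convergence on integrands uniformly bounded by \eqref{eq-01b}.
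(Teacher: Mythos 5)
Your proof is correct, and it takes a genuinely different route for the core step than the paper does. Both arguments begin identically: reduce total-variation convergence to $R$-a.e.\ convergence of the densities $d\Ph^k/dR$ via Scheff\'e (Lemma \ref{res-19}), and both isolate the endpoint factor, whose convergence comes from the atomwise convergence $\ph^k\to\ph$ guaranteed by Theorem \ref{res-0a} and the discreteness of $\XXX$. The divergence is in how the bridge part is handled. The paper reduces to showing $\Lim k\, d\Rt^{xy}/dR^{k,xy}=1$, $R^{xy}$-a.e., and proves this by computing the jump kernel $J^{k,xy}$ of the bridge $R^{k,xy}$ via Girsanov, then passing to the limit with the Laplace principle (Lemma \ref{res-02}) to recover the kernel $\Jty$ of Theorem \ref{res-0b}; as a by-product this gives a second, dynamics-based proof of $\Lim k R^{k,xy}=\Rt^{xy}$ (see the Remark following the lemma). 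You instead stay with unconditional densities: the three-factor decomposition $[k^{-d(X_0,X_1)}h^k]\cdot k^{d(X_0,X_1)-\ell}\cdot e^{U^k}$, the elementary pointwise limits $k^{d-\ell}\to\1_\Gamma$ and $U^k\to\Iii J_{t,X_t}(\XX)\,dt$, and the normalization asymptotic $k^{d(x,y)}R^k_{01}(x,y)\to\Rt_{01}(x,y)$ by dominated convergence on the finite measure $R(\cdot\,;X_0=x,X_1=y)$. Your route is more self-contained (no appeal to Theorem \ref{res-0b} or the Laplace principle, only to \eqref{eq-14b}, \eqref{eq-22}, \eqref{eq-35} and Theorem \ref{res-0a}), at the price of not yielding the jump-kernel convergence as a by-product. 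The one point you use implicitly and should perhaps flag is that $\Rt_{01}(x,y)>0$ requires $\Gamma^{xy}\neq\emptyset$ together with positive $R$-mass on it; this follows because $d(z,w)\ge1$ on edges and local finiteness force the infimum defining the intrinsic distance to be attained over finitely many chains, each of which has positive probability under $R(\cdot\mid X_0=x)$ by Hypothesis \ref{hyp-00}-($R$).
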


\begin{proof}
By Lemma \ref{res-19}, it is enough to prove that $\Lim k d\Ph^k/dR=d\Ph/dR,$ $R\ae$
For any $P\in\PO$ such that $P\ll R,$ we have $\frac{dP}{dR}=\sum _{x,y\in\XX} \1 _{\{X_0=x,X_1=y\}}
\frac{P _{01}(x,y)}{R _{01}(x,y)}\ \frac{dP ^{xy}}{dR ^{xy}}.
$
We also have $\Lim k \Ph _{01}^k(x,y)=\Ph _{01}(x,y)$ and $\Lim k \Ph ^{k,xy}=\Ph ^{xy}$  for all $x,y\in\XX.$ Therefore, it remains to show that for each $(x,y)\in\XX,$ 
	 $\Lim k d\Ph ^{xy}/d\Ph ^{k,xy}=1,$ $R ^{xy}\ae$
As, $\Ph ^{xy}=\Rt ^{xy}$ and $\Ph ^{k,xy}=R ^{k,xy}$, this amounts to prove that
\begin{equation}\label{eq-39}
\Lim k \frac{d\Rt ^{xy}}{dR ^{k,xy}}=1,\quad R ^{xy}\ae
\end{equation}
for all $x,y.$
By Girsanov's formula \eqref{eq-14b},  for any $0\le t\le 1,$  $$R^k _{[t,1]}(\cdot|X_t)=k ^{-\ell _{[t,1]}}\exp \Big(\int _{[t,1]\times \XX} (1-k ^{-d(X_s,y)})\,J _{s,X_s}(dy)\,ds\Big)\,R _{[t,1]}(\cdot|X_t)$$ 
where $\ell _{[t,1]}:=\sum _{t\le s \le1}d(X _{s^-},X_s).$ Hence,  the jump measure of $R ^{k,xy}$ is given  for any $t\in\ii,z,w\in\XX$ by
\begin{eqnarray*}
&&J ^{k,xy}_{t,z}(w)\\
&=&
\frac{R^k(X_1=y|X_t=w)}{R^k(X_1=y|X_t=z)}\,k ^{-d(z,w)}\,J_{t,z}(w)\\
&=& \frac{E_R \left[k ^{-\{d(z,w)+\ell _{[t,1]}\}}\exp \left(\int _{[t,1]\times\XX}(1-k ^{-d(X_s,a)})\,J _{s,X_s}(da)ds\right) \1_{(X_1=y)}\mid X_t=w\right]}{E_R \left[k ^{-\ell _{[t,1]}}\exp \left(\int _{[t,1]\times\XX}(1-k ^{-d(X_s,a)})\,J _{s,X_s}(da)ds\right) \1_{(X_1=y)}\mid X_t=z\right]} J_{t,z}(w).
\end{eqnarray*}
With Lemma \ref{res-02}, we obtain that 
\begin{equation*}
\Lim k J ^{k,xy}_{t,z}(w)=
	\frac{E_R \left[\exp \left(\int _t^1J _{s,X_s}(\XX)\,ds\right) \1 _{\Gamma(t,w;1,y)}| X_t=w\right]}{E_R \left[\exp \left(\int _t^1J _{s,X_s}(\XX)\,ds\right) \1 _{\Gamma(t,z;1,y)}| X_t=z\right]}\, J_{t,z}(w)
	=:\Jty_{t,z}(w)
\end{equation*}
meaning, with Theorem \ref{res-0b}, that the pointwise limit of $J ^{k,xy}$ as $k$ tends to infinity is the jump measure $\Jty$ of $\Rt ^{xy}.$ With Girsanov's formula, this implies  that \eqref{eq-39} is true, completing the proof of the lemma.
\end{proof}

\begin{remark}
Theorem \ref{res-0b} and Lemma \ref{res-20}  together, give an alternate proof of  the convergence $\Lim k R ^{k,xy}=\Rt ^{xy}$ stated in Theorem \ref{res-11}, with no reference to an entropy minimization problem.
\end{remark}

\subsubsection*{A comparison with the usual continuous setting}

For comparison, it is worthwhile recalling an analogous result in the usual setting of McCann's displacement interpolations on $\XX=\RR^n$. The Monge-Mather shortening principle, see \cite[Ch.\,8]{Vill09}, tells us that the optimal plan $\ph$ for the quadratic cost between $\mu_0$ and $\mu_1$ is such that two distinct geodesics interpolating between  couples of endpoints in its support  $\supp\ph,$ do not intersect. 
\\
In our graph setting, this would correspond to the nice situation where for any $z$ and $t<1$, $\Ph(X_1\in\cdot|X_t=z)$ reduces to a Dirac measure. But in our discrete setting,  $\Ph(\cdot|X_t=z)$ has sample paths  which live on a directed geodesic tree with top leaves (at time $1$) distributed according to a probability measure $\Ph(X_1\in\cdot|X_t=z)$  which might not reduce to a Dirac mass in the general case.
\\
 This directed tree structure  is a consequence of  the \emph{Markov property} of $\Ph$.
 It can also be seen as a consequence of  the \emph{shortening principle} which, in the present metric cost setting, is an elementary consequence of the triangle inequality.

As a corollary of Proposition \ref{res-21}, we obtain  the following result.

\begin{proposition}
For all $0\le s\le t\le1,$ $\Ph _{st}\in\PXX$ is an optimal coupling of $\mu_s$ and $\mu_t,$ meaning that $\Ph _{st}$ is a solution of \eqref{MK} with $\mu_s$ and $\mu_t$ as  prescribed marginal constraints.
\end{proposition}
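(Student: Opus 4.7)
The plan is to argue by contradiction, using the Markov property of $\Ph$ (Theorem \ref{res-0c}) to perform surgery on path measures, together with the fact that $\Ph$ minimizes \eqref{MKdyn} (Theorem \ref{res-0a}-(3)). Note that $\Ph_{st}$ automatically has marginals $\mu_s=\Ph_s$ and $\mu_t=\Ph_t$, so only optimality requires work.

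First I would split the length as $\ell=\ell_{[0,s]}+\ell_{[s,t]}+\ell_{[t,1]}$ with $\ell_{[u,v]}:=\sum_{u<r<v}d(X_{r^-},X_r)$. Since $\Ph$ solves \eqref{MKdyn}, Lemma \ref{res-07} gives $\Ph(\Gamma)=1$; in particular, for $\Ph_{st}$-a.e.\ $(z,w)$, the conditional measure $\Ph(X_{[s,t]}\in\cdot\mid X_s=z,X_t=w)$ is concentrated on geodesics from $z$ to $w$ on $[s,t]$ (because the restriction of a geodesic is a geodesic). Hence
\begin{equation*}
E_\Ph(\ell_{[s,t]})=\IXX d(z,w)\,\Ph_{st}(dzdw).
\end{equation*}

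Next, suppose for contradiction that there exists $\pi^*\in\PXX$ with $\pi^*_0=\mu_s$, $\pi^*_1=\mu_t$ and
\begin{equation*}
\IXX d(z,w)\,\pi^*(dzdw)<\IXX d(z,w)\,\Ph_{st}(dzdw).
\end{equation*}
Pick any measurable geodesic kernel $(Q^{zw}\in\mathrm{P}(\Gamma^{zw}_{[s,t]}))_{z,w\in\XX}$ on the time-interval $[s,t]$ (this exists because $\XX$ is countable and, by Hypothesis \ref{hyp-00}-$(d)$, $\Gamma^{zw}_{[s,t]}$ is non-empty for every $z,w$). Using the Markov property of $\Ph$, define $P'\in\PO$ by gluing at times $s$ and $t$:
\begin{equation*}
P'(\cdot):=\IXX \bigl[\Ph_{[0,s]}(\cdot\mid X_s=z)\otimes Q^{zw}\otimes \Ph_{[t,1]}(\cdot\mid X_t=w)\bigr]\,\pi^*(dzdw),
\end{equation*}
where the compatibility of the gluing uses $\pi^*_0=\Ph_s$ and $\pi^*_1=\Ph_t$. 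Then $P'_0=\mu_0$, $P'_1=\mu_1$, and
\begin{equation*}
E_{P'}(\ell)=E_\Ph(\ell_{[0,s]})+\IXX d(z,w)\,\pi^*(dzdw)+E_\Ph(\ell_{[t,1]})<E_\Ph(\ell),
\end{equation*}
contradicting the optimality of $\Ph$ for \eqref{MKdyn}. Therefore $\Ph_{st}$ attains the infimum in \eqref{MK} between $\mu_s$ and $\mu_t$.

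The one slightly delicate point is the measurable selection of a geodesic kernel $(z,w)\mapsto Q^{zw}$ on $[s,t]$ needed to define $P'$; this is routine since the endpoints range over the countable product $\XXX$ and each $\Gamma^{zw}_{[s,t]}$ is a non-empty measurable set (Lemma \ref{res-meas} adapted to $[s,t]$), so one may, for instance, fix a single geodesic per pair or, more naturally, use the renormalization of $\Rt$ on $[s,t]$. Apart from that, every ingredient — the Markov property, the concentration of $\Ph$ on $\Gamma$, and the length decomposition — has already been established, so no further technical hurdle remains.
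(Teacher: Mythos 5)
Your proof is correct and follows exactly the route the paper sketches for this proposition: an ad absurdum argument using the decomposition of $\ell$ over $[0,s]$, $[s,t]$, $[t,1]$, surgery by gluing at times $s$ and $t$, and the fact that geodesics can be reparametrized to live on $[s,t]$ (the paper's ``insensitivity of $\ell$ to changes of time''), with the concentration of $\Ph$ on $\Gamma$ giving $E_{\Ph}(\ell_{[s,t]})=\IXX d\,d\Ph_{st}$. You have simply filled in the details that the paper leaves as ``a standard ad absurdum reasoning,'' and your handling of the measurable selection of the geodesic kernel (trivial since $\XXX$ is countable) is adequate.
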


\begin{proof}
It is a consequence of 
\begin{enumerate}
\item
the Markov property of $\Ph,$ see Proposition \ref{res-21}, which allows  surgery by gluing the bridges of $\Ph _{[s,t]}$ together with the restrictions $\Ph _{[0,s]}$ and $\Ph _{[t,1]}$;
\item
the fact that $\ell _{st}:=\sum _{s<r<t}d(X _{r^-},X_r)$ is insensitive to  changes of time: i.e.\ for any strictly increasing mapping $\theta:[s,t]\to\ii$ with $\theta(s)=0,$ $\theta(t)=1,$ we have $\ell _{st}=\ell _{01}(X\circ\theta).$
\end{enumerate}
A standard ad absurdum reasoning leads to the announced property.
\end{proof}

\subsection*{Proof of Theorem \ref{res-0c}-(2)}

Now, let us investigate the dynamics of $\Ph$ in the general case where it interpolates between marginal constraints $\mu_0$ and $\mu_1$ which are not necessarily Dirac measures. 
Let us denote $$\widehat p(t,z;dy):=\Ph(X_1\in dy|X_t=z)$$ so that the optimal coupling $\Ph _{t1}$ of $\mu_t$ and $\mu_1$   disintegrates as $\Ph _{t1}(dzdy)=\mu_t(dz)\widehat p(t,z;dy).$
\\
As in Theorem \ref{res-0b}, we compute a stochastic derivative. A general disintegration result tells us that
\begin{eqnarray*}
\Ph _{[t,1]}(\cdot|X_t=z)
	&=&\IX \Ph _{[t,1]}(\cdot| X_t=z, X_1=y)\, \Ph(X_1\in dy|X_t=z)\\
	&=&\IX \Rt _{[t,1]} ^{x_o,y}(\cdot|  X_t=z, )\, \widehat p(t,z;dy)
\end{eqnarray*}
since, for $\mu_0$-almost any $x_o,$
\begin{eqnarray*}
\Ph _{[t,1]}(\cdot| X_t=z, X_1=y)
	&=& \Ph _{[t,1]}(\cdot| X_0=x_o,X_t=z, X_1=y)\\
	&=& \Rt _{[t,1]} ^{x_o,y}(\cdot|  X_t=z)
\end{eqnarray*}
where we used the Markov property of $\Ph$  at the first  equality and the identity $\Ph ^{x_o,y}=\Rt ^{x_o,y}$ at the second equality. Therefore, for all $0<t<t+h<1,$ 
\begin{equation*}
E _{\Ph}\Big([u(X _{t+h})-u(X_t)]/h|X_t=z\Big)
	=\IX E _{\Rt ^{x_o,y}}\Big([u(X _{t+h})-u(X_t)]/h|X_t=z\Big)\, \widehat p(t,z;dy)
\end{equation*}
and letting $h$ tend down to zero, we see that the stochastic derivative of $\Ph$ is given by
\begin{equation*}
\widehat L _{t}u(z)=\IX \Lty _{t}u(z)\, \widehat p(t,z;dy)
\end{equation*}
which gives the desired expression for the jump kernel $\Jh.$
\\
The evolution equation for $[\mu_0,\mu_1]$ is the usual forward Fokker-Planck equation for the time-marginal flow $t\mapsto \Ph_t$ of the Markov measure $\Ph.$
\hfill$\square$

\section{Conservation of the  average rate of mass displacement}\label{sec-conservation}

The main result of this section is Theorem \ref{res-0fb}. It states that the constant average rate of mass displacement, recall Definition \ref{def-05}, is conserved along the  displacement interpolation. It is a consequence of the Corollary \ref{res-39} of Proposition \ref{res-41} which also asserts that some similar quantity  is conserved along the time marginal flow of the solution of the dynamical Schr\"odinger problem \eqref{Sdyn}.

\subsection*{Main results of the section}

The main technical result of this section is the following Proposition \ref{res-41} whose proof is postponed to the next subsection.

\begin{proposition}\label{res-41}
Let $P\in\PO$ be a random walk. We denote $(\nu_t)_{t\in\ii}$ its marginal flow and $(\nu\JJ_t(\XXX))_{0\le t\le1}$ its   average rate of mass displacement. Let us assume that 
\begin{enumerate}[(i)]
\item
$H(P|R)<\infty;$
\item
$\nu\JJ_t(\XXX)>0$ for all $t\in\ii$;
\item
$1<\Iii \nu\JJ_t(\XXX)/\nu J_t(\XXX)\,dt\le \infty.$ 
\end{enumerate}
Then, there exists a change of time $ \widehat\tau$ which minimizes the function $\tau\mapsto H(P^\tau|R)$ among all the changes of time $\tau$ and verifies
\begin{equation}\label{eq-70}
\nu \JJ ^{\widehat \tau}_t(\XXX)-\nu J _{\widehat \tau(t)}(\XXX)=K, \quad \textrm{for almost every }t\in\ii,
\end{equation}
for some constant $K>0.$ We have denoted by $\nu\JJ^{\widehat \tau}_t(\XXX)=\dot{\widehat \tau}(t)\nu\JJ _{\widehat \tau(t)}(\XXX)$ the    average rate of mass displacement of $P ^{\widehat \tau}.$
\end{proposition}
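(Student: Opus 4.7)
The plan is to apply Girsanov's formula to rewrite $H(P^\tau|R)$ as an explicit convex functional of the time change, and then identify its minimizer via Lagrange multipliers; the optimality condition will precisely be the conservation identity \eqref{eq-70}.

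Since $X\circ\tau$ is injective one has $H(P^\tau|R^\tau)=H(P|R)$, so that $H(P^\tau|R)=H(P|R)+E_{P^\tau}\log(dR^\tau/dR)$. Applying Girsanov to compute $dR^\tau/dR$ (the density is well defined because $J_{\tau(t),z}$ and $J_{t,z}$ share the support $\{y:y\sim z\}$ by Hypothesis~\ref{hyp-00}-($R$)), and then making the change of variable $s=\tau(t)$ with $v(s):=\dot\tau(\tau^{-1}(s))$, a direct computation collapses the cross terms to
\[
H(P^\tau|R)=H(P|R)+\Iii\bigl[\phi(s)\log v(s)+\psi(s)/v(s)\bigr]\,ds-\Iii\psi(s)\,ds,
\]
where $\phi(s):=\nu\JJ_s(\XXX)$, $\psi(s):=\nu J_s(\XXX)$, and the admissible $v>0$ are exactly those satisfying $\Iii v(s)^{-1}\,ds=1$ (which encodes $\tau(1)=1$).

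Setting $u:=1/v$ turns the minimization over $\tau$ into that of the strictly convex functional $u\mapsto\Iii[-\phi\log u+\psi u]\,ds$ under the linear constraint $\Iii u\,ds=1$. The Euler--Lagrange equation $-\phi(s)/u(s)+\psi(s)+\lambda=0$ gives $u(s)=\phi(s)/(\psi(s)+\lambda)$, equivalently $v(s)\phi(s)-\psi(s)=\lambda$ almost everywhere, which in the original time variable becomes exactly $\nu\JJ^{\widehat\tau}_t(\XXX)-\nu J_{\widehat\tau(t)}(\XXX)=\lambda=:K$.

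It remains to determine $\lambda$ from the constraint $\Phi(\lambda):=\Iii\phi(s)/(\psi(s)+\lambda)\,ds=1$. On $[0,\infty)$ the function $\Phi$ is continuous and strictly decreasing, equals $\Iii\phi/\psi\,ds>1$ at $\lambda=0$ by hypothesis (iii), and tends to $0$ as $\lambda\to\infty$ by dominated convergence (the integrand is dominated by $\phi/\lambda$, and $\Iii\phi\,ds=E_P(N)$ is finite as a consequence of $H(P|R)<\infty$ together with the boundedness \eqref{eq-01b} of the jump rates of $R$). Hence there is a unique $\lambda>0$ achieving $\Phi(\lambda)=1$, so $K>0$; the corresponding $v=(\psi+\lambda)/\phi$ is strictly positive by hypothesis (ii), and $\widehat\tau$ is recovered as the inverse of $\sigma(s):=\int_0^s v(u)^{-1}\,du$. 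The main technical obstacle I expect is then to verify that $\widehat\tau$ is absolutely continuous rather than merely continuous and strictly increasing, which amounts to a mild upper-integrability bound on $v$ that the Girsanov estimates and \eqref{eq-01b} provide.
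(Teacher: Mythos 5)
Your proof is correct in substance and follows the same overall strategy as the paper: use Girsanov's entropy formula to reduce $\tau\mapsto H(P^\tau|R)$, up to $\tau$-independent terms, to the scalar functional $\int_\ii[\phi(s)\log v(s)+\psi(s)/v(s)]\,ds$ with $\phi=\nu\JJ_\cdot(\XXX)$, $\psi=\nu J_\cdot(\XXX)$; extract the first-order condition $\dot\tau\,\phi(\tau)-\psi(\tau)=K$; and tune $K$ via the normalization $\int_\ii\phi/(\psi+K)\,ds=1$, which is exactly the paper's Lemma \ref{res-37} (monotone continuous dependence on $K$, the lower bound from hypothesis (iii), the upper bound from $\int_\ii\phi\,ds=E_PN<\infty$, which the paper isolates as Lemma \ref{res-38}). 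Where you genuinely diverge is in how optimality of the stationary point is certified. The paper builds, for each $K$, the largest positively $1$-homogeneous minorant of $L(q,\cdot)+K$ (tangent line through $(0,-K)$, Orlicz norms, convex conjugates, Lemmas \ref{res-34}--\ref{res-36}); the point of the homogeneity is that the minorant's integral is invariant under \emph{every} change of time, so the comparison is immediate. You instead substitute $u=1/v$, obtain a strictly convex functional under the affine constraint $\int u=1$, and invoke the Lagrange stationarity condition; since the pointwise convexity inequality $-\phi\log u+\psi u\ge-\phi\log u^*+\psi u^*-\lambda(u-u^*)$ integrates to global optimality, this is rigorous and considerably more elementary than the paper's machinery. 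Your computation that the cross terms collapse is correct and is the same (implicit) reduction the paper performs when writing $H(P^\tau|R)=H(P_0|m)+\int_\ii L(\qq_{\tau(t)},\dot\tau(t)\vv_{\tau(t)})\,dt$.

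Two points to tighten. First, your parametrization by $u=1/v$ with $\int_\ii u\,ds=1$ only covers changes of time with $\dot\tau>0$ a.e.; a change of time in the paper's sense may have $\dot\tau=0$ on a set of positive measure, in which case $\tau$ is not invertible and $v=\dot\tau\circ\tau^{-1}$ is undefined, yet $H(P^\tau|R)$ can still be finite. The fix is to run your convexity inequality in the $t$-variable rather than the $s$-variable: for all $a\ge0$, $\phi(s)\,a\log a+\psi(s)\ge a\,h_K(s)-K$ with $h_K(s)=\phi(s)(1+\log\frac{\psi(s)+K}{\phi(s)})$, and $\int_\ii\dot\tau(t)h_K(\tau(t))\,dt=\int_\ii h_K(s)\,ds$ for every change of time; this is precisely what the paper's homogeneous minorant accomplishes. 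Second, your final worry is easily dispatched: $\widehat\tau=\Phi_\lambda^{-1}$ with $\Phi_\lambda'=\phi/(\psi+\lambda)>0$ a.e.\ by hypothesis (ii), and the inverse of a strictly increasing absolutely continuous function with a.e.\ positive derivative is absolutely continuous, so no extra upper-integrability bound on $v$ is needed.
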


Let us admit this proposition for a while and investigate its consequences.

\newcommand{\Pt}{{\widetilde{P}}}
\newcommand{\jt}{{\widetilde{J}}}

\begin{corollary}\label{res-39}
Let $\Pt$ be the solution of \eqref{Sdyn} and denote $\nu$ its marginal flow and $\jt$ its jump kernel. Suppose that 
$1<\Iii \nu\jt_t(\XXX)/\nu J_t(\XXX)\,dt\le \infty.$ 
Then there exists some $K>0$ such that
$	
\nu\jt_t(\XXX)-\nu J_t(\XXX)=K,\ \textrm{for all }t\in\ii.
$	
\end{corollary}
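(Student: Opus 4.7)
My plan is to apply Proposition \ref{res-41} to $P = \Pt$ and exploit the uniqueness of the solution of \eqref{Sdyn} to identify the optimal time change with the identity; \eqref{eq-70} then reduces to the claim of the corollary.

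First, I verify the three hypotheses of Proposition \ref{res-41} for $\Pt$. Hypothesis (i), $H(\Pt|R)<\infty$, holds because $\Pt$ solves \eqref{Sdyn} and the infimum is finite under Hypothesis \ref{hyp-00}-($\mu$). Hypothesis (iii) is precisely the assumption of the corollary. Hypothesis (ii), strict positivity of $\nu\jt_t(\XXX)$ for every $t$, follows from the standard Schrödinger factorization of the jump kernel, $\jt_{t,x}(y) = (\widetilde\phi(t,y)/\widetilde\phi(t,x))\,J_{t,x}(y)$, combined with the strict positivity of $J$ on graph edges guaranteed by Hypothesis \ref{hyp-00}-($R$) and the irreducibility of $(\XX,\sim)$.

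Second, the proposition yields a change of time $\widehat\tau$ minimizing $\tau\mapsto H(\Pt^\tau|R)$ over all changes of time and satisfying \eqref{eq-70}. I then invoke uniqueness: since $\tau(0)=0$ and $\tau(1)=1$, the endpoint marginals of $\Pt^\tau$ coincide with those of $\Pt$, hence $H(\Pt^\tau|R)\ge H(\Pt|R)$ with equality iff $\Pt^\tau=\Pt$. Thus the identity is itself a minimizer, and by strict convexity of $H(\cdot|R)$ together with the uniqueness of the Schrödinger minimizer we obtain $\Pt^{\widehat\tau}=\Pt$.

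Third, I argue that $\widehat\tau$ must be the identity. The standard Schrödinger factorization $d\Pt/dR = f_0(X_0)g_1(X_1)$ is preserved under any endpoint-fixing time change, since $X_0\circ X_\tau = X_0$ and $X_1\circ X_\tau = X_1$: one has $d\Pt^\tau/dR^\tau = f_0(X_0)g_1(X_1) = d\Pt/dR$. Combined with $\Pt^{\widehat\tau}=\Pt$, this gives $R^{\widehat\tau}=R$ on the support of $\Pt$. Since $R$ has strictly positive jump rates on all edges of the irreducible graph $(\XX,\sim)$, the identity $R^{\widehat\tau}=R$ translates into the pointwise condition $\dot{\widehat\tau}(t)\,J_{\widehat\tau(t),x}(\cdot)=J_{t,x}(\cdot)$ for almost every $t$ and on a sufficiently rich set of $x\in\XX$; together with the boundary constraints $\widehat\tau(0)=0$, $\widehat\tau(1)=1$ this forces $\widehat\tau=\mathrm{id}$. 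With $\widehat\tau=\mathrm{id}$, \eqref{eq-70} reads $\nu\jt_t(\XXX)-\nu J_t(\XXX)=K$ for a.e.\ $t\in\ii$, and right-continuity of the jump intensities (inherited from the Markov structure) extends this to every $t\in\ii$.

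The main obstacle is the third step, deducing $\widehat\tau=\mathrm{id}$ from $R^{\widehat\tau}=R$. In the time-homogeneous case where $J_{t,x}$ does not depend on $t$, the identity $\dot{\widehat\tau}\equiv 1$ is essentially immediate; in the fully time-inhomogeneous setting extra care is required. A cleaner alternative would be to revisit the proof of Proposition \ref{res-41} and observe that \eqref{eq-70} is in fact a first-order necessary condition satisfied by \emph{every} minimizer of $\tau\mapsto H(\Pt^\tau|R)$, so that the identity (which is a minimizer by the uniqueness argument above) itself satisfies \eqref{eq-70}, bypassing the third step altogether.
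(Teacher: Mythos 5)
Your proof is correct and follows the paper's argument essentially verbatim: verify the hypotheses of Proposition \ref{res-41} for $\Pt$, apply it, and use the uniqueness of the minimizer of \eqref{Sdyn} to conclude that the optimal change of time is the identity, so that \eqref{eq-70} collapses to the stated identity. The step you flag as the main obstacle (deducing $\widehat\tau=\mathrm{id}$) is treated just as tersely in the paper, which simply asserts that uniqueness of the Schr\"odinger minimizer forces the identity to be the only minimizing time change, and which removes the ``almost every $t$'' by observing that this restriction came only from requiring absolutely continuous time changes in \eqref{eq-68}, whereas $\tau=\mathrm{Id}$ is differentiable everywhere.
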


\begin{proof}
As $\Pt$ solves \eqref{Sdyn}, we have $H(\Pt|R)<\infty$ and also $\nu\jt_t(\XXX)>0$ for all $t\in\ii,$ see \cite[Prop.\,4.2]{Leo12e}. Hence, we are allowed to apply Proposition \ref{res-41}. But as $\Pt$ solves \eqref{Sdyn}, the only  time change $\widetilde\tau$ which minimizes $\tau\mapsto H(\Pt^\tau|R)$ is the identity; recall that \eqref{Sdyn} admits a unique minimizer. Therefore $\widetilde \tau(t)=t$ for all $t\in\ii$ and \eqref{eq-70} becomes $	
\nu\jt_t(\XXX)-\nu J_t(\XXX)=K,$ almost everywhere. Finally we can remove this ``almost" since this limitation comes from \eqref{eq-68} for taking \emph{absolutely continuous} changes of time into account and in the present case $\tau=\mathrm{Id}$ is differentiable everywhere. 
\end{proof}

With Corollary \ref{res-39} at hand, we can prove the main result of the section.

\begin{theorem}\label{res-0fb}
There exists some $K>0$ such that
$\mu\Jh_t(\XXX)=K,$ for all $t\in\ii.$

In particular, when the distance $d$ is the standard discrete distance $d _{\sim}$: i.e.\ for all $x,y\in\XX,$ $x\sim y \iff d _{\sim}(x,y)=1,$ then the displacement interpolation $\mu$ has a constant speed.
\end{theorem}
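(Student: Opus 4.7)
The plan is to apply Corollary~\ref{res-39} to each Schr\"odinger minimizer $\Ph^k$ (which, up to the harmless factor $1/\log k$, minimizes $H(\cdot|R^k)$ under the prescribed marginals $\mu_0,\mu_1$) and then pass to the limit $k\to\infty$, exploiting the fact that the ``correction term'' $\mu^k_t J^k_t(\XXX)$ becomes uniformly negligible as $k$ grows. Write $\mu^k_t:=\Ph^k_t$ for the marginal flow of $\Ph^k$ and $\Jh^k$ for its jump kernel.

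First I would verify the hypothesis of Corollary~\ref{res-39} applied to $\Ph^k$. From \eqref{eq-06b} and the lower bound $d\ge 1$,
\begin{equation*}
\sup_{t\in\ii}\mu^k_t J^k_t(\XXX)\le k^{-1}\sup_{t,x}J_{t,x}(\XXX)\underset{k\to\infty}{\longrightarrow} 0
\end{equation*}
by Hypothesis~\ref{hyp-00}-($R$), while (in the nontrivial case $\mu_0\neq\mu_1$) the integral $\int_0^1\mu^k\Jh^k_t(\XXX)\,dt=E_{\Ph^k}(N)$ is bounded below by a positive constant for $k$ large, so the hypothesis holds. Corollary~\ref{res-39} then produces a constant $K_k>0$ with
\begin{equation}\label{eq:planbb}
\mu^k_t\Jh^k_t(\XXX)-\mu^k_t J^k_t(\XXX)=K_k,\qquad \forall t\in\ii,
\end{equation}
and integrating \eqref{eq:planbb} on an arbitrary interval $[s,t]\subset\ii$ yields
\begin{equation*}
E_{\Ph^k}(N_{[s,t]})-\int_s^t \mu^k_r J^k_r(\XXX)\,dr=(t-s)K_k,
\end{equation*}
where $N_{[s,t]}$ counts the jumps in $[s,t]$.

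The main step is the passage to the limit. From Proposition~\ref{res-22}, Girsanov's identity \eqref{eq-14b}, the uniform bound \eqref{eq-15}, and the narrow lower semicontinuity of the continuous nonnegative length $\ell$ on $\OO$, one obtains $E_{\Ph^k}(\ell)\to E_\Ph(\ell)=W_1(\mu_0,\mu_1)<\infty$; combined with $\Ph^k\to\Ph$ in total variation (Lemma~\ref{res-20}) and the domination $N_{[s,t]}\le\ell$, a standard uniform integrability argument gives $E_{\Ph^k}(N_{[s,t]})\to E_\Ph(N_{[s,t]})=\int_s^t\mu_r\Jh_r(\XXX)\,dr$. Since the correction term vanishes uniformly, $K_k$ converges to some $K\ge 0$ with
\begin{equation*}
\int_s^t\mu_r\Jh_r(\XXX)\,dr=(t-s)K,\qquad \forall 0\le s\le t\le 1,
\end{equation*}
so $\mu_r\Jh_r(\XXX)=K$ for every $r\in\ii$ (up to the usual freedom in redefining a jump kernel on a Lebesgue-null set in $t$). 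The last statement is then immediate: when $d=d_\sim$, the jump kernel $\Jh_{t,z}$ is supported on neighbours of $z$, so $\mathrm{speed}(\mu)_t$ from \eqref{eq-61} reduces to $\mu\Jh_t(\XXX)=K$.

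The main obstacle is the passage from the $k$-dependent identity \eqref{eq:planbb} (which degenerates pointwise because $\Jh^k$, $J^k$ and $K_k$ all move with $k$) to a robust limiting identity; integrating over arbitrary subintervals $[s,t]$ reduces everything to the convergence $E_{\Ph^k}(N_{[s,t]})\to E_\Ph(N_{[s,t]})$, and the total variation convergence supplied by Lemma~\ref{res-20} together with the convergence of $E_{\Ph^k}(\ell)$ provides the needed uniform integrability to conclude.
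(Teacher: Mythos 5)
Your proposal is correct and follows the same overall strategy as the paper: apply Corollary \ref{res-39} to each Schr\"odinger minimizer $\Ph^k$ to get a constant $K_k>0$, observe that the correction term $\mu^k J^k_t(\XXX)\le \bar J/k$ is uniformly negligible, and pass to the limit. The one place where your route genuinely diverges is the key convergence $\Lim k E_{\Ph^k}(N_{[s,t]})=E_{\Ph}(N_{[s,t]})$ (the paper's \eqref{eq-71}). The paper obtains it by strengthening the $\Gamma$-convergence of Lemma \ref{res-03} to the topology on $\mathrm{P}_N(\OO)$ weakened by $C_b(\OO)\cup\{N_t\}$, i.e.\ by re-running the equi-coercivity argument with the unbounded test functions $N_t\le\ell$. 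You instead combine the total variation convergence $\Ph^k\to\Ph$ of Lemma \ref{res-20} with the moment convergence $E_{\Ph^k}(\ell)\to E_{\Ph}(\ell)$ (which does follow from Proposition \ref{res-22} and the Girsanov decomposition, using that $H(\Ph^k|R)\ge \inf(\mathrm{S})>-\infty$ and the bounds \eqref{eq-15}) and a uniform integrability argument via the domination $N_{[s,t]}\le\ell$; note that total variation convergence is what makes this clean, since it handles bounded truncations of the measurable (not continuous) functional $N_{[s,t]}$ without any continuity discussion. Both routes are valid; yours is arguably more elementary given that Lemma \ref{res-20} is already available, while the paper's reuses its $\Gamma$-convergence machinery. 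Your explicit restriction to the nondegenerate case $\mu_0\ne\mu_1$ (needed both for hypothesis (iii) of Corollary \ref{res-39} and for $K>0$) is a point the paper glosses over.
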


\begin{proof}
The second statement is a direct consequence of the first one with \eqref{eq-61}.

Let us prove the first statement.
For each $k$, let $\Ph^k$ be the solution of \eqref{Skdyn}.
We denote $\mu^k$ and $\Jh^k$ its marginal flow and jump kernel.
Let us first show that for all $t\in\ii,$
\begin{equation}\label{eq-71}
\Lim k \int _{[0,t]}\mu^k\Jh^k_s(\XXX)\,ds=\int _{[0,t]}\mu\Jh_s(\XXX)\,ds.
\end{equation}
As $\int _{[0,t]}\mu^k\Jh^k_s(\XXX)\,ds=E _{\Ph^k}N_t$ where $N_t:=\sum _{0<s<t}\1 _{\left\{X _{s^-}\not=X_s\right\}}$ is the number of jumps during $[0,t],$ \eqref{eq-71} doesn't directly follow from $\Lim k\Ph^k=\Ph$ (see Theorem \ref{res-0a}), because $N_t$ is unbounded. We must strengthen Theorem \ref{res-0a} to authorize the test functions $0\le N_t\le N_1:=N,$ $0\le t\le 1.$ To do so, it is enough to show that Lemma \ref{res-03} can be improved as follows: the sequence $(I^k)_{k\ge2}$ is equi-coercive and $\Glim k I^k=I$ in $\mathrm{P}_N(\OO):=\left\{P\in\PO; E_PN<\infty\right\}$ with respect to the topology weakened by the functions $C_b(\OO)\cup \left\{N_t;0\le t\le 1\right\} .$
For this strengthening to hold, it is sufficient  that $(I^k)_{k\ge2}$ is equi-coercive in $\mathrm{P}_N(\OO)$.
Inspecting the proof of Lemma \ref{res-03}, we see that $I^k\ge I +c$ for all $k$ and some constant $c$. Consequently, it remains to show that the function $I$ is coercive in $\mathrm{P}_N(\OO)$. But this follows from the proof of Lemma \ref{res-05}, noticing that $0\le N_t\le N\le \ell.$ This completes the proof of \eqref{eq-71}.
\\
We are going to apply Corollary \ref{res-39} to the solution $\Ph^k$ of \eqref{Skdyn}, for any large enough $k\ge1$. Hence, we have to check that $\Iii \mu^k\Jh^k_t(\XXX)/\mu^k J^k_t(\XXX)\,dt>1.$  By \eqref{eq-01b} and since it is assumed that $d(x,y)\ge1$ for all distinct $x,y\in\XX$, 
\begin{equation}\label{eq-72}
\sup _{0\le t\le1}\mu^kJ^k_t(\XXX)\le \bar J/k
\end{equation}
where $\bar J:=\sup _{t,x}J _{t,x}(\XX)<\infty$ 
and with \eqref{eq-71},   
\begin{equation}\label{eq-73}
\Lim k \Iii \mu^k\Jh^k_t(\XXX)\,dt=\Iii \mu\Jh_t(\XXX)\, dt=:K>0.
\end{equation}
It follows that $\Iii \mu^k\Jh^k_t(\XXX)/\mu^k J^k_t(\XXX)\,dt\ge Kk/(2\bar J)>1$, for any large enough $k$.  
\\
Corollary \ref{res-39} tells us that there exists $K_k>0$ such that
$	
\mu^k\Jh^k_t(\XXX)-\mu^k J^k_t(\XXX)=K_k,\ \textrm{for all }t\in\ii.
$ With  \eqref{eq-72} and \eqref{eq-73}, integrating and taking the limit leads us to 
\begin{equation*}
\Lim k \int _{[0,t]}\mu^k\Jh^k_s(\XXX)\,ds=Kt,\quad t\in\ii.
\end{equation*}
We conclude with \eqref{eq-71} that $\int _{[0,t]}\mu\Jh_s(\XXX)\,ds=Kt$ for all $0\le t\le 1,$ which is the announced result.
\end{proof}

\subsection*{Proof of Proposition \ref{res-41}}
A  consequence of Girsanov's formula, see \cite[Thm.\,2.11]{Leo11a} for a related result, is
\begin{equation*}
H(P|R)=H(P_0|m)+\Iii dt \IXX \rho^* \left(\frac{d \JJ _{t,x}}{dJ_{t,x}}(y)\right) \, P_t(dx)J_{t,x}(dy),
\end{equation*}
where 
\begin{equation*}
\rho^*(a):=\left\{\begin{array}{ll}
 a\log a-a+1& \textrm{if } a>0,\\
1& \textrm{if }a=0,\\
\infty& \textrm{if }a<0.
\end{array}\right.
\end{equation*}
Let us denote 
\begin{eqnarray*}
\nu_t(dx)&:=&P_t(dx)\\
\qq_t(dxdy)&:=&\nu_t(dx)J_{t,x}(dy)\\
\vv_t(x,y)&:=& \frac{d \JJ _{t,x}}{dJ _{t,x}}(y)
\end{eqnarray*}
Remark that $\nu=(\nu_t)_{0\le t\le1}, \qq=(\qq_t)_{0\le t\le1}$ and $\vv=(\vv_t)_{0\le t\le1}$ are fixed quantities.
For any positive bounded measure $q\in \mathcal{M}_{b,+}(\XXX)$ and any measurable function $v:\XXX\to\RR$ on $\XXX$, we define
\begin{equation*}
L(q,v):=\IXX \rho^*(v)\,dq\in[0,\infty]
\end{equation*}
For any   change of time $\tau$ we have $P^\tau_0=P_0$  and   
\begin{equation*}
H(P^\tau|R)=H(P_0|m)
+\Iii  L (\qq _{\tau(t)},\dot\tau(t) \vv_{\tau(t)})\, dt.
\end{equation*}
Therefore, we wish to minimize 
\begin{equation}\label{eq-69}
\tau\mapsto \Iii   L (\qq _{\tau(t)},\dot\tau(t) \vv_{\tau(t)})\, dt
\end{equation}
among all    changes of time $\tau:\ii\to\ii.$
To achieve this goal, some preliminary results are necessary. They are stated and proved below at Lemmas \ref{res-38}, \ref{res-36} and \ref{res-37}.
With Lemma \ref{res-38}, we see that we are in position to apply Lemma \ref{res-37}. We conclude with \eqref{eq-69}, Lemma \ref{res-36} and Lemma \ref{res-37}.

It remains to prove Lemmas \ref{res-38}, \ref{res-36} and \ref{res-37}.

\subsubsection*{Statement and  proof of Lemma \ref{res-38}}
We show that the finite entropy condition $H(P|R)<\infty$ implies that the average number of jumps under $P$ is finite.

\begin{lemma}\label{res-38}
Let $P\in\PO$ be a random walk. We denote  $(\nu\JJ_t(\XXX))_{0\le t\le1}$ its   average rate of mass displacement. Let us assume that 
$H(P|R)<\infty.$ Then, $\Iii \nu\JJ_t(\XXX)\,dt<\infty.$
\end{lemma}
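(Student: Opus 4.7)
The plan is to combine the Girsanov decomposition of $H(P|R)$ written at the very start of the proof of Proposition \ref{res-41} with a one-variable inequality that lets one dominate the identity map by $\rho^*$. The key elementary fact is that for every $a\ge 0$,
$$a \le \rho^*(a) + (e-1),$$
since the convex function $a\mapsto \rho^*(a)-a=a\log a-2a+1$ has derivative $\log a-1$ and hence attains its minimum at $a=e$ with value $1-e$. I will apply this pointwise to $a=v_t(x,y):=(d\JJ_{t,x}/dJ_{t,x})(y)$.

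Writing $q_t(dxdy):=\nu_t(dx)J_{t,x}(dy)$, the average rate of mass displacement reads
$$\int_0^1 \nu\JJ_t(\XXX)\, dt = \int_0^1 \int\int v_t(x,y)\, q_t(dxdy)\, dt,$$
and the inequality yields
$$\int_0^1 \nu\JJ_t(\XXX)\, dt \;\le\; \int_0^1\int\int \rho^*(v_t)\, dq_t\, dt \;+\; (e-1)\int_0^1 q_t(\XXX)\, dt.$$
By the Girsanov identity recalled just above, the first right-hand term equals $H(P|R)-H(P_0|m)$, while the second one is controlled by
$$\int_0^1 q_t(\XXX)\, dt = \int_0^1 \int \nu_t(dx)\,J_{t,x}(\XX)\, dt \;\le\; \sup_{t,x} J_{t,x}(\XX) \;<\; \infty,$$
thanks to Hypothesis \ref{hyp-00}($R$), namely the uniform bound \eqref{eq-01b}. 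Both contributions are finite, which yields the claim.

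The only delicate point, and presumably the main obstacle, is ensuring that the Girsanov identification $\int_0^1\int\int\rho^*(v_t)\,dq_t\,dt = H(P|R)-H(P_0|m)$ is a genuine equality of finite numbers; equivalently, that $H(P_0|m)$ itself is finite when $m$ is a possibly unbounded reference measure. Since $\rho^*\ge 0$, the decomposition already forces $H(P_0|m)\le H(P|R)<\infty$; the convention adopted in the appendix Section \ref{sec-ent} for the relative entropy with respect to a $\sigma$-finite positive measure further ensures that ``$H(P|R)<\infty$'' is read as \emph{well-defined and finite}, so that $H(P_0|m)>-\infty$ as well, which closes the argument.
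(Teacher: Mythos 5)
Your proof is correct, but it follows a genuinely different route from the paper's. The paper's argument identifies $\Iii \nu\JJ_t(\XXX)\,dt$ with $E_P N$, the expected number of jumps, introduces the normalized walk $R_*:=\frac{dP_0}{dm}(X_0)\,R$ so that $H(P|R_*)\le H(P|R)<\infty$, and then applies the Fenchel--Young inequality $ab\le a\log a+e^{b-1}$ to $a=dP/dR_*$ and $b=N$, concluding via the stochastic domination of $N$ under $R_*$ by a Poisson law of parameter $\sup_{t,x}J_{t,x}(\XX)$, which gives $E_{R_*}e^N<\infty$. You instead invoke the Girsanov entropy decomposition $H(P|R)=H(P_0|m)+\Iii\IXX\rho^*(v_t)\,dq_t\,dt$ (the formula the paper itself uses at the start of the proof of Proposition \ref{res-41}, citing \cite[Thm.\,2.11]{Leo11a}) and then apply the elementary pointwise bound $a\le\rho^*(a)+(e-1)$; this is a Young-type inequality applied at the level of the jump-rate density $v_t=d\JJ_{t,x}/dJ_{t,x}$ rather than at the level of the path-space density. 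Your computation is sound: the bound $a\le\rho^*(a)+(e-1)$ is verified correctly, $\Iii q_t(\XXX)\,dt\le\sup_{t,x}J_{t,x}(\XX)<\infty$ by \eqref{eq-01b}, and your handling of the possible unboundedness of $m$ is the right one (the additivity formula \eqref{eq-33} forces $H(P_0|m)\le H(P|R)<\infty$, while the appendix convention gives $H(P_0|m)>-\infty$, so the $\rho^*$ integral is a finite number). What each approach buys: yours is shorter and purely local in time once the Girsanov identity is granted, and it avoids any appeal to Poisson domination; the paper's is more self-contained at this point of the exposition, relying only on the additivity of entropy under conditioning and an exponential moment bound, without needing the full strength of the Girsanov decomposition before it is formally introduced in the proof of Proposition \ref{res-41}.
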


\begin{proof}
We see that $$\Iii \nu\JJ_t(\XXX)\,dt=E_P N$$ with $N=\st \1_{\left\{X _{t^-}\not=X_t\right\}}$ the number of jumps.
Let us denote $R_*:=\frac{dP_0}{dm}(X_0)\,R\in\PO$  the Markov random walk with initial marginal $P_0$ and  forward jump kernel $J.$ We have $H(P|R)=H(P_0|m)+H(P|R_*)$ which implies that $H(P|R_*)<\infty.$ Taking advantage of the Fenchel inequality $ab\le a\log a+e ^{b-1},$  we obtain
\begin{equation*}
E_PN=E _{R_*}\left(\frac{dP}{dR_*}N\right) \le H(P|R_*)+E _{R_*}e ^{N}.
\end{equation*}
But our assumption \eqref{eq-01b} implies that $N\pf R_*$ is stochastically dominated by the Poisson law with parameter $\sup _{t,x}J _{t,x}(\XX).$ Therefore, $E _{R_*}e^N<\infty$ and we have proved that $\Iii \nu\JJ_t(\XXX)\,dt<\infty$ when $H(P|R)<\infty.$
\end{proof}

\subsubsection*{Statement and proof of Lemma \ref{res-36}}

To prove Lemma \ref{res-36}, we need  the preliminary Lemma  \ref{res-34} which is stated and proved below.
We consider the  set
\begin{equation*}
\mathcal{K}:=\left\{(q,v); q\in\mathcal{M}_{b,+}(\XXX),v:\XXX\to[0,\infty) \textrm{ measurable }: L(q,v)<\infty,\ q(v>0)>0\right\} .
\end{equation*}
For any $(q,v)\in \mathcal{K},$  as $q$ is a bounded measure,  $v$ belongs to the Orlicz space $L\log L(\XXX,q)$ and its corresponding norm $|v|_q:=\|v\|_{L\log L(\XXX,q)}$ is finite. As in addition  $q(v>0)>0,$ we have $|v|_q>0$ and we are allowed to define
\begin{equation*}
L _{q,v}(\alpha):=L(q,\alpha v/|v|_q),\quad \alpha\in\RR, (q,v)\in \mathcal{K}.
\end{equation*}
Let us fix $K\ge0$ and define $\beta_K(q,v)\in(0,\infty)$ to be the slope of the affine function $\widetilde{L}_{q,v}^K:\RR\to\RR$ which is tangent to the convex function $L _{q,v}$ and satisfies $\widetilde{L}_{q,v}^K(0)=-K:$ $\widetilde{L}_{q,v}^K(\alpha)=\beta _{K}(q,v)\alpha-K,$ $\alpha\in\RR.$ Such a below tangent  exists since $L _{q,v}(0)=L(q,0)=q(\XXX)> 0.$ Furthermore, since $L _{q,v}\ge 0,$ we also have $\beta_K(q,v)>0.$

\vskip 0,5cm
\begin{center}
\scalebox{1} 
{
\begin{pspicture}(0,-3.3041992)(16.04291,3.3291993)
\definecolor{color45}{rgb}{0.2,0.6,1.0}
\definecolor{color46}{rgb}{1.0,0.0,0.2}
\psline[linewidth=0.03cm,arrowsize=0.073cm 2.0,arrowlength=2.0,arrowinset=0.4]{->}(3.6010156,-3.289199)(3.6410155,2.8508008)
\psline[linewidth=0.03cm,arrowsize=0.073cm 2.0,arrowlength=2.0,arrowinset=0.4]{->}(0.8010156,-1.4491992)(13.021015,-1.4091992)
\psline[linewidth=0.04cm,linecolor=color45](2.0210156,-2.9891992)(12.541016,0.5708008)
\psbezier[linewidth=0.04,linecolor=color46,dotsize=0.07055555cm 2.0]{*-}(3.6010156,0.17080078)(3.6610155,-1.9491992)(8.401015,-0.8091992)(8.821015,-0.6891992)(9.241015,-0.5691992)(10.981015,0.05080078)(11.861015,1.8108008)
\psline[linewidth=0.03cm,linestyle=dashed,dash=0.16cm 0.16cm](8.821015,-0.7091992)(8.821015,-1.4491992)
\psline[linewidth=0.04cm,linecolor=color46,tbarsize=0.07055555cm 5.0,rbracketlength=0.15]{)-}(3.6010156,3.1508007)(1.1610156,3.1508007)
\usefont{T1}{ptm}{m}{n}
\rput(13.082471,-1.1841992){$\alpha$}
\usefont{T1}{ptm}{m}{n}
\rput(3.3980956,-1.2441993){0}
\usefont{T1}{ptm}{m}{n}
\rput(3.0724707,-2.3241992){$-K$}
\usefont{T1}{ptm}{m}{n}
\rput(8.97247,-1.8041992){$\alpha_K(q,v)$}
\usefont{T1}{ptm}{m}{n}
\rput(0.5324707,3.1358008){$\infty$}
\usefont{T1}{ptm}{m}{n}
\rput(11.192471,1.9158008){$L_{q,v}$}
\usefont{T1}{ptm}{m}{n}
\rput(13.10247,0.03580078){$\widetilde{L}^K_{q,v}$}
\usefont{T1}{ptm}{m}{n}
\rput(2.9724708,0.17580079){$q(\XXX)$}
\psline[linewidth=0.018cm](3.5010157,-2.4291992)(3.7810156,-2.4291992)
\psbezier[linewidth=0.02,arrowsize=0.05291667cm 2.0,arrowlength=1.4,arrowinset=0.4]{<-}(11.895344,0.39080077)(11.641016,0.65080076)(11.875344,0.8805967)(12.221016,0.7508008)(12.566688,0.6210049)(12.42476,0.9908008)(12.761016,0.97998446)
\usefont{T1}{ptm}{m}{n}
\rput(14.3231735,0.9958008){slope = $\beta_K(q,v)$}
\end{pspicture} 
}
\end{center}

We denote $\alpha _{K}(q,v)>0$ the solution of $L _{q,v}(\alpha)=\widetilde{L}_{q,v}^K(\alpha),$ which happens to be positive and  unique (since $L _{q,v}$ is strictly convex). Define
\begin{equation*}
\left\{\begin{array}{lcl}
\lambda_K(q,v)&:=&\beta_K(q,v)|v|_q,\\
a_K(q,v)&:=& \alpha_K(q,v)/|v]_q,\\
v_K(q,v)&:=&a_K(q,v)v,
\end{array}\right.
\qquad (q,v)\in \mathcal{K}.
\end{equation*}
The functions $\lambda_K(q,\cdot)$, $a_K(q,\cdot)$ and $v_K(q,\cdot)$ are respectively positively homogeneous of degree 1, -1 and 0 on the convex cone $\mathcal{K}_q:=\left\{ v:(q,v)\in\mathcal{K}\right\}.$ We also define 
\begin{equation*}
L_K(q,v):=
\left\{\begin{array}{ll}
\lambda_K(q,v)-K,&\textrm{if }(q,v)\in \mathcal{K},\\
-K,&\textrm{if }q(v\not=0)=0,\\
+\infty,&\textrm{if }q(v<0)>0.
\end{array}\right.
\end{equation*}
Remark that $L_K(q,\cdot)+K$ is the largest positively 1-homogeneous function below the convex function $L(q,\cdot)+K.$

\begin{lemma}\label{res-34} For any $K\ge0, $ we have
$
L_K\le L
$
and for any $(q,v)\in \mathcal{K}$ the  equality $L(q,v)=L_K(q,v) $ is achieved if and only if $v=v_K(q,v)$ or equivalently $a_K(q,v)=1.$
\\
Furthermore, 
for any $(q,v)\in \mathcal{K}$   such that $v>0,$ $q$-a.e., 
$	
\IXX (v_K(q,v)-1)\,dq=K.
$	
\end{lemma}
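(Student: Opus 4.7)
I would structure the proof around the three claims, exploiting the convexity of $\rho^*$ together with the tangency definitions of $\alpha_K(q,v)$ and $\beta_K(q,v)$.

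For the inequality $L_K\le L$, I would split into cases according to the definition of $L_K$. If $q(v<0)>0$, both sides equal $+\infty$ because $\rho^*\equiv+\infty$ on $(-\infty,0)$. If $q(v\ne 0)=0$ then $v=0$ $q$-a.e., so $L(q,v)=\rho^*(0)\,q(\mathcal{X}^2)=q(\mathcal{X}^2)\ge 0\ge -K=L_K(q,v)$. In the remaining case $(q,v)\in\mathcal K$, the affine function $\widetilde L_{q,v}^K$ is by construction tangent from below to the convex curve $L_{q,v}$, hence lies below it on all of $\mathbb R$. Evaluating both at $\alpha=|v|_q$ gives
\begin{equation*}
L(q,v)=L_{q,v}(|v|_q)\;\ge\;\widetilde L_{q,v}^K(|v|_q)=\beta_K(q,v)\,|v|_q-K=\lambda_K(q,v)-K=L_K(q,v).
\end{equation*}

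For the equality statement, the key ingredient is the \emph{strict} convexity of $L_{q,v}$ on $[0,\infty)$, which follows from the strict convexity of $\rho^*$ on $[0,\infty)$ combined with the assumption $q(v>0)>0$. Strict convexity forces the below-tangent $\widetilde L_{q,v}^K$ to touch the graph of $L_{q,v}$ at the \emph{unique} abscissa $\alpha_K(q,v)$. Therefore equality in the previous display holds iff $|v|_q=\alpha_K(q,v)$, i.e.\ $a_K(q,v)=1$, which is exactly the condition $v=v_K(q,v)$.

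For the identity $\int(v_K(q,v)-1)\,dq=K$ under the additional assumption $v>0$ $q$-a.e., I would just compute. Write $u:=v/|v|_q$, so that $v_K(q,v)=\alpha_K u$. Since $u>0$ $q$-a.e.\ and $q$ is bounded, differentiation under the integral sign is justified and yields $L_{q,v}'(\alpha)=\int u\log(\alpha u)\,dq$ for $\alpha>0$. The two tangency conditions at $\alpha=\alpha_K$ read
\begin{equation*}
L_{q,v}(\alpha_K)=\beta_K\alpha_K-K,\qquad L_{q,v}'(\alpha_K)=\beta_K.
\end{equation*}
Expanding the first using $\rho^*(\alpha_K u)=\alpha_K u\log(\alpha_K u)-\alpha_K u+1$ and substituting the second gives
\begin{equation*}
\alpha_K\!\int u\log(\alpha_K u)\,dq-\alpha_K\!\int u\,dq+q(\mathcal{X}^2)=\beta_K\alpha_K-K=\alpha_K\!\int u\log(\alpha_K u)\,dq-K,
\end{equation*}
and the logarithmic terms cancel, leaving $\int\alpha_K u\,dq-q(\mathcal{X}^2)=K$, which is the claimed identity.

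The most delicate step I expect will be to verify that $\alpha_K(q,v)\in(0,\infty)$ is well defined, i.e.\ that such a tangent with intercept $-K$ actually exists. This follows because $L_{q,v}(0)=q(\mathcal{X}^2)>0\ge -K$ (so $\widetilde L_{q,v}^K$ starts strictly below $L_{q,v}$ at $0$) and $\rho^*$ grows super-linearly, which forces $L_{q,v}(\alpha)/\alpha\to+\infty$ as $\alpha\to+\infty$; intermediate values of the slope then produce a unique tangent from below whose prolongation hits the ordinate $-K$. The rest of the argument is essentially an exercise on convex duality and differentiation under the integral sign.
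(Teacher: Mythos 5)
Your proof is correct. For the inequality $L_K\le L$ and the characterization of equality you follow essentially the paper's route: $L_K(q,v)=\widetilde L^K_{q,v}(|v|_q)$, the below-tangent lies under the convex curve $L_{q,v}$, and strict convexity of $L_{q,v}$ (coming from the strict convexity of $\rho^*$ on $[0,\infty)$ together with $q(v>0)>0$) pins the contact point down to the unique abscissa $\alpha_K(q,v)$; your extra case analysis for $q(v<0)>0$ and $q(v\neq 0)=0$, and your verification that the tangent exists, are bookkeeping that the paper handles in the definitions preceding the lemma. Where you genuinely diverge is the identity $\IXX(v_K(q,v)-1)\,dq=K$: the paper proves it by Legendre duality, introducing $H(q,p)=\IXX(e^p-1)\,dq$ and the conjugate $H_{q,v}$ of $L_{q,v}$, using the Fenchel equality case $ab=\rho^*(a)+\rho(b)\iff a=e^b$ with $p=\log v$ to show $H(q,\log v_K(q,v))=H_{q,v}(\beta_K(q,v))$, and then invoking $H_{q,v}(\beta_K(q,v))=K$. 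Your direct expansion of the two tangency conditions, with the logarithmic terms cancelling, is shorter and more elementary; it is in substance the computation the paper carries out later in the proof of Lemma \ref{res-35} (which gives $a_K(q,v)=(K+q(\XXX))/\IXX v\,dq$ and hence the identity at once). Both routes are sound: the duality route packages $H_{q,v}(\beta_K)=K$ as a reusable fact, while yours only needs the differentiability of $L_{q,v}$ on $(0,\infty)$, which you correctly justify via the $L\log L$ integrability of $v$.
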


\begin{proof}
Since
 $L_K(q,v)=\widetilde{L}_{q,v}^K(|v|_q),$ for any $(v,q)\in \mathcal{K},$ the inequality $\widetilde{L}^K _{q,v}\le L _{q,v}$ implies that $L_K\le L$ with equality on $\mathcal{K}$ if and only if $|v|_q=\alpha_K(q,v),$ that is $v=v_K(q,v).$

Let us prove last statement. 
Denoting 
$	
H _{q,v}(\beta):= \sup _{\alpha\in\RR}\left\{\alpha \beta-L _{q,v}(\alpha)\right\} \in\RR,$ $ \beta\in\RR,
$	
the convex conjugate of $L _{q,v},$ we have 
\begin{equation}\label{eq-67}
H _{q,v}(\beta_K(q,v))=K.
\end{equation}
 Let us define, for any measurable function $p:\XXX\to\RR,$
\begin{equation*}
H(q,p):=\IXX \rho(p)\, dq=\IXX (e^p-1)\,dq\in (- \infty,\infty]
\end{equation*}
where
\begin{equation*}
\rho(b):= e^b-1,\quad b\in\RR
\end{equation*}
is the convex conjugate of $\rho^*.$
For any real numbers $a,b:$ $ab\le \rho^*(a)+\rho(b)$ with equality if and only if $a=e^b.$ Hence, for any $(q,v)\in \mathcal{K}$ and any measurable function $p$ on $\XXX$ such that $\IXX e^p\, dq<\infty,$ we have  $-\infty\le \langle p,v\rangle _q:= \IXX pv\,dq\le L(q,v)+H(q,p)<\infty$ with equality if and only if $v=e^p,$ $q$-a.e.
As  $v>0,$ $q$-a.e., the equality is realized with $p=p^v:=\log v,$ that is
\begin{equation}\label{eq-66}
L(q,v)=\langle p^v,v\rangle _q-H(q,p^v).
\end{equation}
Now, let $\alpha:=|v|_q>0$ be given. For any $b\in\RR,$ $\alpha b\le L _{q,v}(\alpha)+H _{q,v}(b)$ and the equality is realized at $\beta=L _{q,v}'(\alpha)=\langle \log v, v/\alpha\rangle _q=\langle p^v,v\rangle_q /\alpha.$ Therefore,
\begin{equation*}
L(q,v)=L _{q,v}(\alpha)=\alpha \beta-H _{q,v}(\beta)=\langle p^v,v\rangle _q-H _{q,v}(\beta).
\end{equation*}
Comparing with \eqref{eq-66} leads us to 
$
H(q,p^v)=H _{q,v}(\beta).
$
In particular, with $v=v_K(q,v),$ we see that $\alpha=|v_K(q,v)|_q=\alpha_K(q,v)$ and the corresponding conjugate parameter is $\beta=\beta_K(q,v).$ It follows with \eqref{eq-67} that $H(q,\log v_K(q,v))=H(q,p ^{v_K(q,v)})=H _{q,v}(\beta_K(q,v))=K,$ which is the desired result.
\end{proof}

As a consequence of Lemma \ref{res-34}, we obtain the following result.

\begin{lemma}\label{res-36}
Suppose that $\widehat \tau$ is a change of time which solves  the differential equation
\begin{equation}\label{eq-68}
\dot \tau(t)=a_K(\qq _{\tau(t)},\vv _{\tau(t)}), \quad t\in\ii, \textrm{a.e.}
\end{equation}
for some $K\ge0.$ Then, $\tau\mapsto \Iii L(\qq _{\tau(t)},\dot \tau(t)\vv _{\tau(t)})\,dt $ attains its minimum value among all  changes of time at $\tau=\widehat\tau.$ Moreover, if for almost all $t\in\ii$, $\vv_t>0,$ then 
\begin{equation*}
\IXX (\widehat\vv_t-1)\,d\widehat\qq_t=K,\quad \textrm{for a.e. }t\in\ii
\end{equation*}
where we denote $\widehat\qq_t:=\qq _{\widehat\tau(t)}$ and $\widehat \vv_t:=\dot {\widehat\tau}(t)\vv _{\widehat\tau(t)}$.
\end{lemma}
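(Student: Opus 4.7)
\textbf{Proof plan for Lemma \ref{res-36}.} The idea is to compare the functional $\tau\mapsto\Iii L(\qq_{\tau(t)},\dot\tau(t)\vv_{\tau(t)})\,dt$ with its natural 1-homogeneous minorant built from $L_K$, exploit the $\tau$-invariance of the latter via a change of variables, and then check that the minorant is attained exactly at $\widehat\tau$.

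First, I would recall from Lemma \ref{res-34} that $L_K(q,\cdot)+K$ is the largest positively 1-homogeneous function below $L(q,\cdot)+K$, so in particular $L\ge L_K$ with equality on $\mathcal{K}$ if and only if $v=v_K(q,v)=a_K(q,v)\,v$. Since $\dot\tau\ge 0$, the 1-homogeneity of $\lambda_K(q,\cdot)$ on $\mathcal{K}_q$ yields, whenever $(\qq_{\tau(t)},\vv_{\tau(t)})\in\mathcal{K}$,
\begin{equation*}
L_K(\qq_{\tau(t)},\dot\tau(t)\vv_{\tau(t)})
=\dot\tau(t)\,\lambda_K(\qq_{\tau(t)},\vv_{\tau(t)})-K.
\end{equation*}
(On the degenerate set where $\qq_{\tau(t)}(\vv_{\tau(t)}\neq 0)=0$ both sides equal $-K$, and $L$ and $L_K$ coincide; on $\{\qq(\vv<0)>0\}$ both are $+\infty$.)

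Next, I would perform the absolutely continuous change of variable $s=\tau(t)$, $ds=\dot\tau(t)\,dt$, noting $\tau(0)=0$, $\tau(1)=1$, to obtain
\begin{equation*}
\Iii L_K(\qq_{\tau(t)},\dot\tau(t)\vv_{\tau(t)})\,dt
=\Iii \lambda_K(\qq_s,\vv_s)\,ds-K,
\end{equation*}
which is independent of $\tau$. Combining with $L\ge L_K$, one gets
\begin{equation*}
\Iii L(\qq_{\tau(t)},\dot\tau(t)\vv_{\tau(t)})\,dt
\ge \Iii \lambda_K(\qq_s,\vv_s)\,ds-K
\end{equation*}
for every change of time $\tau$. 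On the other hand, the ODE $\dot{\widehat\tau}(t)=a_K(\qq_{\widehat\tau(t)},\vv_{\widehat\tau(t)})$ gives $\dot{\widehat\tau}(t)\vv_{\widehat\tau(t)}=v_K(\qq_{\widehat\tau(t)},\vv_{\widehat\tau(t)})$, which is exactly the equality case in Lemma \ref{res-34}. Hence the lower bound is attained at $\widehat\tau$, showing that $\widehat\tau$ is a minimizer. The main technical point to watch is that $\widehat\tau$ is indeed an admissible change of time (absolutely continuous with $\widehat\tau(0)=0$, $\widehat\tau(1)=1$); in the applications of this lemma this is ensured by construction, and the argument above does not require more than measurability of $\dot{\widehat\tau}$ and the change-of-variable formula.

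Finally, for the second assertion, the hypothesis $\vv_t>0$ a.e.\ means $\vv_{\widehat\tau(t)}>0$ $\widehat\qq_t$-a.e.\ for a.e.\ $t$, so one can apply the last statement of Lemma \ref{res-34} with $q=\widehat\qq_t$ and $v=\vv_{\widehat\tau(t)}$. Since $\widehat\vv_t=\dot{\widehat\tau}(t)\vv_{\widehat\tau(t)}=v_K(\widehat\qq_t,\vv_{\widehat\tau(t)})$, this gives
\begin{equation*}
\IXX(\widehat\vv_t-1)\,d\widehat\qq_t=\IXX\bigl(v_K(\widehat\qq_t,\vv_{\widehat\tau(t)})-1\bigr)\,d\widehat\qq_t=K
\end{equation*}
for a.e.\ $t\in\ii$, as desired. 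The hardest step is the first one: writing $L_K$ in the separated form $\dot\tau\cdot\lambda_K(\qq_{\tau(t)},\vv_{\tau(t)})-K$ so that the change of variables erases the dependence on $\tau$; everything else is a direct application of Lemma \ref{res-34}.
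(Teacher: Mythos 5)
Your proposal is correct and follows essentially the same route as the paper: the identity $L_K(\qq_{\tau(t)},\dot\tau(t)\vv_{\tau(t)})=\dot\tau(t)\lambda_K(\qq_{\tau(t)},\vv_{\tau(t)})-K$ plus a change of variables to make the $L_K$-integral $\tau$-independent, the inequality $L_K\le L$ with its equality case from Lemma \ref{res-34} realized at $\widehat\tau$ via the ODE, and the last statement of Lemma \ref{res-34} for the conservation identity. The only difference is cosmetic (you spell out the homogeneity step and the degenerate cases slightly more explicitly).
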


\begin{proof}
As $\widehat \tau$ solves \eqref{eq-68}, we have $a_K(\widehat\qq_t,\widehat\vv_t)=1,$ for a.e. $t\in\ii.$ With Lemma \ref{res-34}, this implies that 
\begin{equation*}
\Iii L(\widehat\qq_t,\widehat\vv_t)\,dt=\Iii L_K(\widehat\qq_t,\widehat\vv_t)\,dt.
\end{equation*}
As $\lambda_K(q,\cdot)$ is 1-homogeneous, we see that for any  change of time $\tau,$
\begin{equation*}
\begin{split}
\Iii L_K(\qq _{\tau(t)},\dot\tau(t)\vv _{\tau(t)})\,dt
	&=\Iii \lambda_K(\qq _{\tau(t)},\dot\tau(t)\vv _{\tau(t)})\,dt-K
	\\&=\Iii \lambda_K(\qq_t,\vv_t)\,dt-K
	=\Iii L_K(\qq_t,\vv_t)\,dt
\end{split}
\end{equation*}
is a quantity which doesn't depend on $\tau.$ Hence, for any  change of time $\tau,$
\begin{equation*}
\begin{split}
\Iii L(\widehat\qq_t,\widehat\vv_t)\,dt=\Iii L_K(\widehat\qq_t,\widehat\vv_t)\,dt
&=\Iii L_K(\qq _{\tau(t)},\dot\tau(t)\vv _{\tau(t)})\,dt
	\\&\le \Iii L(\qq _{\tau(t)},\dot\tau(t)\vv _{\tau(t)})\,dt
\end{split}
\end{equation*}
where the last inequality follows from $L_K\le L,$ see Lemma \ref{res-34}.
This proves the minimal attainment at $\widehat \tau.$
\\
The last statement follows from $\eqref{eq-68}\iff \widehat\vv_t=v_K(\widehat\qq_t,\widehat\vv_t)$ and Lemma \ref{res-34}.
\end{proof}

\subsubsection*{Statement and proof of Lemma \ref{res-37}}

Now, we prove Lemma  \ref{res-37}. To do so,  we need the following preliminary Lemma \ref{res-35}.

\begin{lemma}\label{res-35}
For each $K\ge0$, 
\begin{equation*}
a_K(q,v)=\frac{K+q(\XXX)}{\IXX v\,dq},\quad (q,v)\in \mathcal{K}.
\end{equation*}
\end{lemma}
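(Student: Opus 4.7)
The plan is to reduce the two tangency conditions defining $a_K(q,v)$ to a single scalar equation and solve it by computing $L(q,\cdot)$ along the ray $\{av:a\ge 0\}$. Concretely, introduce the one-variable function
\begin{equation*}
f(a):=L(q,av)=\IXX \rho^*(av)\,dq,\qquad a\ge 0.
\end{equation*}
Since $L_{q,v}(\alpha)=f(\alpha/|v|_q)$, the conditions $L_{q,v}(\alpha_K)=\beta_K(q,v)\,\alpha_K-K$ and $L_{q,v}'(\alpha_K)=\beta_K(q,v)$ that characterize the below-tangent $\widetilde L^K_{q,v}$ reduce, after eliminating $\beta_K(q,v)$ and $|v|_q$, to the single scalar identity
\begin{equation*}
a_K\,f'(a_K)-f(a_K)=K,
\end{equation*}
with $a_K:=a_K(q,v)=\alpha_K(q,v)/|v|_q$. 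This is the clean form of the tangency condition that I will use.

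Second, I would compute $f$ explicitly. Splitting the integral according to $\{v=0\}$ and $\{v>0\}$, and using $\rho^*(0)=1$ together with $\rho^*(t)=t\log t-t+1$ for $t>0$, one obtains
\begin{equation*}
f(a)=q(\XXX)+aM(\log a-1)+aS,\qquad a>0,
\end{equation*}
where $M:=\IXX v\,dq>0$ (positive because $q(v>0)>0$ and $v>0$ on that set) and $S:=\IXX \1_{\{v>0\}}v\log v\,dq\in\RR$ (finite, since $L(q,v)<\infty$ and $q$ is bounded, which also controls the negative part via $t\log t\ge -1/e$). A one-line derivation yields $f'(a)=M\log a+S$, and substitution into the scalar tangency identity makes the $\log$ terms cancel:
\begin{equation*}
a_K\,f'(a_K)-f(a_K)=a_KM-q(\XXX)=K,
\end{equation*}
which gives $a_K(q,v)=(K+q(\XXX))/M$, the announced formula.

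The only delicate point is the reduction from the two tangency conditions on $L_{q,v}$ to the single scalar identity on $f$: one must check that the normalization factor $1/|v|_q$ in the definition $L_{q,v}(\alpha)=f(\alpha/|v|_q)$ cancels consistently on both sides, and that positivity/uniqueness of $a_K$ (inherited from that of $\alpha_K$, which in turn comes from the strict convexity of $L_{q,v}$ and from $L_{q,v}(0)=q(\XXX)>0$) survives this change of variable. Once this bookkeeping is done, the remainder is an elementary computation. As a bonus, the argument bypasses the identity of Lemma~\ref{res-34} that required $v>0$ $q$-a.e., and therefore applies directly to every $(q,v)\in\mathcal{K}$ as stated.
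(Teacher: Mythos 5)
Your proof is correct and follows essentially the same route as the paper's: eliminating the slope from the two tangency conditions gives the scalar identity $a_K f'(a_K)-f(a_K)=K$, which is precisely the statement $H_{q,v}(\beta_K(q,v))=K$ of \eqref{eq-67} that the paper evaluates via the pointwise Fenchel identity $t\rho^{*\prime}(t)-\rho^{*}(t)=t-1$, your explicit closed form for $f$ being just a global version of that same computation. (One minor caveat: the paper's own proof of this lemma likewise works for every $(q,v)\in\mathcal{K}$ without assuming $v>0$ $q$-a.e., so the ``bonus'' you claim at the end is not actually a gain over the paper.)
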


\begin{proof}
Let us pick $(q,v)\in\mathcal{K}$ and $\beta\in\RR.$ We have $H _{q,v}(\beta)=\sup _{a\in\RR}\left\{a \beta-L _{q,v}(a)\right\}$ and the supremum is attained at $\alpha$, solution of $\beta=\IXX \rho ^{*\prime}(\alpha u)u\, dq=\IXX \log(\alpha u)u\,dq$ where $u:=v/|v|_q.$  Therefore, $H _{q,v}(\beta)=\alpha \beta- L _{q,v}(\alpha)=\IXX [\alpha u\log(\alpha u)-\rho^*(\alpha u)]\,dq=\IXX (\alpha u-1)\,dq.$
Choosing $\beta=\beta_K(q,v)$ corresponds to $\alpha_K(q,v)$ and we obtain with \eqref{eq-67} that $K=H _{q,v}(\beta_K(q,v))=\alpha_K(q,v)\IXX u\,dq-q(\XXX).$ Hence, $a_K(q,v)=\alpha_K(q,v)/|v|_q=(K+q(\XXX))/\IXX v\,dq.$
\end{proof}

As a consequence of Lemma \ref{res-35}, we obtain the following result.

\begin{lemma}\label{res-37}
Let $\nu$ and $j$ be such that 
\begin{enumerate}[(i)]
\item
$\Iii \nu j_t(\XXX)\,dt<\infty;$
\item
$\nu j_t(\XXX)>0,$ for all $t\in\ii;$
\item
$1<\Iii \nu j_t(\XXX)/\nu J_t(\XXX)\, dt\le \infty.$
\end{enumerate}
Then, there exist a constant  $\widehat K>0$ and a change of time $\widehat \tau$ such that \eqref{eq-68} holds: 
\begin{equation*}
\dot{\widehat \tau}(t)=a _{\widehat K}(\qq _{\widehat\tau(t)},\vv _{\widehat\tau(t)}), \quad t\in\ii, \textrm{a.e.}
\end{equation*}
\end{lemma}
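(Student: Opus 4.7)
The plan is to reduce \eqref{eq-68} to an implicit function problem via Lemma \ref{res-35}, pick $\widehat K$ by an intermediate value argument, and construct $\widehat\tau$ as the inverse of a suitable absolutely continuous bijection of $\ii$. Writing $\alpha(s) := \nu J_s(\XXX)$ and $\beta(s) := \nu\JJ_s(\XXX)$, Lemma \ref{res-35} recasts \eqref{eq-68} as $\dot{\widehat\tau}(t) = (\widehat K + \alpha(\widehat\tau(t)))/\beta(\widehat\tau(t))$ a.e. If a strictly increasing AC solution $\widehat\tau:\ii\to\ii$ with $\widehat\tau(0)=0$, $\widehat\tau(1)=1$ exists, changing variables via $s = \widehat\tau(t)$ forces $\widehat\tau$ to be the inverse of
$$F_K(r) := \int_0^r \frac{\beta(s)}{K+\alpha(s)}\,ds$$
for the correct value $K = \widehat K$, so I would work backwards from this formula.

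The first step is to find $\widehat K$. Set $\Phi(K) := F_K(1)$. Assumption (i) gives $\beta \in L^1(\ii)$, and the uniform bound $\alpha \le \bar J := \sup_{t,x} J_{t,x}(\XX) < \infty$ is furnished by \eqref{eq-01b}. Dominated convergence as $K\to\infty$ and monotone convergence as $K\searrow 0$ then show that $\Phi:(0,\infty)\to(0,\infty)$ is continuous and strictly decreasing, with $\lim_{K\to\infty}\Phi(K)=0$ and $\lim_{K\searrow 0}\Phi(K)=\int_0^1 \beta(s)/\alpha(s)\,ds$, a quantity which exceeds $1$ (possibly equal to $+\infty$) by hypothesis (iii). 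The intermediate value theorem yields a unique $\widehat K > 0$ with $\Phi(\widehat K) = 1$.

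The second step, which I expect to be the main technical obstacle, is to show that $\widehat\tau := F_{\widehat K}^{-1}$ is an admissible change of time. Write $F := F_{\widehat K}$. By (ii) and the bound on $\alpha$, $F$ is AC with $F'(s)=\beta(s)/(\widehat K+\alpha(s))>0$ a.e., hence strictly increasing; combined with $F(0)=0$, $F(1)=1$, this makes $F$ a continuous bijection of $\ii$ and $\widehat\tau$ a well-defined continuous strictly increasing function on $\ii$. The delicate point is the absolute continuity of $\widehat\tau$. Using the area formula $|F(A)| = \int_A F'\,ds$ valid for AC monotone $F$ applied to $A := F^{-1}(E)$ with $E\subset \ii$ of Lebesgue measure zero gives $\int_{F^{-1}(E)} F'\,ds = |E| = 0$; since $F' > 0$ a.e., this forces $|F^{-1}(E)| = 0$. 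This Luzin (N) property, combined with monotonicity and continuity, yields the absolute continuity of $\widehat\tau$. Observe that without hypothesis (ii), $\beta$ could vanish on a positive measure set, $F$ would possess a flat portion, and $F^{-1}$ would fail to be AC; assumption (ii) is precisely what rules this out.

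To conclude, I would differentiate the identity $F(\widehat\tau(t)) = t$ via the chain rule for compositions of AC monotone functions, obtaining $F'(\widehat\tau(t))\,\dot{\widehat\tau}(t) = 1$ for a.e.\ $t$. Rearranging and invoking Lemma \ref{res-35} gives $\dot{\widehat\tau}(t) = (\widehat K+\alpha(\widehat\tau(t)))/\beta(\widehat\tau(t)) = a_{\widehat K}(\qq_{\widehat\tau(t)},\vv_{\widehat\tau(t)})$ for a.e.\ $t$, which is exactly \eqref{eq-68} with $K=\widehat K$.
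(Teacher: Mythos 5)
Your proof is correct and follows essentially the same route as the paper's: rewrite \eqref{eq-68} via Lemma \ref{res-35} as $\dot\tau(t)\,\varphi_K(\tau(t))=1$ with $\varphi_K(s)=\nu \JJ_s(\XXX)/(K+\nu J_s(\XXX))$, set $\widehat\tau=\Phi_{\widehat K}^{-1}$ where $\Phi_K(t)=\int_0^t\varphi_K(s)\,ds$, and select $\widehat K$ by an intermediate value argument using (iii) for $\Phi_0(1)>1$ and (i) for $\lim_{K\to\infty}\Phi_K(1)=0$. Your explicit verification that the inverse of $\Phi_{\widehat K}$ is absolutely continuous (via Luzin's property (N) and $\Phi_{\widehat K}'>0$ a.e., which rests on hypothesis (ii)) is a detail the paper passes over silently, and it is a worthwhile addition.
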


\begin{proof}
Plugging $\qq_s:=\nu J_s$ and $\vv_s(x,y):=d \JJ _{s,x}/dJ _{s,x}(y)$ into $a_K(\qq_s,\vv_s),$ we see that (ii) implies that $(\qq_s,\vv_s)\in \mathcal{K}$ for all $0\le s\le 1.$
Hence, with Lemma \ref{res-35} we obtain that \eqref{eq-68} writes as
\begin{equation*}
\dot \tau(t) \varphi_K(\tau(t))=1
\end{equation*} 
where
\begin{equation*}
\varphi_K(s):=a_K(\qq_s,\vv_s)^{-1}=\frac{\nu \JJ_s(\XXX)}{K+\nu J_s(\XXX)}, \quad 0\le s\le 1.
\end{equation*}
Let $\Phi_K(t):=\int_0^t \varphi_K(s)\,ds\in [0,\infty].$ By assumption (i), for any $K>0,$  $\Phi_K(1)<\infty.$ It follows that $\Phi_K$ is absolutely continuous. Assumption (ii) implies that it is  strictly increasing so that one can define $\tau_K(t):=\Phi_K ^{-1}(t)$ which   solves $\dot \tau(t)=a_K(\qq _{\tau(t)},\vv _{\tau(t)})$  almost everywhere on $[0,\Phi_K(1))$.
\\
It remains to show that there exists some $\widehat K>0$ such that $\Phi _{\widehat K}(1)=1.$ But this is insured by the assumption (iii) which states that $\Phi_0(1)>1,$ since with (i) we see that $K\in(0,\infty)\mapsto \Phi_K(1)\in(0,\infty)$ is a  continuous decreasing function from $\Phi_0(1)$ to $\Lim K \Phi_K(1)=0$. 
\end{proof}

\appendix

\section{Random walk on a graph}\label{sec-RW}

We give some basic informations about random walks on a graph. 

\subsection*{The jump kernel}

A Markov random walk on the graph $(\XX,\sim~)$ is a time-continuous Markov process which is specified by its infinitesimal generator $L=(L_t)_{0\le t<1}$ acting on any real function $u\in\RR^\XX$ with a finite support via the formula
\begin{equation*}
L_t u(x)=\sy[u(y)-u(x)]\,\JJ_{t,x}(y),
\qquad  x\in\XX,\ 0\le t <1
\end{equation*}
where $\JJ_{t,x}(y)\ge0$   is the average frequency of jumps from $x$ to $y$ at time $t$. 
As a convention, we set $\JJ_{t,x}(y)=0$ as soon as $x$ and $y$ are not neighbours. The corresponding jump kernel is
$$
\JJ _{t,x}:=\sy \JJ _{t,x}(y)\delta_y\in\MX,\qquad x\in\XX,\ 0\le t<1.
$$  
The operator $L$ is well-defined for any bounded function $u\in\RR^\XX$ provided that 
\begin{equation}\label{eq-01}
\sup _{x\in\XX}\JJ _{t,x}(\XX)<\infty, 
\end{equation}
where  $\JJ _{t,x}(\XX):=\sy \JJ _{t,x}(y)$ denotes  the global jump intensity  at $x\in\XX$. The bound \eqref{eq-01} ensures that the random walk performs almost surely  finitely many jumps during the unit time interval $[0,1].$ Therefore, $\OO$ as defined in this article is the relevant path space to be considered.
When $\JJ$ is assumed to satisfy \eqref{eq-01}, it uniquely specifies $Q\in\MO$ up to its initial law $Q_0\in\MX$. 
\\
In case $L$ doesn't depend on $t$, the random walk is said to be time-homogeneous.
In this special case, its dynamics is described as follows. Once  at site $x,$ the walker waits during a random time with exponential law  $\mathcal{E}(\JJ_x(\XX))$, and then decides to jump at $y$ according to the probability $\JJ_x(\XX) ^{-1}\sy \JJ_x(y) \delta_y,$ and so on; all these random events being mutually independent.

\subsection*{Some interesting examples of reference random walks $R$}

One may require  that $R$ is reversible. This means that it is time-homogeneous and that there is a (possibly unbounded) positive measure $m\in\MX$ on $\XX$ such that, not only $R$ is $m$-stationary i.e.: 
$R_t=m, \forall 0\le t\le1,$ but also that $R$ is invariant with respect to time reversal i.e.: for any subinterval $[u,v]\subset\ii,$ $$(X _{(u+v-t)^-};u\le t\le v)\pf R=(X_t;u\le t\le v)\pf R.$$
This happens if and only if the following time-homogeneous detailed balance condition
\begin{equation}\label{eq-02}
m_x J_x(y)=m_y J_y(x),\quad \forall x\sim y\in\XX
\end{equation}
is satisfied; compare \eqref{eq-94}. As it is assumed that 
$ 
J_x(y)>0, \forall x\sim y
$ 
 and the graph is irreducible, this implies that $m_x>0$ for all $x\in\XX.$ 

\subsubsection*{Simple random walk}

An important example of such a walk is the  simple random walk $R^o\in\PO$ on $\XX.$ 
The dynamics of $R^o$  is specified by the jump kernel
\begin{equation}\label{eq-98a}
J^o_x:= n_x ^{-1}\sy \delta_y,\qquad x\in\XX.
\end{equation}
The successive waiting times are independent and identically distributed with the exponential law $\mathcal{E}(1)$ and the walker  jumps from any site choosing a neighbour uniformly at random. Solving \eqref{eq-02}, one sees that the corresponding reversing measures are multiples of
\begin{equation}\label{eq-98b}
m^o:=\sx n_x \delta_x.
\end{equation}
Note that $m^o$ is unbounded whenever $\XX$ is an infinite set, since the irreducibility assumption implies that $n_x\ge 1$ for all $x.$\\ As the simple random walk is analogous to the Brownian motion on a Riemannian manifold, the measure $m^o$ plays the role of the volume measure on the graph.

\subsubsection*{Counting random walk}
It corresponds to 
$ 
J_x=\sy \delta_y, x\in\XX
$ 
whose reversing measure is the counting measure $m=\sum _{x\in\XX}\delta_x.$

\subsubsection*{A generic class of $m$-reversible random walks}

Take some measure $m=\sx m_x \delta_x$ on $\XX$ with $m_x>0, \forall x\in \XX$  and consider the jump kernel
\begin{equation}\label{eq-44}
J_x:=\sy \frac{s(x,y)}{\sqrt{n_xn_y}}\sqrt{\frac{m_y}{m_x}}\,\delta_y,\qquad x\in\XX,
\end{equation}
where $s$ is a symmetric function.
Assume that
 there exists some constant $1\le c<\infty$ such that
\begin{equation*}
m_y/n_y\le c\ m_x/n_x,\quad \forall x\sim y
\end{equation*}
and some $0< \sigma<\infty$ such that the symmetric function $s$ satisfies
\begin{equation*}
0<s(x,y)=s(y,x)\le \sigma,\qquad \forall x\sim y\in\XX.
\end{equation*}
Then, $J$  verifies \eqref{eq-01}, so that $R$ is a measure on $\OO.$
As  $J$ clearly verifies \eqref{eq-02},  $R$ is $m$-reversible. Moreover,  $R$ is equivalent to the simple random walk $R^o$, i.e.\ for any measurable $A\subset\OO,$ $R(A)>0\iff R^o(A)>0$.

\section{Relative entropy}\label{sec-ent}

This  section is a short version of \cite[\S\,2]{Leo12b} which we refer to for more detail.
Let $r$ be some $\sigma$-finite positive measure on some  space $Y$. The relative entropy of the probability measure $p$ with respect to $r$ is loosely defined by
\begin{equation*}
H(p|r):=\int_Y \log(dp/dr)\, dp\in (-\infty,\infty],\qquad p\in \PY
\end{equation*}
if $p\ll r$ and $H(p|r)=\infty$ otherwise. 
\\
More precisely, when $r$ is a probability measure,  we have $$H(p|r)=\int_Y h(dp/dr)\,dr\in[0,\infty],\qquad p,r\in\PY$$ with $h(a)=a\log a-a+1\ge 0$ for all $a\ge0,$ (set $h(0)=1).$ Hence,  the above definition  is meaningful. It follows from the strict convexity of $h$ that $H(\cdot|r)$ is also strictly convex. In addition, since $h(a)=\inf h=0 \iff a=1,$ we also have for any $ p\in \PY,$
\begin{equation}\label{eq-08}
H( p|r)=\inf H(\cdot|r)=0\iff  p=r.
\end{equation}
If $r$ is unbounded, one must restrict the definition of $H(\cdot|r)$ to some subset of $\PY$ as follows. As $r$ is assumed to be $\sigma$-finite, there exists a  measurable function $W:Y\to [1,\infty)$ such that
\begin{equation}\label{eq-09}
z_W:=\int_Y e ^{-W}\, dr<\infty.
\end{equation}
Define the probability measure $r_W:= z_W ^{-1}e ^{-W}\,r$ so that $\log(dp/dr)=\log(dp/dr_W)-W-\log z_W.$ It follows that for any $p\in \PY$ satisfying $\int_Y W\, dp<\infty,$ the formula 
\begin{equation}\label{eq-10}
H(p|r):=H(p|r_W)-\int_Y W\,dp-\log z_W\in (-\infty,\infty]
\end{equation}
is a meaningful definition of the relative entropy which is coherent in the following sense. If $\int_Y W'\,dp<\infty$ for another measurable function $W':Y\to[0,\infty)$ such that $z_{W'}<\infty,$ then $H(p|r_W)-\int_Y W\,dp-\log z_W=H(p|r _{W'})-\int_Y W'\,dp-\log z_{W'}\in (-\infty,\infty]$.
\\
Therefore, $H(p|r)$ is well-defined for any $p\in \PY$ such that $\int_Y W\,dp<\infty$ for some measurable nonnegative function $W$ verifying \eqref{eq-09}. 
\\
It follows from the strict convexity of $H(\cdot|r_W)$ and \eqref{eq-10} that $H(\cdot|r)$ is also strictly convex.

Let $Y$ and $Z$ be two  Polish spaces equipped with their Borel $\sigma$-fields. For any measurable function $\phi:Y\to Z$ and any measure $q\in \mathrm{M}_+(Y)$ we have the  disintegration formula
\begin{equation*}
q(dy)=\int _{Z} q(dy|\phi=z)\, \phi\pf q(dz)
\end{equation*}
where $z\in Z\mapsto q(\cdot|\phi=z)\in \mathrm{P}(Y)$ is measurable, and the following additivity property
\begin{equation}\label{eq-33}
H(p|r)=H(\phi\pf p|\phi\pf r)+\int _{Z} H\Big(p(\cdot\mid \phi=z)|r(\cdot\mid\phi=z)\Big)\,\phi\pf p(dz),
\end{equation}
 is valid for any $p\in \PY$ and any $\sigma$-finite $r\in \mathrm{M}_+(Y).$
In particular, as $r(\cdot\mid\phi=z)$ is a probability measure for each $z$, with \eqref{eq-08} we see that
\begin{equation*}
H(\phi\pf p|\phi\pf r)\le H(p|r),\quad \forall p\in\PY
\end{equation*}
with equality if and only if 
\begin{equation}\label{eq-34}
p(\cdot\mid \phi=z)=r(\cdot\mid\phi=z),\quad \forall z, \phi\pf p\as
\end{equation}


\begin{thebibliography}{CEMS01}

\bibitem[AB93]{AB93}
G.~Anzellotti and S.~Baldo.
\newblock Asymptotic development by {$\Gamma$}-convergence.
\newblock {\em Appl. Math. Optim.}, 27(2):105--123, 1993.

\bibitem[AKP04]{AKP04}
L.~Ambrosio, B.~Kirchheim, and A.~Pratelli.
\newblock Existence of optimal transport maps for crystalline norms.
\newblock {\em Duke Math. J.}, 125(2):207--241, 2004.

\bibitem[Amb03]{Am03}
L.~Ambrosio.
\newblock Lecture notes on optimal transport problems.
\newblock In {\em Mathematical aspects of evolving interfaces ({F}unchal,
  2000)}, volume 1812 of {\em Lecture Notes in Math.} Springer, Berlin, 2003.

\bibitem[AP03]{AP03}
L.~Ambrosio and A.~Pratelli.
\newblock Existence and stability results in the {$L\sp 1$} theory of optimal
  transportation.
\newblock In L.A. Caffarelli and S.~Salsa, editors, {\em Optimal transportation
  and applications (Martina Franca, 2001)}, volume 1813 of {\em Lecture Notes
  in Math.}, pages 123--160. Springer, Berlin, 2003.

\bibitem[Att96]{Att96}
H.~Attouch.
\newblock Viscosity solutions of minimization problems.
\newblock {\em SIAM J. Optim.}, 3:769--806, 1996.

\bibitem[BB00]{BB00}
J.-D. Benamou and Y.~Brenier.
\newblock A computational fluid mechanics solution to the {M}onge-{K}antorovich
  mass transfer problem.
\newblock {\em Numer. Math.}, 84(3):375--393, 2000.

\bibitem[BS09]{BSt09}
A.-I. Bonciocat and K.T. Sturm.
\newblock Mass transportation and rough curvature bounds for discrete spaces.
\newblock {\em J. Funct. Anal.}, 256:2944--2966, 2009.

\bibitem[CDP11]{CDP11}
T.~Champion and L.~De~Pascale.
\newblock The {M}onge problem in {$\Bbb R^d$}.
\newblock {\em Duke Math. J.}, 157(3):551--572, 2011.

\bibitem[CEMS01]{CMS01}
D.~Cordero-Erausquin, R.J. McCann, and M.~Schmuckenschl{\"a}ger.
\newblock A {R}iemannian interpolation inequality {\`a} la {B}orell, {B}rascamp
  and {L}ieb.
\newblock {\em Invent. Math.}, 146(2):219--257, 2001.

\bibitem[CFM02]{CFM02}
L.~Caffarelli, M.~Feldman, and R.~McCann.
\newblock Constructing optimal maps for {M}onge's transport problem as a limit
  of strictly convex costs.
\newblock {\em J. Amer. Math. Soc.}, 15(1):1--26, 2002.

\bibitem[DM93]{DalMaso}
G.~Dal~Maso.
\newblock {\em An Introduction to $\Gamma$-Convergence}.
\newblock Progress in Nonlinear Differential Equations and Their Applications
  8. Birkh{\"a}user, 1993.

\bibitem[EG99]{EG99}
L.~C. Evans and W.~Gangbo.
\newblock Differential equations methods for the {M}onge-{K}antorovich mass
  transfer problem.
\newblock {\em Mem. Amer. Math. Soc.}, 137(653):viii+66, 1999.

\bibitem[EM12]{EM11}
M.~Erbar and J.~Maas.
\newblock Ricci curvature of finite markov chains via convexity of the entropy.
\newblock {\em Arch. Ration. Mech. Anal.}, 206(3):997--1038, 2012.

\bibitem[GRST]{GRST12}
N.~Gozlan, C.~Roberto, P.-M. Samson, and P.~Tetali.
\newblock Displacement convexity of entropy and related inequalities on graphs.
\newblock Published online August 2013, Probab. Theory Related Fields.

\bibitem[Hil]{Hil}
E.~Hillion.
\newblock Contraction of measures on graphs.
\newblock Preprint.

\bibitem[Hil10]{Hil10}
E.~Hillion.
\newblock {\em Analysis and geometry in metric-measure spaces :
  Borell-Brascamp-Lieb inequalities and Olkin-Shepp conjecture}.
\newblock PhD thesis, Toulouse U., 2010.

\bibitem[Hil12]{Hil12}
E.~Hillion.
\newblock Concavity of entropy along binomial convolutions.
\newblock {\em Electron. Commun. Probab.}, 17:no. 4, 9, 2012.

\bibitem[Joh07]{Joh07}
O.~Johnson.
\newblock Log-concavity and the maximum entropy property of the {P}oisson
  distribution.
\newblock {\em Stochastic Process. Appl.}, 117(6):791--802, 2007.

\bibitem[L{\'e}oa]{Leo12d}
C.~L{\'e}onard.
\newblock On the convexity of the entropy along entropic interpolations.
\newblock Preprint, arXiv:1310.1274.

\bibitem[L{\'e}ob]{Leo12b}
C.~L{\'e}onard.
\newblock Some properties of path measures.
\newblock Preprint, arXiv:1308.0217.

\bibitem[L{\'e}o12a]{Leo12a}
C.~L{\'e}onard.
\newblock From the {S}chr{\"o}dinger problem to the {M}onge-{K}antorovich
  problem.
\newblock {\em J. Funct. Anal.}, 262(1879-1920), 2012.

\bibitem[L{\'e}o12b]{Leo11a}
C.~L{\'e}onard.
\newblock Girsanov theory under a finite entropy condition.
\newblock In {\em S{\'e}minaire de probabilit{\'e}s de Strasbourg, vol. XLIV.},
  pages 429--465. Lecture Notes in Mathematics 2046. Springer, 2012.

\bibitem[L{\'e}o14]{Leo12e}
C.~L{\'e}onard.
\newblock A survey of the {S}chr{\"o}dinger problem and some of its connections
  with optimal transport.
\newblock {\em Discrete Contin. Dyn. Syst. A}, 34(4):1533--1574, 2014.

\bibitem[LRZ]{LRZ12}
C.~L{\'e}onard, S.~R{\oe}lly, and J.-C. Zambrini.
\newblock On the time symmetry of some stochastic processes.
\newblock Preprint, arXiv:1308.0576.

\bibitem[LV09]{LV09}
J.~Lott and C.~Villani.
\newblock Ricci curvature for metric-measure spaces via optimal transport.
\newblock {\em Ann. of Math.}, 169(3):903--991, 2009.

\bibitem[Maa11]{Maas11}
J.~Maas.
\newblock Gradient flows of the entropy for finite markov chains.
\newblock {\em J. Funct. Anal}, 261:2250--2292, 2011.

\bibitem[McC94]{McC94}
R.J. McCann.
\newblock {\em A convexity theory for interacting gases and equilibrium
  crystals}.
\newblock PhD thesis, Princeton Univ., 1994.

\bibitem[McC97]{McC97}
R.J. McCann.
\newblock A convexity principle for interacting gases.
\newblock {\em Adv. Math.}, 128:153--179, 1997.

\bibitem[Mie]{Mie13}
A.~Mielke.
\newblock Geodesic convexity of the relative entropy in reversible markov
  chains.
\newblock To appear. DOI: 10.1007/s00526-012-0538-8.

\bibitem[Mie11]{Mie11}
A.~Mielke.
\newblock A gradient structure for reaction-diffusion systems and for
  energy-drift-diffusion systems.
\newblock {\em Nonlinearity}, 24(4):1329--1346, 2011.

\bibitem[Nag93]{Naga93}
M.~Nagasawa.
\newblock {\em Schr\"odinger equations and diffusion theory}, volume~86 of {\em
  Monographs in Mathematics}.
\newblock Birkh\"auser Verlag, Basel, 1993.

\bibitem[OV00]{OV00}
F.~Otto and C.~Villani.
\newblock Generalization of an inequality by {T}alagrand and links with the
  logarithmic {S}obolev inequality.
\newblock {\em J. Funct. Anal.}, 173(2):361--400, 2000.

\bibitem[OV12]{OV12}
Y.~Ollivier and C.~Villani.
\newblock A curved {B}runn--{M}inkowski inequality on the discrete hypercube,
  {O}r: {W}hat is the {R}icci curvature of the discrete hypercube?
\newblock {\em SIAM J. Discrete Math.}, 26(3):983--996, 2012.

\bibitem[Sch31]{Sch31}
E.~Schr\"odinger.
\newblock {\"U}ber die {U}mkehrung der {N}aturgesetze.
\newblock {\em Sitzungsberichte Preuss. Akad. Wiss. Berlin. Phys. Math.},
  144:144--153, 1931.

\bibitem[Sch32]{Sch32}
E.~Schr{\"o}dinger.
\newblock Sur la th{\'e}orie relativiste de l'{\'e}lectron et
  l'interpr{\'e}tation de la m{\'e}canique quantique.
\newblock {\em Ann. Inst. H. Poincar\'e}, 2:269--310, 1932.
\newblock \\ Available at: \texttt{http://archive.numdam.org/ARCHIVE/AIHP/}.

\bibitem[Stu06a]{St06a}
K-T. Sturm.
\newblock On the geometry of metric measure spaces, {I}.
\newblock {\em Acta Math}, 196:65--131, 2006.

\bibitem[Stu06b]{St06b}
K-T. Sturm.
\newblock On the geometry of metric measure spaces, {II}.
\newblock {\em Acta Math}, 196:133--177, 2006.

\bibitem[Sud79]{Su79}
V.~N. Sudakov.
\newblock Geometric problems in the theory of infinite-dimensional probability
  distributions.
\newblock {\em Proc. Steklov Inst. Math.}, (2):i--v, 1--178, 1979.
\newblock Cover to cover translation of Trudy Mat. Inst. Steklov {{\bf{1}}41}
  (1976).

\bibitem[Sv05]{SvR05}
K-T. Sturm and M-K. {von Renesse}.
\newblock Transport inequalities, gradient estimates, entropy, and {R}icci
  curvature.
\newblock {\em Comm. Pure Appl. Math.}, 58(7):923--940, 2005.

\bibitem[Vil09]{Vill09}
C.~Villani.
\newblock {\em Optimal Transport. Old and New}, volume 338 of {\em Grundlehren
  der mathematischen Wissenschaften}.
\newblock Springer, 2009.

\end{thebibliography}

\end{document}